
\documentclass[11pt,reqno]{amsart}
\usepackage{amsfonts,amsmath,amssymb,amsbsy,amstext,amsthm, palatino, euler,
accents,color,enumerate,float,verbatim, mathbbol, bbm,
graphicx,
epstopdf,
mathtools,subcaption,url,microtype}
\usepackage[margin=1in,headheight=13.6pt]{geometry}
\linespread{1.11}
\usepackage{comment}
\usepackage[shortlabels]{enumitem}
\usepackage[ruled,vlined]{algorithm2e}

\usepackage[pdfencoding=auto, psdextra]{hyperref}
\usepackage[alphabetic,lite,backrefs]{amsrefs}

\usepackage{tikz,tikz-cd}
\usetikzlibrary{positioning, matrix, shapes}
\usetikzlibrary{decorations.pathmorphing}

\graphicspath{{ALCO/}}

\makeatletter
\@namedef{subjclassname@2020}{%
  \textup{2020} Mathematics Subject Classification}
\makeatother


\newtheorem{lemma}{Lemma}[section]
\newtheorem{theorem}[lemma]{Theorem}

\newtheorem{prop}[lemma]{Proposition}
\newtheorem{cor}[lemma]{Corollary}

\newtheorem{question}[lemma]{Question}

\newtheorem{thm}{Theorem}
\renewcommand*{\thethm}{\Alph{thm}}

\theoremstyle{remark}
\newtheorem{remark}[lemma]{Remark}

\theoremstyle{definition}
\newtheorem{definition}[lemma]{Definition}
\newtheorem{example}[lemma]{Example}

\newtheorem{notation}[lemma]{Notation}

\newcommand{\one}{\mathbb{1}}

\newcommand{\depth}{\operatorname{depth}}
\newcommand{\reg}{\operatorname{reg}}

\newcommand{\inn}{\operatorname{in}}

\newcommand{\pdim}{\operatorname{pdim}}

\newcommand{\Supp}{\operatorname{Supp}}

\newcommand{\conv}{\operatorname{conv}}


\newcommand{\rank}{\operatorname{rank}}


\newcommand{\supp}{\operatorname{supp}}

\newcommand{\tconv}{\operatorname{tconv}}
\newcommand{\mat}{\operatorname{mat}}




\newcommand{\kk}{\mathbb k}  
\renewcommand{\aa}{\mathbf a}

\newcommand{\ee}{\mathbf e}
\newcommand{\vv}{\mathbf v}

\newcommand{\pp}{\mathbf p}
\newcommand{\qq}{\mathbf q}

\newcommand{\uu}{\mathbf u}
\renewcommand{\vv}{\mathbf v}
\newcommand{\ww}{\mathbf w}
\newcommand{\xx}{\mathbf x}

\newcommand{\bft}{\mathbf t}

\newcommand{\cA}{\mathcal{A}}
\newcommand{\cB}{\mathcal{B}}
\newcommand{\cC}{\mathcal{C}}

\newcommand{\cG}{\mathcal{G}}
\newcommand{\cH}{\mathcal{H}}

\newcommand{\cM}{\mathcal{M}}

\newcommand{\cP}{\mathcal{P}}

\newcommand{\cS}{\mathcal{S}}
\newcommand{\cT}{\mathcal{T}}


\newcommand{\NN}{\mathbb{N}}
\newcommand{\PP}{\mathbb{P}}

\newcommand{\RR}{\mathbb{R}}

\newcommand{\TT}{\mathbb{T}}

\newcommand{\ZZ}{\mathbb{Z}}

\renewcommand{\P}{\mathcal{P}}
\renewcommand{\b}{\bf{b}}
\renewcommand{\a}{\bf{a}}
\newcommand{\res}{\mathcal{F}}



\newcommand{\m}{\mathfrak{m}}

%

\newcommand{\Tmin}{\TT_{\min}} 
\newcommand{\Tmax}{\TT_{\max}} 
\newcommand{\troptorus}[1]{\RR^{#1}/\RR\one} 

\newcommand{\bddgraph}[2]{R(#1;#2)}

\DeclareMathOperator{\RD}{RD}
\DeclareMathOperator{\LD}{LD}
\DeclareMathOperator{\vol}{Vol}
\DeclareMathOperator{\rec}{rec}

\newcommand{\anton}[1]{{\color{blue} \sf ANTON: [#1]}}
\newcommand{\ben}[1]{{\color{red} \sf BEN: [#1]}}

\begin{document}

\title[Root polytopes, tropical types, and toric edge ideals]{Root polytopes, tropical types, and toric edge ideals}

\author{Ayah Almousa}
\address{University of South Carolina}
\email{aalmousa@sc.edu}
\urladdr{\url{https://sites.google.com/view/ayah-almousa}}

\author{Anton Dochtermann}
\address{Texas State University}
\email{dochtermann@txstate.edu}
\urladdr{\url{https://dochtermann.wp.txstate.edu/}}

\author{Ben Smith}
\address{Lancaster University}
\email{b.smith9@lancaster.ac.uk}
\urladdr{\url{https://sites.google.com/view/bsmithmathematics/}}

 \subjclass[2020]{Primary: 14T90, 13F65, 13F55, 13D02; Secondary: 52B05, 05E40}


\date{\today}
\begin{abstract}
We consider arrangements of tropical hyperplanes where the apices of the hyperplanes are taken to infinity in certain directions. 
Such an arrangement defines a decomposition of Euclidean space where a cell is determined by its `type' data, analogous to the covectors of an oriented matroid. 
By work of Develin-Sturmfels and Fink-Rinc\'{o}n, these `tropical complexes' are dual to regular subdivisions of root polytopes, which in turn are in bijection with mixed subdivisions of certain generalized permutohedra.  
Extending previous work with Joswig-Sanyal, we show how a natural monomial labeling of these complexes describes polynomial relations (syzygies) among `type ideals' which arise naturally from the combinatorial data of the arrangement. 
In particular, we show that the cotype ideal is Alexander dual to a corresponding initial ideal of the lattice ideal of the underlying root polytope.
This leads to novel ways of studying algebraic properties of various monomial and toric ideals, as well as relating them to combinatorial and geometric properties.
In particular, our methods of studying the dimension of the tropical complex leads to new formulas for homological invariants of toric edge rings of bipartite graphs, which have been extensively studied in the commutative algebra community. 
\end{abstract}
\maketitle

\section{Introduction}\label{sec:intro}

The study of \emph{tropical convexity} has its origins in optimization and related fields under the guise of max-plus linear algebra; see~\cites{butkovic, cohen+gaubert+quadrat, litvinov+maslov+shpiz}.
A geometric approach was initiated by Develin and Sturmfels in \cite{develin2004tropical}, where the notion of a \emph{tropical polytope} was defined as the tropical convex hull of a finite set of points in the tropical torus $\troptorus{d}$.
A tropical polytope can also be defined via an \emph{arrangement of tropical hyperplanes} in $\troptorus{d}$.  Such an arrangement leads to a decomposition of $\troptorus{d}$ called the \emph{tropical complex}, and the subcomplex of bounded cells recovers the tropical polytope. In \cite{develin2004tropical}, it was shown that the combinatorial types of tropical complexes arising from $n$ tropical hyperplanes in $\troptorus{d}$ are in bijection with the set of regular subdivisions of a product of simplices $\Delta_{n-1} \times \Delta_{d-1}$. Via the \emph{Cayley trick} these are in turn encoded by regular \emph{mixed subdivisions} of the dilated simplex $n \Delta_{d-1}$.


An arrangement of tropical hyperplanes gives rise to  combinatorial information captured by the \emph{type} data of cells in the tropical complex, a tropical analogue of the covector of an oriented matroid. 
In \cite{dochtermann2012tropical}, the second author, Sanyal, and Joswig studied monomial ideals defined by fine and coarse type data and showed that various complexes arising from the tropical complex support minimal (co)cellular resolutions, leading to a number of combinatorial and algebraic applications. 
These results extend work of Block and Yu \cite{block2006tropical}, who first exploited the connection between tropical polytopes and cellular resolutions. The ideas go further back to a paper of Novik, Postnikov, and Sturmfels \cite{NPS} who studied ideals arising from (classical) oriented matroids.
In \cite{block2006tropical} the authors also establish a connection between type ideals and Gr\"obner bases of initial ideals of toric ideals, based on results developed by Sturmfels in \cite{sturmfels1996grobner}.  In particular, it is shown that the fine cotype ideal of a tropical hyperplane arrangement is Alexander dual to a certain initial ideal of the ideal of maximal minors of a $2 \times n$ matrix of indeterminates, where the term order is determined by the underlying arrangement.   

\subsection{Generalizations to root polytopes}
 A natural generalization of the product of simplices is given by the root polytopes introduced by Postnikov in \cite{postnikov2009permutohedra}.  For a finite graph $G$ on vertex set $[n]$, the \emph{root polytope} $Q_G \subset {\mathbb R}^n$ is the convex hull of ${\bf e}_i -{\bf e}_j$ for all edges $(i,j) \in E(G)$ with $i < j$.  We will mostly be interested in root polytopes determined by \emph{bipartite graphs} $B$. The root polytope $Q_B$ associated to the complete bipartite graph $K_{d,n}$ is a product of simplices $\Delta_{d-1} \times \Delta_{n-1}$, and for general $B \subseteq K_{d,n}$ the root polytope $Q_B$ is the convex hull of a subset of the vertices of $\Delta_{d-1} \times \Delta_{n-1}$.  
 It follows that $Q_B$ inherits unimodularity from the product of simplices.
 
A bipartite graph $B \subset K_{d,n}$ also gives rise to a polytope $P_B$ obtained as a Minkowski sum of simplices, an example of a \emph{generalized permutohedron} \cite{postnikov2009permutohedra}. The Cayley embedding of the corresponding simplices naturally recovers the root polytope $Q_B$, and in particular the fine mixed subdivisions of $P_B$ are in bijection with the triangulations of $Q_B$. For these (and other) reasons, the study of triangulations of $Q_B$ is an active area of research, and for instance an axiomatic approach is introduced in \cite{galashin2018trianguloids}.
In \cite{fink2015stiefel}, Fink and Rinc\'on show how the combinatorics of triangulations of root polytopes can be encoded in a generalized version of tropical hyperplane arrangements, where one is allowed infinite coordinates in the defining functionals.  The finite entries in such an arrangement define a bipartite graph $B$, and in \cite{fink2015stiefel} it is shown how the combinatorics of the resulting tropical complex are described by the corresponding regular subdivision of the root polytope $Q_B$. This theory was also worked out from a more combinatorial perspective by Joswig and Loho in \cite{JoswigLoho:2016}. Once again the faces in the tropical complex are encoded by tropical covectors which give rise to fine/coarse types and associated monomial ideals. 

\subsection{Our contributions}

In this paper we apply methods from \cite{dochtermann2012tropical} to the more general setting of root polytopes and find surprising connections to other well-studied objects.  We consider the fine and coarse (co)type ideals of a generalized tropical hyperplane arrangement and relate them to well-known algebraic objects, including determinantal ideals and toric edge ideals.  Using this `tropical perspective' we establish new results and provide novel and more compact proofs of several known theorems from the literature.

In what follows, we fix $A$ to be a $d \times n$ matrix with entries in ${\mathbb R} \cup \{\infty\}$ (which we will hereby refer to as a \textit{tropical matrix}), and let $\cH = \cH(A)$  denote the corresponding arrangement of $n$ tropical hyperplanes in $\troptorus{d}$.  The finite entries of $A$ define a bipartite graph $B_A \subseteq K_{d,n}$. The values of these entries define a lifting of the vertices of the polytope $Q_{B_A}$, which in turn induces a regular subdivision ${\mathcal S}(A)$ of $Q_{B_A}$. We say that $A$ is \emph{sufficiently generic} if ${\mathcal S}(A)$ is a triangulation. The arrangement $\cH(A)$ also defines a subdivision $\cC(A)$ of  $\troptorus{d}$ called the \emph{tropical complex}, and we let $\cB(A)$ denote the subcomplex consisting of the bounded cells of $\cC(A)$.  We refer to Example \ref{ex: running} and Figure \ref{fig: tropHAex} for illustrations of these constructions.

Our first result is a combinatorial formula for the dimension of $\cB(A)$, which will be pivotal for our algebraic applications.  For a bipartite graph $B$, denote by $\lambda(B)$ the \emph{recession connectivity} of $B$, defined in terms of subgraphs of $B$ that induce strongly connected directed graph structures (see Definition \ref{def: recession}).  Extending earlier work on tropical convexity, we establish the following. 

\begin{thm}[Proposition \ref{prop: bounded+complex+dimension}, Corollary \ref{cor: bounded+complex+equality}] \label{thm: dim}
Let $B \subset K_{d,n}$ be a connected bipartite graph. Then for any $d \times n$ tropical matrix $A$ satisfying $B = B_A$ we have  
\[\dim(\cB(A)) \leq \lambda(B_A) - 1.\]
Furthermore, if $A$ is sufficiently generic then we have equality in the above expression.
\end{thm}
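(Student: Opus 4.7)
The plan is to give a tight combinatorial description of each cell in $\cB(A)$ in terms of a bipartite subgraph of $B_A$, read off both its dimension and its boundedness from this graph, and then identify the resulting maximum with $\lambda(B_A) - 1$.

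I would begin by recalling from \cite{fink2015stiefel, JoswigLoho:2016} that each cell $\sigma$ of $\cC(A)$ is indexed by its tropical covector/type $(T_1,\ldots,T_n)$, which I encode as a bipartite subgraph $B_\sigma \subseteq B_A$ whose edges record the incidences $i \in T_j$. Two dictionaries then need to be established. First, a graph-theoretic formula expressing $\dim(\sigma)$ in terms of the connected components of $B_\sigma$, extending the classical formula for the product-of-simplices case $B_A = K_{d,n}$ from \cite{develin2004tropical}. Second, the characterization that $\sigma$ is bounded if and only if $B_\sigma$ admits an orientation making it strongly connected in exactly the sense used in Definition \ref{def: recession} to define $\lambda$. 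Both extend known results from the classical tropical hyperplane setting, but must be reproved here under the constraint that covector edges lie in $B_A$.

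Combining the two dictionaries yields the upper bound immediately: for every bounded cell $\sigma$, the subgraph $B_\sigma$ is a valid witness for $\lambda(B_A)$, so $\dim(\sigma) \leq \lambda(B_A) - 1$, and taking the supremum over bounded cells gives $\dim(\cB(A)) \leq \lambda(B_A) - 1$. For the equality statement I would fix a subgraph $B^\star \subseteq B_A$ realizing the maximum in the definition of $\lambda(B_A)$, and then use the Develin--Sturmfels--Fink--Rinc\'{o}n duality between bounded cells of $\cC(A)$ and interior faces of $\mathcal{S}(A)$ to construct a bounded cell whose associated subgraph equals (or refines to) $B^\star$. The sufficient-genericity assumption, namely that $\mathcal{S}(A)$ is a triangulation, is what guarantees that every combinatorial subgraph satisfying the strong-connectivity condition is realized as the covector of some cell of $\cC(A)$.

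The main obstacle will be the careful verification of the two dictionaries in the generalized setting. In particular the dimension formula is delicate: in the classical Develin--Sturmfels case it is symmetric in the two sides of the bipartition, but when some entries of $A$ are $\infty$ the roles of $[d]$ and $[n]$ become asymmetric, and I expect the correct count to involve only the connected components of $B_\sigma$ meeting one distinguished side of the bipartition. Once the two dictionaries are in place the remainder is essentially a bookkeeping exercise combined with a routine application of the regular-subdivision/tropical-complex duality.
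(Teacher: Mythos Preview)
Your plan for the inequality $\dim(\cB(A)) \leq \lambda(B_A) - 1$ is correct and matches the paper: the two dictionaries you describe are precisely Lemma~\ref{lem:type+facts}(3) (the dimension of $C_T$ is $c(B_T)-1$, counting all components including isolated left vertices, so your asymmetry worry is unfounded) and Proposition~\ref{prop:bounded+graph} (boundedness is equivalent to $\bddgraph{T}{A}$ being strongly connected). Note that the boundedness criterion involves the directed graph on all of $B_A$, not just $B_\sigma$; you say ``$B_\sigma$ admits an orientation making it strongly connected'', which is slightly off.

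Your argument for the equality direction contains a genuine gap. The claim that ``the sufficient-genericity assumption \ldots\ guarantees that every combinatorial subgraph satisfying the strong-connectivity condition is realized as the covector of some cell of $\cC(A)$'' is false: different triangulations of $Q_B$ give different tropical complexes whose sets of covectors are genuinely different, and a fixed subgraph $B^\star$ realizing $\lambda(B)$ need not appear as a covector in a given triangulation. So your construction produces nothing for an arbitrary sufficiently generic $A$.

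The paper's route is quite different and goes through algebra. In Proposition~\ref{prop: bounded+complex+dimension} it constructs \emph{one particular} sufficiently generic $A$ achieving equality: take a forest $S$ realizing $\lambda(B)$, set $a_{ij}=0$ on $S$, large generic on $B\setminus S$, and $\infty$ off $B$; then $Q_S$ is visibly a cell of $\cS(A)$ and dualizes to a bounded cell of the correct dimension. To pass from this single $A$ to \emph{every} sufficiently generic $A$ (Corollary~\ref{cor: bounded+complex+equality}), the paper uses Theorem~\ref{thm: tropicalCplxRegularity}: for any sufficiently generic $A$ one has $\dim(\cB(A)) = \reg(\kk[B])$, and the right-hand side depends only on $B$, not on $A$. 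This algebraic invariance is the missing ingredient that your direct combinatorial approach lacks.
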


\subsubsection{Resolutions of type ideals}
We next turn to algebraic applications. For a tropical hyperplane arrangement $\cH = \cH(A)$, the cells of the tropical complex $\cC(A)$ (and hence of $\cB(A)$) have a natural monomial labeling given by their fine/coarse type and cotype. These in turn define the fine/coarse type and cotype ideals; see Section \ref{sec:background}. Our next collection of results describes how homological invariants of these ideals are encoded in the facial structure of the associated tropical complexes. In particular, by employing methods from \cite{dochtermann2012tropical}, we establish the following.

\begin{thm}[Propositions \ref{prop: typesLabelCells}, \ref{prop: cocellularRes}, \ref{prop: cotypeCellRes}] \label{thm: resolutions}
Let $\cH = \cH(A)$ be a sufficiently generic arrangement of tropical hyperplanes in $\troptorus{d}$. Then with the monomial labels given by fine type (respectively, coarse type), the complex $\cC(A)$ supports a minimal cocellular resolution of the fine (resp. coarse) type ideal. Similarly, the labeled complex $\cB(A)$ supports a minimal cellular resolution of the fine (resp. coarse) cotype ideal.
\end{thm}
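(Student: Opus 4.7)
The plan is to follow the standard Bayer--Sturmfels / Miller--Sturmfels framework for (co)cellular resolutions, adapting the arguments of \cite{dochtermann2012tropical} from classical tropical hyperplane arrangements to the generalized setting allowing infinite entries. Recall that a labeled polyhedral complex $X$ supports a minimal cellular resolution of the monomial ideal generated by its vertex labels provided (i) for every multidegree $\sigma$, the subcomplex $X_{\leq \sigma}$ of cells whose label divides $x^\sigma$ is empty or acyclic over $\kk$, and (ii) the label strictly changes across every proper face inclusion. The cocellular version is dual: one verifies acyclicity of the upper subcomplexes $X_{\geq \sigma}$.

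My first step would be to fix the four labelings and verify the monotonicity condition (ii). By the work of Fink--Rinc\'on \cite{fink2015stiefel} and Joswig--Loho \cite{JoswigLoho:2016}, the cells of $\cC(A)$ are in bijection with the faces of the regular subdivision $\cS(A)$ of $Q_{B_A}$, with fine type recording, for each column $j$, the set of rows achieving the tropical minimum of the $j$-th linear form on the cell. Passing from a cell to a larger-dimensional coface strictly refines the fine type (and hence the coarse type), giving (ii) for the type ideals; the complementary statement for the cotype ideals on $\cB(A)$ then follows since taking cotypes reverses inclusions.

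Next I would verify condition (i). For the cocellular resolutions of the type ideals supported by $\cC(A)$: for any multidegree $\sigma$, the locus of points in $\troptorus{d}$ whose fine (respectively coarse) type contains $\sigma$ is an intersection of closed tropical half-spaces, hence tropically convex. The subcomplex $\cC(A)_{\geq \sigma}$ equals the polyhedral subdivision of this convex region induced by the arrangement, so it is contractible. For the cellular resolutions on $\cB(A)$ with cotype labels: the cells whose cotype divides $\sigma$ are exactly those avoiding certain prescribed sectors of the hyperplanes, and their union intersected with $\cB(A)$ is a bounded tropically convex region. In both cases acyclicity reduces to a tropical convexity statement that can be proved by a straight-line tropical deformation retraction, exactly as in \cite{dochtermann2012tropical}.

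The main obstacle is accommodating the infinite entries: in the classical case every hyperplane has full support, but here degenerate sectors must be handled so as not to spoil convexity of the intervening loci or introduce spurious generators. I would address this by phrasing types and cotypes as monomials in the variables indexed by edges of $B_A$ (rather than all edges of $K_{d,n}$), so that absent $(i,j)$ are simply omitted from the multigrading. Under this convention, the tropical-convex-region argument carries over verbatim, since the missing hyperplane sectors correspond to trivially satisfied conditions. Once (i) and (ii) are established, the theorem follows from the Bayer--Sturmfels criterion together with its cocellular counterpart; minimality in each case is immediate from the strict monotonicity observed in step two.
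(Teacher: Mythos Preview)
Your outline follows the right general framework, but there are two genuine gaps.

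First, the cocellular acyclicity criterion is misstated. In the Miller--Sturmfels setup (Lemma~\ref{lem:cocellular_criteria}) the condition for a \emph{colabeled} complex still concerns the down-sets $\{p : T_\cH(p) \leq U\}$, not the up-sets. Your set $\{p : T_\cH(p) \geq \sigma\}$ is just the closed cell $C_\sigma$, a single polyhedron, so its contractibility is trivial and establishes nothing. What is actually needed is that $(\cC(A), T)_{\leq U}$ is contractible, and this requires the max-tropical convexity argument of Lemma~\ref{lem: maxTropConvex} and Theorem~\ref{thm: contractible}: for $r$ on the max-tropical segment between $p$ and $q$ one shows $T_\cH(r) \leq \max(T_\cH(p), T_\cH(q))$.

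Second, and more seriously, your claim that the cotype down-set intersected with $\cB(A)$ is a ``bounded tropically convex region'' is exactly where the argument from \cite{dochtermann2012tropical} fails to carry over. In the full-support case $\cB(A) = \tconv(A)$ is min-tropically convex, so its intersection with the min-tropically convex set $(\cC(A), \overline{T})_{\leq U}$ is again min-tropically convex. But when $A$ has infinite entries, $\cB(A)$ is only a strict subcomplex of $\tconv(A)$, and whether $\cB(A)$ is itself min-tropically convex is left as an open question in the paper. Proposition~\ref{prop: bounded+contractible} circumvents this: one writes $(\cC(A), \overline{T})_{\leq U}$ as $\tconv(V)$ for a finite $V$, realizes it as a projection of the envelope polyhedron $\mathcal{E}(V) \subset \RR^d \times \RR^{|V|}$, and then invokes the fact that the bounded subcomplex of any pointed polyhedron is contractible via a projective compactification. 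Your assertion that ``the tropical-convex-region argument carries over verbatim'' is precisely the step that does not survive the passage to arrangements without full support.
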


Recall that the finite entries of the matrix $A$ define a bipartite graph $B = B_A \subset K_{d,n}$.  We let $P_B$ denote the corresponding generalized permutohedron (sum of simplices), and note that the coarse type ideal is generated by monomials corresponding to the lattice points of $P_B$. An application of the Cayley trick therefore implies that $P_B$ supports a minimal cellular resolution of the ideal generated by its lattice points; see Corollaries \ref{cor: cellResGenPerm} and \ref{cor: coarsemonomials}. 
We refer to Figure \ref{fig: mixedSubdiv} for an illustration.

Our results involving cellular resolutions of type ideals allow us to interpret volumes of generalized permutohedra in terms of algebraic data. For instance, we compute the volume of $P_B$
as well as other combinatorial data including \emph{$G$-draconian sequences}, 
in terms of a computation in initial ideals (see Proposition \ref{prop: volumeGenPerm}). If $\cH = \cH(A)$ is a \textit{graphic} tropical hyperplane arrangement, i.e., each column in the tropical matrix $A$ has exactly two finite entries (see Definition \ref{def: graphic}), the associated polytope $P_B$ is a graphical \emph{zonotope}, and its volume is given by the dimension of the largest syzygy of the underlying type ideal (see Corollary \ref{cor: zonotope}).  

The entries of the tropical matrix $A$ naturally define a term order on the polynomial ring whose variables are given by the vertices of the associated root polytope $Q_B$. This in turn defines an initial ideal $\inn_A(L)$ of the toric ideal $L$ defined by $Q_B$. Extending results of \cite{block2006tropical} and \cite{dochtermann2012tropical} we relate this ideal to the cotype ideal associated to the arrangement ${\cH}(A)$.

\begin{thm}[Proposition \ref{prop: cotypeIdealAD}] \label{thm: Alexdual} Let $\cH = \cH(A)$ be any arrangement of tropical hyperplanes, and let $L$ be the lattice ideal of the associated root polytope $Q_{B_A}$. Then the fine cotype ideal associated to $\cH$ is the Alexander dual of $M(\inn_A(L))$, where $M(\inn_A(L))$ is the largest monomial ideal contained in $\inn_A(L)$.
\end{thm}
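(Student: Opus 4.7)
The plan is to extend the argument of Block and Yu \cite{block2006tropical} from the product-of-simplices setting (where $B_A = K_{d,n}$ and $L$ is the ideal of $2 \times 2$ minors) to the general root polytope $Q_{B_A}$. The two key ingredients are Sturmfels' correspondence \cite{sturmfels1996grobner} between initial ideals of $L$ and regular subdivisions of the point configuration $Q_{B_A}$, and the duality of Fink and Rinc\'on \cite{fink2015stiefel} identifying bounded cells of $\cB(A)$ with interior faces of the subdivision $\mathcal{S}(A)$.

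First I would treat the case when $A$ is sufficiently generic, so that $\mathcal{S}(A)$ is a unimodular triangulation and $\inn_A(L) = M(\inn_A(L))$ is already a squarefree monomial ideal. By Sturmfels' theorem together with the unimodularity of $Q_{B_A}$, this ideal coincides with the Stanley--Reisner ideal of $\mathcal{S}(A)$, with irreducible decomposition
\[ M(\inn_A(L)) \;=\; \bigcap_\sigma \mathfrak{m}_\sigma, \qquad \mathfrak{m}_\sigma = (x_e : e \text{ a vertex of } Q_{B_A},\ e \notin \sigma), \]
where $\sigma$ ranges over the top-dimensional simplices of $\mathcal{S}(A)$. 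By Theorem~\ref{thm: resolutions}, the minimal generators of the fine cotype ideal are indexed by the vertices of $\cB(A)$, which under the Fink--Rinc\'on duality correspond to exactly the top-dimensional simplices of $\mathcal{S}(A)$. I would then verify that the fine cotype monomial of such a vertex is supported precisely on the vertices of $Q_{B_A}$ not appearing in the corresponding $\sigma$, giving the squarefree Alexander duality between minimal generators and irreducible components.

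To pass to arbitrary $A$, I would use a perturbation argument: choose a generic $A'$ with $B_{A'} = B_A$ so that the generic case applies to $A'$, and then degenerate $A' \to A$. Under this degeneration, $\mathcal{S}(A')$ coarsens to $\mathcal{S}(A)$, and vertices of $\cB(A')$ that collapse onto a common vertex of $\cB(A)$ contribute a ``merged'' cotype; on the algebraic side, generators of $\inn_{A'}(L)$ merge and $M(\inn_A(L))$ is read off as the monomial part of the resulting limit initial ideal. The compatibility of Alexander duality with this merging follows by passing from the squarefree setting to the appropriate multigraded Alexander dual.

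The main obstacle is the bookkeeping of multidegrees and missing variables. The fine cotype of a bounded vertex is generally not squarefree for non-generic $A$, so one must carefully identify the exponent vector with respect to which the Alexander dual is taken and verify that it matches the exponents of the merged irreducible components of $M(\inn_A(L))$. A subtlety specific to the root polytope setting (absent from \cite{block2006tropical}) is that variables $x_{ij}$ corresponding to non-edges of $B_A$, i.e., to $\infty$-entries of $A$, appear neither in $L$ nor as active coordinates of the cotype; I would embed both sides of the duality into the full polynomial ring $\kk[x_{ij} : (i,j) \in [d] \times [n]]$ and verify that these phantom variables contribute trivially on each side.
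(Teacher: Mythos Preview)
Your generic case is essentially the paper's argument: when $\cS(A)$ is a triangulation, the Crosscut complex $\mathrm{Crosscut}(\cS(A))$ equals $\cS(A)$ itself, and the Stanley--Reisner/Sturmfels correspondence together with the Fink--Rinc\'on duality gives exactly the identification you describe.

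Your non-generic case, however, contains a genuine error and is far more complicated than what is needed. You write that ``the fine cotype of a bounded vertex is generally not squarefree for non-generic $A$,'' but this is false: by Definition~\ref{def: fineType} the fine cotype $\overline{T}_\cH(\pp)$ lies in $\{0,1\}^{d\times n}$ for \emph{every} $\pp$, so the fine cotype ideal is always squarefree regardless of genericity. (You may be thinking of the coarse cotype, which lives in $\NN^d$.) Consequently the entire perturbation/degeneration program, with its attendant worry about multigraded Alexander duality for non-squarefree exponents, is unnecessary. The paper instead handles the non-generic case directly and uniformly: it takes $J$ to be the Stanley--Reisner ideal of the \emph{Crosscut complex} $\mathrm{Crosscut}(\cS(A))$ (Definition~\ref{def: crosscut}), observes that $J$ is the Alexander dual of the fine cotype ideal, and then uses unimodularity of $Q_{B_A}$ to see that $M(\inn_A(L))$ is radical, so $M(\inn_A(L)) = \mathrm{Rad}(M(\inn_A(L))) = J$ by the argument of \cite{dochtermann2012tropical}*{Lemma 6.7}. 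No limiting process is needed.

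Your concern about phantom variables $x_{ij}$ for $(i,j)\notin\supp(A)$ is also misplaced: both the fine cotype ideal and $L$ (hence $\inn_A(L)$ and $M(\inn_A(L))$) live in $\widetilde{S} = \kk[x_{ij} : (i,j)\in\supp(A)]$ from the outset (Notation~\ref{not: cellRings}), so there is nothing to embed and no phantom variables to track.
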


In the case that every entry of $A$ is finite (so that each hyperplane in ${\mathcal H}$ has full support) we have that $B = K_{d,n}$ is the complete bipartite graph.  In this case we see that $Q_B$ is a product of simplices $\Delta_{d-1} \times \Delta_{n-1}$, $P_B = n \Delta_{d-1}$ is a dilated simplex, and the toric ideal $L$ is generated by the $2$-minors of an $d \times n$ matrix of indeterminates. 

\subsubsection{Toric edge rings}
Our primary application of type ideals will be their connection to \emph{toric edge rings}.  Fix a field $\kk$, and let $G$ be a finite graph $G$ on vertex set $V$ and edge set $E$. The \emph{edge ring} $\kk[G]$ is the $\kk$-algebra generated by the monomials $x_ix_j$ where $\{i,j\} \in E$. Define a monomial map $\pi:\kk[E] \rightarrow \kk[V]$ by $e_j \mapsto v_{j_1}v_{j_2}$, where $e_j = \{v_{j_1}, v_{j_2}\}$. The \emph{toric edge ideal} of $G$ is $J_G = \ker \pi$, so that 
\[\kk[G] \cong \kk[E]/J_G.\] 
\noindent
Homological properties of toric edge rings have been intensively studied in recent years; see Section \ref{sec: lattice} for references.  Such ideals are homogeneous and are known to be generated by binomials corresponding to even closed walks in $G$ (see \cite{villarreal1995rees}). 
We apply our results regarding type ideals to provide new insights into homological properties of toric edge ideals of bipartite graphs. For instance, because type ideals always have linear resolutions, we give a short proof that toric edge rings of bipartite graphs are Cohen-Macaulay, see Proposition \ref{prop: CM}. Alexander duality also provides us with the following geometric interpretation of the regularity of toric edge rings.

\begin{thm}[Theorem \ref{thm: tropicalCplxRegularity}, Corollary \ref{cor: krulldim}] \label{thm: regdim}
Suppose $B \subset K_{d,n}$ is a connected bipartite graph. Let $A$ be a sufficiently generic tropical matrix satisfying $B = B_A$, and let $\cB(A)$ denote the bounded subcomplex of the arrangement  ${\mathcal H}(A)$. Then the regularity and Krull dimension of the toric edge ring ${\kk}[B]$ are given by
\begin{equation}
\begin{split}
\reg({\kk}[B]) &= \dim(\cB(A)); \\
\dim({\kk}[B]) &= d+n-1.
\end{split}
\end{equation}
\end{thm}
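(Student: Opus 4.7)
The plan is to treat the regularity and Krull dimension claims separately, with Lemma~\ref{lem: bipartite} providing the crucial bridge $J_B \cong L$ (where $L$ is the lattice ideal of $Q_B$) for both. For the Krull dimension, $\kk[B]$ is the semigroup algebra of $Q_B$, so $\dim \kk[B] = \dim Q_B + 1$. The vertices ${\bf e}_i - {\bf e}_j$ of $Q_B$ satisfy two independent linear relations (the coordinates sum to $1$ on $[d]$ and to $-1$ on $\{d+1,\dots,d+n\}$), giving $\dim Q_B \leq d+n-2$. Conversely, any spanning tree of the connected graph $B$ produces $d+n-1$ vertices of $Q_B$ that are linearly independent, as one sees by peeling off leaves; this matches the upper bound and yields $\dim \kk[B] = d+n-1$.

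For the regularity statement, I would chain together the main results already in place. First, Lemma~\ref{lem: bipartite} gives $\reg(\kk[B]) = \reg(S/L)$. Second, sufficient genericity of $A$ makes ${\mathcal S}(A)$ a unimodular triangulation of $Q_B$ (unimodularity being inherited from $\Delta_{n-1}\times\Delta_{d-1}$), and under the Gr\"obner--triangulation correspondence $\inn_A(L)$ is then the squarefree Stanley--Reisner ideal of the non-faces of ${\mathcal S}(A)$. The theorem of Conca--Varbaro on preservation of regularity under degeneration to a squarefree initial ideal then gives $\reg(S/L) = \reg(S/\inn_A(L))$. Third, squarefreeness forces $M(\inn_A(L)) = \inn_A(L)$, so Theorem~\ref{thm: Alexdual} identifies the fine cotype ideal $C$ with $(\inn_A(L))^\vee$. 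Fourth, Theorem~\ref{thm: resolutions} provides a minimal cellular resolution of $C$ supported on $\cB(A)$, whence $\operatorname{pd}(S/C) = \dim(\cB(A)) + 1$. Assembling these via Terai's formula $\reg(I) = \operatorname{pd}(S/I^\vee)$ for squarefree $I$ yields
\[
\reg(\kk[B]) = \reg(S/\inn_A(L)) = \reg(\inn_A(L)) - 1 = \operatorname{pd}(S/C) - 1 = \dim(\cB(A)).
\]

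The main obstacle is the Gr\"obner step: identifying $\inn_A(L)$ with the squarefree Stanley--Reisner ideal of ${\mathcal S}(A)$ and importing the Conca--Varbaro preservation of regularity. The squarefreeness itself is essentially the translation of tropical data into Gr\"obner bases as in \cite{block2006tropical} and \cite{sturmfels1996grobner}, but the regularity comparison is a genuinely non-combinatorial input. Once it is in hand, every other step is a direct invocation of a theorem already established in the paper, so the proof reduces to a clean concatenation of these cited results, and the characterization $\dim \cB(A) = \lambda(B_A) - 1$ from Theorem~\ref{thm: dim} further converts the answer into a purely graph-theoretic invariant of $B$.
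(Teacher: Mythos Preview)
Your proof is correct and follows essentially the same route as the paper. For the regularity claim, the paper's Theorem~\ref{thm: tropicalCplxRegularity} chains together exactly the ingredients you list---Lemma~\ref{lem: bipartite}, the cellular resolution on $\cB(A)$ of the Alexander dual of $\inn_A(J_B)$ (packaged there as Corollary~\ref{cor: cellResDual}), Terai's duality (Theorem~\ref{thm: projreg}), and Conca--Varbaro (Theorem~\ref{thm: concaVarbaro})---and you have correctly identified the latter as the one genuinely non-combinatorial input. For the Krull dimension, the paper simply cites the rank of the bipartite incidence matrix, whereas you spell out the spanning-tree argument; these are the same fact.
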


This geometric perspective on the regularity of toric edge rings of bipartite graphs leads to a number of further results.  For instance, in Theorem \ref{thm: subgraph} we observe a monotonicity property for regularity, in the sense that if $B^\prime \subset B$ is a connected subgraph then
\[\reg ({\kk}[B^\prime]) \leq \reg ({\kk}[B]).\]
\noindent
This was proved for the case of \emph{induced} subgraphs (in the not necessarily bipartite context) by H\`a, Kara, and O'Keefe in \cite{ha2019}.

We apply our combinatorial characterization of $\dim(\cB(A))$ from Theorem \ref{thm: dim} to obtain new bounds on the regularity of toric edge rings. For instance, in Theorem \ref{thm: regbound} we prove a bound on $\reg({\kk}[B])$ for all connected bipartite graphs, in terms of the bipartition of $B$. This extends a result of Biermann, O'Keefe, and Van Tuyl from \cite{BieOkeVan17}, who established the bound for \emph{chordal bipartite} graphs.
Our Theorem \ref{thm: dim} also immediately implies a result of Herzog and Hibi (see \cite{herzog2020regularity}), who bound $\reg({\kk}[B])$
by one less than the \emph{matching number} of the graph $B$. It is clear that $\lambda(B) \leq \mat(B)$ and hence we obtain a strengthening of this result, see Corollary \ref{cor: matching}.

For our next result recall that ${\kk}[B]$ has a \emph{$q$-linear resolution} if the Betti numbers satisfy $\beta_{i,j} = 0$ for all $i$ and all $j \neq q + i -1$. Note that in this case we have $\reg({\kk}[B]) = q-1$ and $J_B$ is generated by homogeneous polynomials of degree $q$.  Our techniques lead to the following.

\begin{thm}[Theorem \ref{thm: linear}]\label{thm: qreg}
Suppose $B \subset K_{d,n}$ is a finite connected bipartite graph. Then we have
\begin{itemize}
     \item $\kk[B]$ has a $2$-linear resolution if and only if $B$ is obtained by appending trees to the vertices of $K_{2,d}$.   
    
    \item If $q \geq 3$ and  $\kk[B]$ has a $q$-linear resolution then $\kk[B]$ is a hypersurface.
    \end{itemize}
\end{thm}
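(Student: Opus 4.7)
The plan is to combine three results from earlier in the paper: the formula
\[
\reg(\kk[B]) = \dim\cB(A) = \lambda(B)-1
\]
coming from Theorems~\ref{thm: dim} and~\ref{thm: regdim}, subgraph monotonicity (Theorem~\ref{thm: subgraph}), and the standard correspondence between minimal binomial generators of $J_B$ and primitive even walks in $B$. Under the hypothesis of a $q$-linear resolution we read off $\lambda(B) = q$, while the condition that $J_B$ is generated in degree $q$ forces every induced cycle of $B$ to have length exactly $2q$.

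For Part~1, the direction $(\Leftarrow)$ is immediate: every edge of an appended tree lies on no cycle of $B$, hence its variable appears in no binomial generator of $J_B$, so $\kk[B] \cong \kk[K_{2,d}]\otimes_{\kk}\kk[\text{tree edges}]$. Since $\kk[K_{2,d}]$ is the Segre coordinate ring of $\mathbb{P}^1\times\mathbb{P}^{d-1}$ with its 2-linear Eagon--Northcott resolution, tensoring with a polynomial ring preserves 2-linearity. For $(\Rightarrow)$, one has $\lambda(B)=2$ together with chordal bipartiteness (every induced cycle has length $4$). The key step is to show that the 2-edge-connected core $B_0$ of $B$ must itself be a single $K_{2,d}$; subgraph monotonicity (Theorem~\ref{thm: subgraph}) excludes every connected bipartite subgraph $B'$ of $B$ with $\lambda(B')\geq 3$, which rules out $K_{3,3}$ and its various chordal bipartite extensions. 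A combinatorial enumeration of the admissible cores then finishes the classification.

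For Part~2, assume that $\kk[B]$ has a $q$-linear resolution with $q\geq 3$, and suppose for contradiction that $B$ is not unicyclic; then $B$ contains two distinct induced $2q$-cycles $C_1,C_2$. The strategy is to locate a connected subgraph $B_0\subseteq B$ with $\reg(\kk[B_0])>q-1$, contradicting Theorem~\ref{thm: subgraph}. If $|V(C_1)\cap V(C_2)|\leq 1$, take $B_0 = C_1 \cup P \cup C_2$ where $P$ is a minimal connecting path (possibly trivial); then $J_{B_0} = (f_{C_1},f_{C_2})$ is a complete intersection of two degree-$q$ binomials in disjoint variable supports, so its Koszul resolution gives $\reg(\kk[B_0]) = 2q-2 > q-1$. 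If instead the intersection has at least two vertices, then since both $C_i$ are induced of length $2q$ one extracts a theta subgraph $\Theta_q \subseteq B$ with three internally disjoint length-$q$ paths between two common endpoints, whose three induced $2q$-cycles give degree-$q$ generators $f_{12},f_{13},f_{23}$ of $J_{\Theta_q}$. A direct computation generalizing the $q=3$ identity $c_1c_3 f_{12} - b_1b_3 f_{13} + a_1a_3 f_{23} = 0$ produces a Koszul-style syzygy at internal degree $2q-1$ whose coefficients are degree-$(q-1)$ monomials each supported on a single path. These monomials are not divisible by any linear polynomial occurring as a coefficient in a linear syzygy among the $f_{ij}$ (such linear syzygies do exist, e.g.\ $c_2 f_{12} - b_2 f_{13} + a_2 f_{23} = 0$ for $q=3$), so the Koszul-style syzygy is a minimal generator of the first syzygy module, giving $\beta_{2,2q-1}(\kk[\Theta_q])\neq 0$ and hence $\reg(\kk[\Theta_q])\geq 2q-3 > q-1$. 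Either way Theorem~\ref{thm: subgraph} supplies a contradiction. Thus $B$ is unicyclic, $J_B$ is the principal ideal generated by the unique cycle binomial, and $\kk[B]$ is a hypersurface.

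The main obstacle is the syzygy analysis for $\Theta_q$: verifying that the Koszul-style syzygy at internal degree $2q-1$ is genuinely a minimal generator of the first syzygy module, i.e.\ non-redundant modulo all linear syzygies. This reduces to a monomial-divisibility check that can be organized using the $\mathfrak{S}_3$-symmetry of $\Theta_q$ permuting the three paths. A secondary technical point in Part~1 is the classification of chordal bipartite graphs whose proper connected subgraphs all have $\lambda\leq 2$; this should follow from an inductive leaf-peeling argument combined with direct analysis of how pairs of 4-cycles may overlap in such a graph.
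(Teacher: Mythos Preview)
Your overall strategy for Part~2---reduce via subgraph monotonicity to a small subgraph $B_0 \subseteq B$ containing two $2q$-cycles and show $\reg(\kk[B_0]) > q-1$---matches the paper's.  The difference is in how the lower bound on $\reg(\kk[B_0])$ is obtained, and here your argument has two genuine gaps.

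First, the extraction of $\Theta_q$ when $|V(C_1)\cap V(C_2)|\geq 2$ is not always possible.  If the two $2q$-cycles share a path of length $k$ with $1\leq k<q$ (which the hypothesis ``no cycles of length $<2q$'' permits), then $C_1\cup C_2 = \Theta_{k,\,2q-k,\,2q-k}$, and there is no reason for a $\Theta_{q,q,q}$ to sit inside $B$ at all.  Second, and more seriously, even when $\Theta_q$ is available your syzygy bound $\reg(\kk[\Theta_q])\geq 2q-3$ is false for $q\geq 4$.  The two ``obvious'' syzygies among $f_{12},f_{13},f_{23}$ are $(\gamma_{\pm},-\beta_{\pm},\alpha_{\pm})$, where $\alpha_{\pm}$ are the odd/even edge-monomials along path~$A$; these have degrees $\lceil q/2\rceil$ and $\lfloor q/2\rfloor$, not $q-1$.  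For $q=3$ one of them is the linear syzygy and the other is your degree-$2$ ``Koszul-style'' syzygy, so your picture is correct there; but already for $q=4$ both are quadratic, and in fact $\reg(\kk[\Theta_{4,4,4}])=4$ (one checks $\lambda(\Theta_{4,4,4})=\mat(\Theta_{4,4,4})=5$), strictly less than your claimed $2\cdot 4-3=5$.  So the minimal-syzygy step you flag as ``the main obstacle'' does not merely need verification---it fails.

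The paper sidesteps all homological computation by working directly with the recession connectivity: by Corollary~\ref{cor: graph+regularity} it suffices to show $\lambda(B_0)\geq q+1$, and this is done by exhibiting an explicit subgraph $S\subseteq B_0$ (assembled from maximal matchings on the cycles, plus any connecting path) with $\bddgraph{S}{B_0}$ strongly connected and $c(S)\geq q+1$.  A three-case analysis of how $C_1$ and $C_2$ may overlap (disjoint or one shared vertex; two shared vertices at distance $q$; a shared path of length $\leq q$) covers everything uniformly.  Your disjoint-support complete-intersection argument for $|V(C_1)\cap V(C_2)|\leq 1$ is correct and arguably slicker than the paper's treatment of that case; it is only the overlapping case that breaks.

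For Part~1, your sketch is close to what the paper does: the ``pair-of-4-cycles overlap analysis'' you mention is exactly the argument, and the forbidden configurations (two $4$-cycles sharing zero or one edge) are ruled out because they contain subgraphs with $\lambda\geq 3$.
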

\noindent The first statement was established by Ohsugi and Hibi in \cite{ohsugihibi1999}*{Theorem 4.6} and the second statement recovers a result of Tsuchiya from \cite{tsuchiya2021}. Our short proofs of these results demonstrates how tropical and discrete geometric methods can be brought to bear. 

In recent years we have seen a number of mathematicians working on the combinatorics of subdivisions of root polytopes, and a mostly disjoint group studying the homological properties of toric edge ideals.  As far as we know there has been no dialogue between these communities and it is our hope that the work presented here will lead to more fruitful connections and applications.  We explore some potential projects in Section \ref{sec: GP}.

\subsection{Running example}
We end this section with an extended example to illustrate some of the main results described above. These particular constructions will be used throughout the paper.

\begin{example}\label{ex: running}
Consider the $3 \times 4$ tropical matrix 
\[A = \begin{bmatrix} 0 & 3 & \infty & 0\\ 0 & 0 & 2 & 3 \\ 0 & \infty & 0 & \infty \end{bmatrix}.\]
\noindent
The matrix $A$ defines the bipartite graph $B_A$, the arrangement ${\mathcal H}(A)$ of four tropical hyperplanes in $\troptorus{3}$, as well as the bounded subcomplex $\cB(A)$ depicted in Figure \ref{fig: tropHAex}. 
\end{example}

\begin{figure}[h]
\centering
$\vcenter{\hbox{\includegraphics[scale = 0.25]{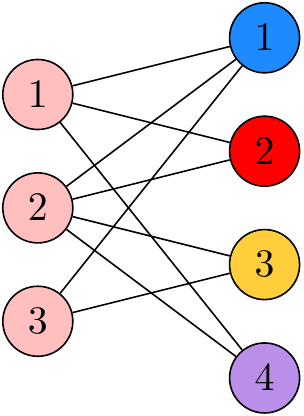}}} \qquad
\vcenter{\hbox{\includegraphics[scale = 0.75]{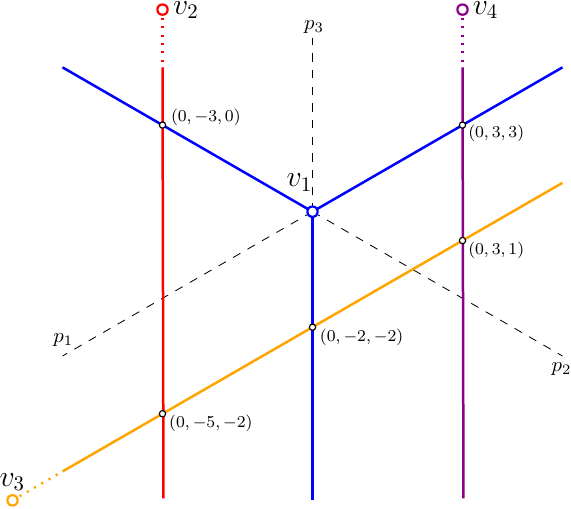}}}
\qquad
\vcenter{\hbox{\includegraphics[scale=0.75]{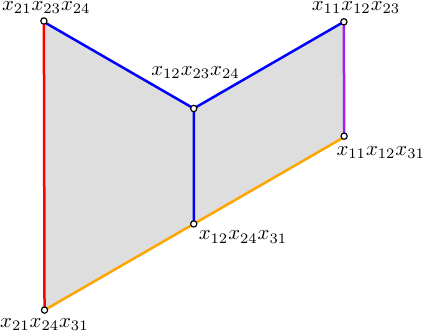}}}$
\caption{The bipartite graph $B_A$, hyperplane arrangement ${\mathcal H} = \cH(A)$, and bounded complex $\cB(A)$ associated to the tropical matrix $A$ from Example \ref{ex: running}. The points in $\troptorus{3}$ are labelled by their representative with $p_1 = 0$.}
\label{fig: tropHAex}
\end{figure}

One can check that the graph $B_A$ has recession connectivity $\lambda(B_A) = 3$, and hence Theorem \ref{thm: dim} tells us that $\dim(\cB(A)) = 2$, as indicated in the figure.
The natural labeling of cells in the decomposition of $\troptorus{3}$  determined by $\cH(A)$ defines the fine/coarse type ideals, as well as the fine/coarse cotype ideals.  Theorem \ref{thm: resolutions} says that the type ideals have cocellular resolutions supported on the tropical complex $\cC(A)$, whereas the cotype ideals have resolutions supported on the bounded complex $\cB(A)$. See Figure \ref{fig: tropHAex} for an illustration of the resolution of the fine cotype ideal.

\renewcommand*{\thethm}{\Alph{thm}}
The bipartite graph $B$ also defines a generalized permutohedron $P_B$, and if $B= B_A$ for some tropical matrix $A$ we obtain a mixed subdivision of $P_B$ that is dual to the arrangement ${\mathcal H}(A)$.  A corollary to Theorem \ref{thm: resolutions} (see Corollaries \ref{cor: cellResGenPerm} and \ref{cor: coarsemonomials}) says that this mixed subdivision supports a minimal cellular resolution of the ideal generated by the lattice points of $P_B$ (which coincides with the coarse type ideal associated to $A$). See Figure \ref{fig: mixedSubdiv} for an illustration.

\begin{figure}[h]
\begin{center}
  \includegraphics[width=\textwidth]{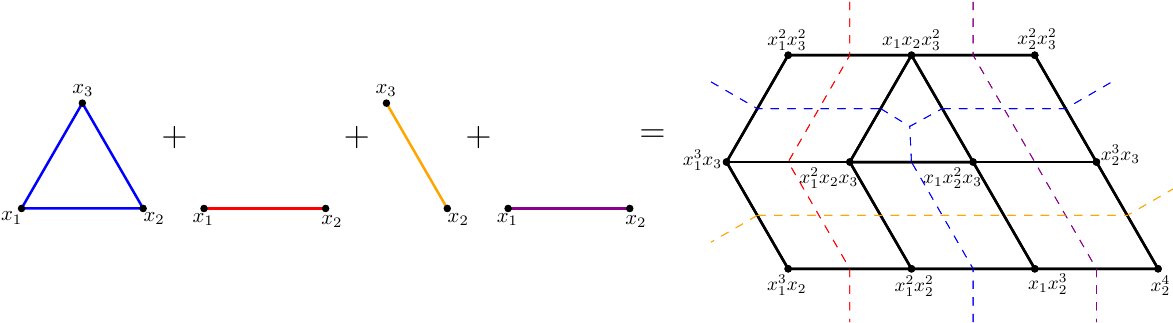}
\end{center}
\caption{The generalized permutohedron $P_B$ along with the mixed subdivision dual to the hyperplane arrangement in Figure \ref{fig: tropHAex}. The monomial ideal generated by the vertices is minimally supported by this complex.}\label{fig: mixedSubdiv}
\end{figure}

Next recall that any graph $B$ on edge set $E$ defines a toric edge ring $\kk[B]$ a toric edge ideal $J_B \subset \kk[E]$.  For the bipartite graph $B = B_A$ depicted in Figure \ref{fig: tropHAex} we have 
\[\widetilde{S} = \kk[E] = \kk[x_{11}, x_{12}, x_{14}, x_{21}, x_{22}, x_{23}, x_{24}, x_{31}, x_{33}],\]
\noindent
and the corresponding toric edge ideal is given by
\[J_B = \langle x_{11}x_{22} - \underline{x_{12} x_{21}}, \quad \underline{x_{11} x_{24}} - x_{14} x_{21}, \quad \underline{x_{12} x_{24}} - x_{14} x_{22}, \quad x_{21} x_{33} - \underline{x_{23} x_{31}}\rangle.\]
\noindent
Note that the binomial generators are describe by the minimal closed even walks in $B$.  

Our observations imply that the ideal $J_B$ corresponds to the lattice ideal of the root polytope $Q_B$.  Furthermore, the entries of the tropical matrix $A$ provides a monomial term order on $\widetilde{S}$, which in turn defines an initial ideal $(\inn_A(J_B))$.  In our example $(\inn_A(J_B))$ is generated by the underlined monomials above. The Alexander dual of this ideal (in the polynomial ring $\widetilde S$) is given by
\[(\inn_A(J_B))^\vee =  \langle x_{12}x_{23}x_{24}, \; \; x_{11}x_{12}x_{23}, \; \; x_{11} x_{31} x_{12}, \; \; x_{31} x_{12} x_{24}, \; x_{21} x_{31} x_{24}, \; \; x_{21} x_{23} x_{24} \rangle.\]

Our Theorem \ref{thm: Alexdual} then says that $(\inn_A(J_B))^\vee$ coincides with the fine cotype ideal associated to the arrangement ${\mathcal H} = {\mathcal H}(A)$, which has a minimal resolution supported on the bounded complex $\cB$ (again see Figure \ref{fig: tropHAex}). Also by construction the arrangement ${\mathcal H}(A)$ depicted in Figure \ref{fig: tropHAex} satisfies $B = B_A$, and from Theorem \ref{thm: regdim} we conclude that
\[\reg(\kk[B]) = \dim(\cB(A)) = 2.\]

Note that in this case $J_B$ is generated by binomials of degree 2 but $\reg(\kk[B]) = 2$.  Hence this ideal does not have a linear resolution, which can also be deduced from our Theorem \ref{thm: qreg}.

\subsection{Organization and notation}

The rest of the paper is organized as follows. In Section~\ref{sec: arrangements}, we establish fundamental notation and terminology for the remainder of the paper surrounding tropical hyperplane arrangements with not necessarily full support, root polytopes, and generalized permutohedra. In Section~\ref{sec: boundedComplex}, we study the topology of the bounded complex obtained from such a tropical hyperplane arrangement. We also define the \textit{recession graph} (Definition~\ref{def: recession}) of a bipartite graph, whose connectivity properties can be used to detect bounded cells (Proposition~\ref{prop:bounded+graph}) and bound the dimension of the bounded complex (Proposition~\ref{prop: bounded+complex+dimension}). 

In Section~\ref{sec: cellRes}, we introduce the fine and coarse \textit{type} and \textit{cotype} ideals associated to an arrangement (Definition~\ref{def: typeIdeals}) and show that the bounded complex supports a minimal cellular resolution of cotype ideals (Proposition~\ref{prop: cotypeCellRes}), and that for sufficiently generic arrangements, the tropical complex supports a minimal, linear, cocellular resolution of type ideals (Proposition~\ref{prop: cocellularRes}). These results imply that the fine mixed subdivision of the generalized permutohedron associated to the arrangement supports a cellular resolution of the type ideal (Corollary~\ref{cor: cellResGenPerm}), which we apply to bound the (homological) types of type ideals in the case of graphic arrangements (Corollary~\ref{cor: zonotope}) and give a novel way of computing volumes of generalized permutohedra (Proposition~\ref{prop: volumeGenPerm}).

In Section \ref{sec: lattice}, we establish that the toric edge ideal of a bipartite graph $B$ is exactly the toric lattice ideal of the associated root polytope $Q_B$, and observe conditions on the root polytope such that its lattice ideal corresponds to well-studied ideals in commutative algebra such as ladder determinantal ideals and Ferrers ideals. This crucial observation drives the results of Section~\ref{sec: edge+ideal+results}, where we relate combinatorial and topological properties of the bounded complex associated to an arrangement with homological properties of the associated toric edge ideal. In particular, our methods recover and strengthen several results in the literature bounding regularity of toric edge ideals of bipartite graphs; see Theorem \ref{thm: chordalbip} and Corollary \ref{cor: matching}. In addition, we give a novel proof characterizing toric edge ideals of bipartite graphs with linear resolution (Theorem~\ref{thm: linear}). Finally, we conclude the paper with Section~\ref{sec: GP}, where we collect questions and potential directions for further exploration.

\subsection{Notation} Throughout the paper we make use of several mathematical objects that may not be familiar to the reader. For convenience we collect notations here.


\begin{itemize}
\item $\TT = (\RR\cup \{-\infty\},\oplus, \odot)$ is the \emph{(max)-tropical semiring} (Definition~\ref{def: tropicalSemiRing});

\item $\troptorus{d}$ is the tropical torus;

\item $A$ is a $d \times n$ matrix with entries in ${\mathbb R} \cup \{ \infty \}$ (a \textit{tropical matrix});

\item
$\cH(A)$ is the tropical hyperplane arrangement associated to $A$ (Definition~\ref{def: tropicalHyperplane});

\item
$\cC(A)$ is the tropical complex associated to $\cH(A)$, $\cB(A)$ is the bounded subcomplex (Definition~\ref{def: cells});

\item
$I_{T(\cH)}$ and $I_{\overline{T}(\cH)}$ are the fine type and cotype ideal, and $I_{\bft(\cH)}$ and  $I_{\overline{\bft}(\cH)}$ are the coarse type and cotype ideal, associated to $\cH$ (Definition~\ref{def: typeIdeals});

\item $B \subseteq K_{d,n}$ is a bipartite graph, $B_A$ is the bipartite graph associated to $A$;

\item
$Q_B$ is the root polytope associated to $B$ (Definition~\ref{def: rootPolytope});

\item
$P_B$ is the generalized permutohedron (sum of simplices) associated to $B$ (Definition~\ref{def: generalizedPermutohedron});

\item
${\mathbb k}[B]$ is the toric edge ring and $J_B$ is the toric edge ideal associated to $B$ with underyling field ${\mathbb k}$ (Definition~\ref{def: toric+edge+ideal});

\item
$\bddgraph{S}{B}$ is the recession graph of $S$ inside $B$ (Definition~\ref{def: recession+graph});
    
\item $\lambda(B)$ is the \textit{recession connectivity} of $B$ (Definition~\ref{def: recession}); 

\item $\LD(B)$ is the \textit{left-degree vector} of $B$ and $\RD(B)$ is its \textit{right-degree vector} (see Definition~\ref{def: leftrightDV}).
\end{itemize}

\section{Tropical hyperplanes, root polytopes, and generalized permutohedra}\label{sec: arrangements}
In this section, we review relevant notions from tropical convexity and establish notation that will be used in the rest of the paper.  We discuss tropical hyperplane arrangements and corresponding tropical complexes, and how their combinatorial data is encoded in both fine and coarse types and cotypes. We then discuss the connection to regular subdivisions of root polytopes and mixed subdivisions of generalized permutohedra.
For proofs and further reading, we refer to~\cite{JoswigBook22}.

\label{sec:background}

\subsection{Tropical Hyperplane Arrangements}
\phantom{}

\begin{definition}[Tropical Semiring]\label{def: tropicalSemiRing}
Let $\TT = (\RR\cup \{-\infty\},\oplus, \odot)$ be the \emph{(max)-tropical semiring} where
$$
x\oplus y \coloneqq \max(x,y) \quad \text{and} \quad x\odot y \coloneqq x+y.
$$
One can also define the \emph{(min)-tropical semiring} by replacing $-\infty$ with $\infty$, and the operation $\max$ with $\min$; the two semi-rings are isomorphic via
$$
-\max(x,y) = \min(-x,-y).
$$
\end{definition}
\textit{Unless stated otherwise, we shall work with the max-tropical semiring,} and note that all statements can be rephrased in terms of the min-tropical semiring.
If we need to work with both simultaneously, we shall differentiate between them by denoting them respectively by $\Tmin$ and $\Tmax$.

We can extend the operations on $\TT$ to addition and scalar multiplication on $\TT^d$, giving it the structure of a semimodule:
\begin{align*}
{\bf x} \oplus {\bf y} = (x_1 \oplus y_1, \dots, x_d \oplus y_d) \quad , \quad c \odot {\bf x} = (c \odot x_1, \dots, c \odot x_d) \quad , \quad \forall \; {\bf x}, {\bf y} \in \TT^d \, , \, c \in \TT \, .
\end{align*}
Restricting $\TT$ to $\RR$ gives a similar semimodule structure on $\RR^d$.
This will be convenient when considering tropical convexity in Section~\ref{sec:type+topology}.

Following \cite{fink2015stiefel}, we generalize the notion of a tropical hyperplane arrangement by allowing some of the hyperplanes to be ``taken to infinity'' in some directions. 

\begin{definition}[Tropical Hyperplane Arrangement]\label{def: tropicalHyperplane}
Let ${\bf a} \in (\RR\cup\{\infty\})^d$ with at least one finite entry.
The \emph{(max)-tropical hyperplane} $H$ with \emph{apex} ${\bf a}$ is the set
$$
H({\bf a}) \coloneqq \left\{{\bf p}\in\RR^d\mid (-a_1\odot p_1)\oplus (-a_2\odot p_2)\oplus \dots \oplus (-a_d\odot p_d) \text{ is attained at least twice} \right\} \, , 
$$
that is, it is the vanishing locus of a max-tropical homogeneous linear polynomial whose coefficients are $-{\bf a}$.

A $d \times n$ matrix $A = ({\bf a}_1,\dots, {\bf a}_n)$ with entries $ \in {\mathbb R} \cup \{\infty\}$ such that each ${\bf a}_i$ has at least one finite entry will be called a \emph{tropical matrix}.  Such a matrix gives rise to an (ordered) \textit{(max)-tropical hyperplane arrangement} $\cH(A) = (H_1,\dots, H_n)$ in $\RR^d$ whose $i$th hyperplane $H_i := H({\bf a}_i)$ is the tropical hyperplane with apex ${\bf a}_i$.
\end{definition}

An important observation is that if $H({\bf a})$ is a max-tropical hyperplane, its apex ${\bf a}$ has entries in $\Tmin$, and vice-versa.
Unless stated otherwise, we shall work with max-tropical hyperplanes, so that their apices will be elements of $\Tmin^d$.
Although we allow tropical hyperplanes with infinite apices, we will only consider their vanishing locus in $\RR^d$.

Observe that for any point ${\bf p} \in H({\bf a})$, the point ${\bf p} + \lambda\cdot\one$ is also contained in $H({\bf a})$, where $\one\in\RR^d$ is the vector of all ones.
For this reason, one typically views tropical objects in the \emph{tropical torus} $\troptorus{d}$, by definition the quotient of the Euclidean space $\RR^d$ by the linear subspace $\RR\one$.
By interpreting this quotient in the category of topological spaces, $\troptorus{d}$ inherits a natural topology which is homeomorphic to the usual topology on $\RR^{d-1}$. While $\troptorus{d}$ is a convenient space to view geometric objects, we must define said objects in $\RR^d$ first, as the addition operation of the tropical semiring is not well defined in $\troptorus{d}$.

\begin{definition}[Support of a Hyperplane]\label{def: support}  The \emph{support} of a tropical hyperplane $H({\bf a})$ is $\supp(H) = \{k\in [d] \mid a_k \text{ finite }\}$.
The support of a hyperplane arrangement $\cH(A) = (H_1,\dots H_n)$ is the set
$$
\supp(A) = \{(i,j)\in [d]\times [n]\mid i\text{ is in the support of } H_j\},
$$
so that $\supp(\cH(A))$ is equal to the support of the matrix $A$.
\end{definition}

The combinatorics of supports can be described by a bipartite graph in the following way.

\begin{definition}[Bipartite graphs from tropical matrices]\label{def: bipartiteGraph}
Suppose $A$ is a $d \times n$ tropical matrix and let $\cH(A)$ denote the corresponding arrangement of $n$ tropical hyperplanes in $\troptorus{d}$. Define $B_A \subseteq K_{d,n}$ to be the bipartite graph whose edges encode the support of $A$, i.e.,
\[
(i,j) \in E(B_A) \ \Longleftrightarrow \ (i,j) \in \supp(A) \, .
\]
\end{definition}

We refer to Figure \ref{fig: tropHAex} for an illustration of ${\mathcal H}(A)$ and $B_A$ described in Definitions \ref{def: tropicalHyperplane} and \ref{def: bipartiteGraph}.  One can encode information about a tropical hyperplane arrangment $\cH(A)$ via left and right degree vectors of the associated bipartite graph $B_A$.

\begin{definition}[Left/right degree vectors]\label{def: leftrightDV} For a bipartite graph $B \subset K_{d,n}$, the \textit{left degree vector} $\LD(B) = (l_1, \dots, l_d)$ is the tuple of node degrees of vertices $1,\dots, d$, and the \textit{right degree vector} $\RD(B) = (r_1, \dots, r_n)$ is the tuple of node degrees of vertices $1,\dots, n$.

\end{definition}

\begin{example}\label{ex: supportSets}
We continue with our running example 
\[A = \begin{bmatrix} 0 & 3 & \infty & 0 \\ 0 & 0 & 2 & 3 \\ 0 & \infty & 0 & \infty \end{bmatrix}.\]
The corresponding bipartite graph $B_A$ and tropical hyperplane arrangement ${\mathcal H}(A)$ is depicted in Figure \ref{fig: tropHAex}. The tropical hyperplane $H({\bf a}_1)$ has full support $\{1,2,3\}$, hyperplanes $H({\bf a}_2)$ and $H({\bf a}_4)$ have support set $\{1,2\}$, and hyperplane $H({\bf a}_3)$ has support set $\{2,3\}$. This support data can be combined to give the corresponding bipartite graph $B_A$ (see Figure \ref{fig: tropHAex}), with left degree vector $\LD(B_A) = (3,4,2)$ and right degree vector $\RD(B_A) = (3,2,2,2)$.

\end{example}

\subsection{Types and cotypes} 
By definition, a point ${\bf p }\in \troptorus{d}$ is contained in $H({\bf a})$ if and only if $\max_{j \in [d]}(p_j - a_j)$ is attained at least twice. The points that obtain this maximum value in one particular term form a polyhedral \emph{sector}, and a tropical hyperplane decomposes $\troptorus{d}$ into these sectors.

\begin{definition}[Sector]
The $i$-th (closed) sector of the (max)-tropical hyperplane $H({\bf a})$  is
\[
S_i({\bf a}) = \{{\bf p} \in \troptorus{d} \mid p_i - a_i \geq p_j - a_j \ \text{for all} \ j \in [d] \} \, .
\]
\end{definition}
The definition of a min-tropical sector is identical except with the inequality reversed, as the corresponding tropical linear functional is looking to minimize.

Given an arrangement ${\mathcal H}$ of tropical hyperplanes, to each point in $\troptorus{d}$ we can record which sector it sits in with respect to each hyperplane. This leads to the notion of \emph{types} and \emph{cotypes}.

\begin{definition}[Fine type/cotype]\label{def: fineType}
Let $A = ({\bf a}_1,\dots, {\bf a}_n)$ be a $d \times n$ tropical matrix, and let $\cH = \cH(A)$ denote the corresponding arrangement of $n$ tropical hyperplanes in $\troptorus{d}$. The \emph{fine type} of a point ${\bf p} \in \troptorus{d}$ with respect to $\cH$ is the table $T_\cH({\bf p})\in\{0,1\}^{d\times n}$ with
$$
T_\cH({\bf p})_{ik} = 1 \text{ if and only if } {\bf p}\in S_i({\bf a}_k)
$$
for $(i,k)\in \supp(\cH)$. The \emph{fine cotype} $\overline{T}_\cH({\bf p})\in\{0,1\}^{d\times n}$ is defined by $\overline{T}_\cH({\bf p})_{ik} = 1-T_\cH({\bf p})_{ik}$ for $(i,k)\in \supp(\cH)$, and zero for $(i,k)\notin \supp(\cH)$.
\end{definition}

\begin{definition}[Coarse type/cotype]\label{def: coarseType} Let $\cH = \cH(A)$ be an arrangement of $n$ max-tropical hyperplanes in $\troptorus{d}$. The \emph{coarse type} of a point ${\bf p}\in \troptorus{d}$ with respect to $\cH$ is given by $\bft_\cH({\bf p}) = (t_1, t_2,\dots, t_d)\in \NN^d$ with
$$
t_i = \sum_{k=1}^n T_\cH({\bf p})_{ik}
$$
for $i\in [d]$.
\end{definition}

\begin{notation}\label{not: typesAsGraphs}
We will often view fine (co)types as (complements of) bipartite subgraphs of the support graph $B_A$ described in Definition \ref{def: bipartiteGraph}. Explicitly, a fine type $T \in \{0,1\}^{d\times n}$ encodes a subgraph $B_T \subset B_A$ whose edges are $(i,k)$ whenever $T$ has a 1 in the $(i,k)$-th entry.
Similarly, a fine cotype $\overline{T}$ encodes a subgraph $B_{\overline{T}}\subset B_A$ given by the complement of $B_T$ inside $B_A$.
As a bipartite graph, the corresponding coarse (co)type is the left degree vector $\LD(B_T)$.
As they are equivalent, we will only explicitly use $B_T$ when graph properties are required.
\end{notation}

Observe that not all tables or bipartite graphs can arise as fine types, as every point in $\troptorus{d}$ is contained in at least one sector of a tropical hyperplane.
Each column of a viable table must have at least one non-zero entry, and viable bipartite graphs must have no disconnected nodes in $[n]$.
These tables and graphs are called \emph{type tables} and \emph{type graphs}.

\begin{remark}
Tropical types are also referred to as \textit{tropical covectors} in the literature (see e.g. \cite{fink2015stiefel}). As we will work with both types and cotypes, we choose to follow the terminology in \cite{dochtermann2012tropical} .
\end{remark}

As discussed in \cite{develin2004tropical} and \cite{JoswigLoho:2016}, 
the type data of points ${\bf p} \in \troptorus{d}$ gives rise to a polyhedral complex associated to a tropical hyperplane arrangement as follows.

\begin{definition}[Tropical complex, bounded subcomplex]\label{def: cells} Let $A$ be a $d \times n$ tropical matrix and let $\cH(A)$ denote the corresponding arrangement in $\troptorus{d}$. For a fixed fine type $T = T_\cH({\bf p})$, the set
$$
C^\circ_T \coloneqq \{{\bf q}\in\troptorus{d}\mid T({\bf q}) = T\}
$$
is a relatively open subset of $\troptorus{d} = \RR^{d-1}$ called the \emph{relatively open cell of type $T$}. The set of all relatively open cells $\cC^\circ = \cC^\circ(A)$ partitions the tropical torus $\troptorus{d}$.

Denote by $C_T$ the \emph{closed cell} of type $T$, the set of all points ${\bf q} \in \troptorus{d}$ such that $T({\bf q}) \supseteq T$.
The collection of all closed cells yields a polyhedral subdivision $\cC = \cC(A)$ of $\troptorus{d}$ called the \emph{tropical complex}.
The cells which are bounded form the \emph{bounded subcomplex} $\cB = \cB(A)$. 
\end{definition}

\begin{example} \label{ex: tropicalcomplex}
Consider the running example discussed in Example~\ref{ex: supportSets}.
Figure~\ref{fig: tropicalcomplex} shows the tropical complex and bounded complex, along with certain distinguished cells whose types are given in Figure~\ref{fig: tropicaltypes}.
\begin{figure}
    \centering
    $\vcenter{\hbox{\includegraphics[scale=0.75]{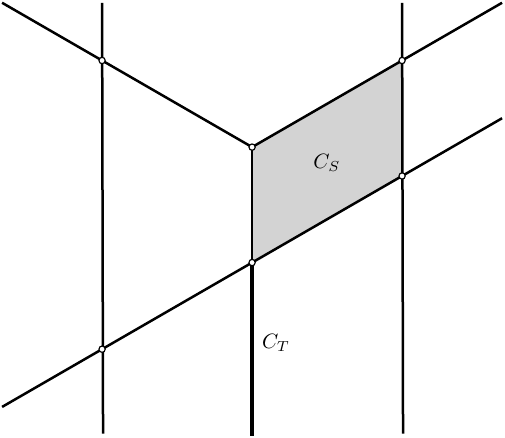}}} \qquad\qquad
    \vcenter{\hbox{\includegraphics[scale=0.75]{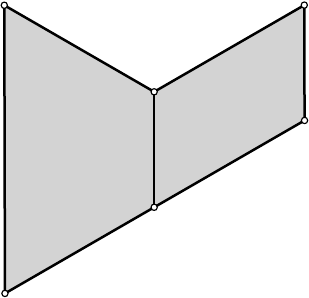}}}$
    \caption{The tropical complex and bounded complex corresponding to the tropical hyperplane arrangement in Figure~\ref{fig: tropHAex}.}
    \label{fig: tropicalcomplex}
\end{figure}
\begin{figure}
    \centering
    \includegraphics[]{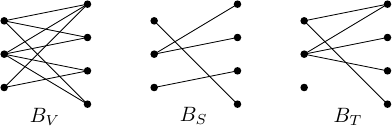}
    \caption{The tropical types corresponding to the tropical complex and distinguished cells in Figure~\ref{fig: tropicalcomplex}.}
    \label{fig: tropicaltypes}
\end{figure}
\end{example}

 The combinatorics of tropical types of an arrangement ${\mathcal H} = {\mathcal H}(A)$ reflect the geometry of the tropical complex $\cC(A)$. In the following we consider fine types as bipartite graphs, and use $c(G)$ to denote the number of connected components of a graph $G$.
 \begin{lemma}[\cite{develin2004tropical}, \cite{JoswigLoho:2016}] \label{lem:type+facts}
 Suppose $A$ is a $d \times n$ tropical matrix with corresponding arrangement ${\mathcal H}(A)$, and let $\cC(A)$ be the tropical complex. Let $\cT$ be the collection of fine types that appear. For all $S,T \in \cT$, we have:
 \begin{enumerate}
     \item $C_S \cap C_T = C_{S \cup T}$ (where $B_{S\cup T} = B_S \cup B_T$),
     \item $C_S \subseteq C_T \ \Longleftrightarrow \ B_S \supseteq B_T$,
     \item $\dim(C_T) = c(B_T) - 1$.
 \end{enumerate}
 \end{lemma}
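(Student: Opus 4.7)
The plan is to prove all three parts directly from the description of the closed cell $C_T$ as an intersection of tropical sectors. Unpacking Definition~\ref{def: cells} and Definition~\ref{def: fineType}, a point $\mathbf{q}$ lies in $C_T$ exactly when $T_\cH(\mathbf{q}) \supseteq T$, i.e.
\[
C_T \;=\; \bigcap_{(i,k)\,:\,T_{ik}=1} S_i(\mathbf{a}_k),
\]
and each sector $S_i(\mathbf{a}_k)$ is the closed polyhedron in $\troptorus{d}$ cut out by $q_i - a_{ik} \geq q_j - a_{jk}$ for $j\in\supp(\mathbf{a}_k)$. All three parts will follow from careful inspection of this description.

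For part (1), the set $C_S \cap C_T$ is the intersection of the sectors indexed by the $1$-entries of $S$ together with those indexed by the $1$-entries of $T$, which is exactly the intersection indexed by the $1$-entries of the entrywise OR $S \cup T$. In the bipartite-graph encoding of Notation~\ref{not: typesAsGraphs} this reads $B_{S\cup T} = B_S \cup B_T$, and the identity $C_S \cap C_T = C_{S\cup T}$ follows at once. For part (2), the direction $B_S \supseteq B_T \Rightarrow C_S \subseteq C_T$ is immediate, since $C_S$ is cut out by at least as many sector constraints as $C_T$. For the converse, take any point $\mathbf{p}$ in the relatively open cell $C^\circ_S$, where $T_\cH(\mathbf{p}) = S$ exactly; since $\mathbf{p} \in C_S \subseteq C_T$ forces $T_\cH(\mathbf{p}) \supseteq T$, we obtain $S \supseteq T$ as tables, equivalently $B_S \supseteq B_T$.

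Part (3) is the substantive computation. For each column $k$, set $I_k = \{i : T_{ik}=1\}$. Lying in $\bigcap_{i \in I_k} S_i(\mathbf{a}_k)$ forces all shifted coordinates $\{q_i - a_{ik} : i\in I_k\}$ to agree at a common value $m_k$ that moreover satisfies $m_k \geq q_j - a_{jk}$ for $j\in\supp(\mathbf{a}_k)\setminus I_k$. Thus $C_T$ is the intersection in $\RR^d$ of the affine subspace cut out by the equalities $q_i - q_{i'} = a_{ik} - a_{i'k}$, where $k$ ranges over $[n]$ and $i,i' \in I_k$, together with the linear inequalities above. Each such equality is recorded in $B_T$ by the pair of edges $(i,k),(i',k)$, so two row indices $i,i'$ are coupled precisely when they lie in the same connected component of $B_T$: within each component all mutual differences $q_i - q_{i'}$ are pinned down, while the overall additive level of the rows in each component is free.

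Consequently, the affine hull of $C_T$ in $\RR^d$ has dimension equal to $c(B_T)$, contributing one free parameter per connected component (with isolated row vertices contributing their own components). Passing to the tropical torus $\troptorus{d} = \RR^d / \RR\one$ removes one global shift, yielding $\dim(C_T) = c(B_T) - 1$. To see the inequalities do not cut the dimension down further, observe that at any point of the relatively open cell $C_T^\circ$ the constraints $m_k \geq q_j - a_{jk}$ for $j\notin I_k$ are strict and hence inactive on the interior. The main obstacle is the bookkeeping that translates column-wise equality constraints into the correct coupling structure on row coordinates, and in particular verifying that isolated row vertices in $[d]$, as well as rows linked through multi-row columns, each contribute exactly one connected component's worth of freedom before passing to the torus quotient.
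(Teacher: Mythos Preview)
Your argument is correct. Parts (1) and (2) follow immediately from the sector-intersection description of $C_T$, and your treatment of part (3) correctly identifies the affine hull of $C_T$ in $\RR^d$: the equalities $q_i - q_{i'} = a_{ik} - a_{i'k}$ couple row coordinates exactly along connected components of $B_T$, every column vertex of $B_T$ is non-isolated so components are counted by row blocks, and since $T \in \cT$ the relatively open cell $C_T^\circ$ is nonempty, guaranteeing both consistency of the equalities and strictness of the remaining inequalities. Quotienting by $\RR\one$ then gives $c(B_T)-1$.

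By contrast, the paper does not prove this lemma at all: it simply cites Remark~28 and Corollary~38 of \cite{JoswigLoho:2016}, which establish the result in the more general framework of weighted digraph polyhedra and covector decompositions. Your approach is more elementary and self-contained, deriving everything directly from the sector description; what the cited approach buys is a uniform treatment that also yields the recession-cone results (such as Proposition~\ref{prop:bounded+graph}) used later in the paper.
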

 \begin{proof}
 See Remark 28 and Corollary 38 of~\cite{JoswigLoho:2016}.
 \end{proof}

Tropical hyperplanes arrangements enjoy a  natural notion of duality that we will need in our work. 
Explicitly, an arrangement $\cH(A)$ of $n$ hyperplanes in $\troptorus{d}$ is ``dual'' to $\cH(A^T)$, the arrangement of $d$ hyperplanes in $\troptorus{n}$ obtained by taking the transpose of the tropical matrix.
Moreover, despite being in different ambient spaces, these arrangements have isomorphic bounded complexes; this follows from \cite{fink2015stiefel}*{Proposition 6.2}.

\begin{prop}\label{prop: bounded+duality}
Let $A$ be a $d \times n$ tropical matrix.
There exists a piecewise linear bijection between the bounded complexes $\cB(A)$ and $\cB(A^T)$ that induces an isomorphism between their face posets.
\end{prop}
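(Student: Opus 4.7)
The plan is to exploit the central correspondence of Develin--Sturmfels and Fink--Rinc\'on between tropical complexes and regular subdivisions of root polytopes. The key observation is that the root polytope $Q_{B_A}$ together with its lift is invariant under transposition of $A$: the transpose $A^T$ merely swaps the two color classes of the bipartition of $B_A$, and the lifting value $a_{ij}$ of the vertex $\mathbf{e}_i-\mathbf{e}_j$ is read off identically from $A$ or from $A^T$. Consequently the regular subdivisions $\cS(A)$ and $\cS(A^T)$ are canonically identified as subdivisions of the same polytope.

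With this in hand, I would first recall the duality between the tropical complex $\cC(A)$ and the subdivision $\cS(A)$: each cell $C_T$ of $\cC(A)$ of dimension $k$ is dual to a face $F_T$ of $\cS(A)$ of codimension $k$ (compatible with $\dim C_T = c(B_T)-1$ from Lemma \ref{lem:type+facts}), and the bounded cells of $\cB(A)$ are precisely those dual to the \emph{interior} faces of $\cS(A)$, i.e., those not lying on $\partial Q_{B_A}$. Applying this duality to both $A$ and $A^T$ and composing with the identification $\cS(A)=\cS(A^T)$ yields a canonical inclusion-preserving bijection between the face posets of $\cB(A)$ and $\cB(A^T)$: the cell in $\cB(A)$ with type graph $B_T$ is matched with the cell in $\cB(A^T)$ whose underlying type graph is $B_T$ viewed with its two color classes swapped. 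This settles the poset isomorphism.

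To upgrade this combinatorial match to a piecewise linear bijection, I would define the map cell by cell. For each interior face $F_T$ of $\cS(A)$, the dual cells $C_T \subseteq \troptorus{d}$ and $C_{T^T}\subseteq \troptorus{n}$ are affine polyhedra of common dimension $c(B_T)-1$, and the Legendre transform of the piecewise affine convex function on $Q_{B_A}$ encoding the lift furnishes canonical affine identifications $C_T \leftrightarrow F_T^{\vee} \leftrightarrow C_{T^T}$. The main technical point --- and the step I expect to be the principal obstacle --- is checking that these per-cell affine identifications glue consistently across incident faces, so that they assemble into a global PL homeomorphism rather than merely a collection of cellwise affine maps. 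This gluing requires careful tracking of degenerate sectors arising from the infinite entries of $A$, which is precisely the feature distinguishing the Fink--Rinc\'on generalization from the classical full-support Develin--Sturmfels duality; I would model the verification on the arguments in \cite{fink2015stiefel}, following their systematic treatment of how sectors collapse under infinite apex coordinates.
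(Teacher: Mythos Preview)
Your approach differs from the paper's. The paper simply cites \cite[Proposition~6.2]{fink2015stiefel}, which constructs the piecewise linear bijection directly between the tropical cones generated by $A$ and $A^T$, and then notes that since the bounded complex is a subcomplex of the tropical cone, the bijection restricts. Your route through the identification $\cS(A)\cong\cS(A^T)$ is more explanatory: transposition corresponds to the linear isomorphism of $\RR^{d+n}$ swapping (and negating) the two blocks of coordinates, which carries the lifted root polytope for $A$ to that for $A^T$ and hence identifies the regular subdivisions; composing the two instances of Proposition~\ref{prop: mixedSubdiv} then yields the face-poset isomorphism between all of $\cC(A)$ and $\cC(A^T)$.

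The one step you assert without justification is that the bounded cells of $\cC(A)$ are exactly those dual to \emph{interior} faces of $\cS(A)$. This is true, but in the non-full-support setting it does not follow from anything the paper has established by this point, and it is less immediate than in the classical Develin--Sturmfels picture (where the bounded complex is $\tconv(A)$ and corresponds to types with no isolated left node). A clean way to fill this within the paper's framework is the recession-graph criterion of Proposition~\ref{prop:bounded+graph}: boundedness of $C_T$ depends only on the digraph $\bddgraph{T}{A}$, and swapping the color classes of $B_A$ reverses every edge orientation, which preserves strong connectivity. For the actual piecewise linear map you end up deferring to \cite{fink2015stiefel} anyway, so your argument does not avoid the citation; what it buys is a transparent combinatorial explanation of why the poset isomorphism holds.
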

\begin{example}
Consider the tropical matrix
\[
A = \begin{bmatrix} 0 & 0 \\ 0 & 1 \\ 0 & 2 \end{bmatrix} \, .
\]
The tropical hyperplane arrangements $\cH(A) \subseteq \troptorus{3}$ and $\cH(A^T)\subseteq \troptorus{2}$ are given in Figure~\ref{fig: bounded+duality}.
In particular, both bounded complexes consist of two edges and three vertices.
\end{example}
\begin{figure}
    \centering
    \includegraphics[scale=1.2]{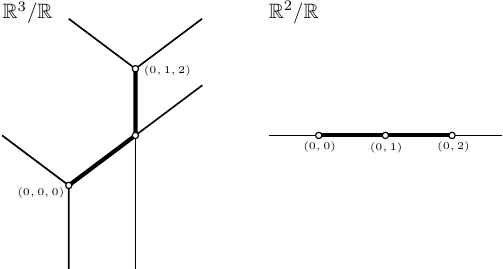}
    \caption{The tropical hyperplane arrangements $\cH(A)$ and $\cH(A^T)$. Their bounded complexes, shown in bold, are isomorphic.}
    \label{fig: bounded+duality}
\end{figure}

\subsection{Subdivisions of root polytopes and generalized permutohedra} \label{sec: permutohedra}
As discussed in \cite{fink2015stiefel}, an arrangement $\cH = \cH(A)$ of $n$ hyperplanes in $\troptorus{d}$ defined by a $d \times n$ tropical matrix $A$ gives rise to a regular subdivision of a root polytope associated to $A$.  An application of the Cayley trick then provides a regular mixed sudivision of a certain generalized permutoheron. For the case of hyperplanes with full support, this recovers the connection to subdivisions of products of simplices and mixed divisions of dilated simplices, as worked out in \cite{develin2004tropical}.  We first recall some definitions.

\begin{definition}[Root Polytopes]\label{def: rootPolytope} Let $B\subseteq K_{d,n}$ be a bipartite graph and denote by $E(B)$ its edge set. Define $Q_B$ to be the \emph{root polytope}
$$
Q_B = \conv\{{\bf e}_i-{\bf e}_{\bar{j}} \mid (i,j)\in E(B)\}\subset \RR^{d+n},
$$
where ${\bf e}_1,\dots, {\bf e}_d, {\bf e}_{\bar{1}},\dots, {\bf e}_{\bar{n}}$ are the coordinate vectors in $\RR^{d+n}$. 
\end{definition}

\begin{example}
A well-studied class of root polytopes arises when $B=K_{d,n}$ is a complete bipartite graph. In this case, the root polytope $Q_{K_{d,n}}$ is the product of simplices $\Delta_{d-1}\times \Delta_{n-1}$. For other bipartite graphs, $Q_B$ is the convex hull of some subset of vertices of $\Delta_{d-1}\times\Delta_{n-1}$.
\end{example}

The root polytope $Q_B$ of a bipartite graph $B \subset K_{d,n}$ is also closely related to another convex body obtained as a sum of (not necessarily full dimensional) simplices in ${\mathbb R}^d$.  Such polytopes are examples of the \emph{generalized permutohedra} studied by Postnikov in~\cite{postnikov2009permutohedra}.

\begin{definition}[Generalized permutohedra]\label{def: generalizedPermutohedron} Let $B\subseteq K_{d,n}$ be a bipartite graph and let $N_j(B)\subseteq [d]$ denote the set of neighbours of $j \in [n]$.
The \emph{generalized permutohedron} associated to $B$ is the polytope
$$
P_B = \sum_{j = 1}^n \Delta_{N_j(B)} \subset \RR^{d},
$$
where $\Delta_{N_j(B)}$ denotes the subsimplex of $\Delta_{d-1}$ whose vertices are labelled by $N_j(B)$.
\end{definition}

\begin{definition}[Mixed subdvision]
Suppose $P_B = \sum_{j = 1}^n \Delta_{N_j(B)} \subset \RR^{d}$ is a generalized permutohedron of dimension $k$.
A \emph{Minkowski cell} in this sum is a polytope of dimension $k$ of the form $\Delta_{I_1} + \cdots + \Delta_{I_n}$, where $I_j \subset N_j(B)$.
A \emph{mixed subdivision} $\cM$ of $P_B$ is a decomposition into a union of Minkowski cells, with the property that the intersection of any two cells is their common face.
\end{definition}

\begin{remark}
Our definition of a mixed subdivision of a sum of simplices is a special case of the corresponding concept for any Minkowski sum of polytopes. In the more general case, stricter conditions must be satisfied that are always satisfied for simplices; see~\cite{San02} for further details.
\end{remark}

The \emph{Cayley trick}~\cite{santos2005cayley} provides a geometric correspondence between the generalized permutohedron $P_B$ and the root polytope $Q_B$.  In particular we recover $P_B$ by intersecting $Q_B$ with the affine subspace $\RR^d \times \{-\sum_{j=1}^n {\bf e}_{\bar{j}}\}$.
Under this intersection, polyhedral subdivisions of $Q_B$ are in one-to-one correspondence with mixed subdivisions of $P_B$~\cite{huber2000cayley}.
In our context a $d \times n$ tropical matrix $A$ induces a \emph{regular subdivision} $\cS(A)$ of the root polytope $Q_{B_A}$ by lifting the vertex ${\bf e}_i-{\bf e}_{\bar{j}}$ to height $v_{ij}$ and projecting the lower faces of the resulting polytope back to $\RR^{d+n}$. Via the Cayley trick, this gives rise to a corresponding regular mixed subdivision $\cM(A)$ of the generalized permutohedron $P_{B_A}= \sum_{j=1}^n \Delta_{N_j(B_A)}\subseteq \RR^d$.

\begin{notation}
We say that a $d \times n$ tropical matrix $A$ is \emph{sufficiently generic} if the corresponding subdivision ${\mathcal S}(A)$ of the root polytope $Q_{B_A}$ is a triangulation.
\end{notation}

In \cite{develin2004tropical} it was shown that the tropical complex associated to an arrangement of tropical hyperplanes (with full support) is dual to the corresponding mixed subdivision of a dilated simplex. Furthermore, this duality is described by fine type data and descriptions as mixed cells. This was generalized to the setting of arbitrary tropical matrices by Fink and Rincon as follows.

\begin{prop}\label{prop: mixedSubdiv}\cite{fink2015stiefel}*{Proposition 4.1} Let $A$ be a $d \times n$ tropical matrix and let $\cH = \cH(A)$ denote the corresponding arrangement of tropical hyperplanes. Then the tropical complex $\cC(A)$ is dual to the mixed subdivision $\cM(A)$ of $P_{B_A}$. A face of $\cC(A)$ labelled by fine type $T \subseteq B_A$ is dual to the cell $\cM(A)$ obtained as the sum $\sum_{j=1}^n \Delta_{N_j(T)}$.
\end{prop}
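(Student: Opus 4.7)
The plan is to deduce the result as an extension of the Develin--Sturmfels duality for full-support tropical hyperplane arrangements by carefully tracking what happens as certain apex coordinates are sent to $+\infty$. In the full-support case (when $B_A = K_{d,n}$ and $A$ has all finite entries), one lifts the vertices of $\Delta_{d-1}\times\Delta_{n-1}$ using the entries of $A$, takes the lower envelope, and projects to obtain a regular subdivision of $\Delta_{d-1}\times\Delta_{n-1}$; via the Cayley trick this becomes $\cM(A)$, which is known to be dual to $\cC(A)$ with type $T$ corresponding to the mixed cell $\sum_{j=1}^n \Delta_{N_j(T)}$. The goal is to push this correspondence through the allowed degenerations.

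First, I would fix a point ${\bf p}\in\troptorus{d}$ of fine type $T$ and unravel the definition: $T_{ij}=1$ precisely when $p_i - a_{ij}$ attains the maximum $\max_{k\in\supp(H_j)}(p_k - a_{kj})$. In the lifted-polytope picture, the lower facet of the lifted $Q_{B_A}$ seen from direction ${\bf p}$ is spanned by exactly those vertices ${\bf e}_i - {\bf e}_{\bar\jmath}$ with $T_{ij}=1$, so the cell of $\cS(A)$ dual to the cell of $\cC(A)$ containing ${\bf p}$ has vertex set $\{{\bf e}_i - {\bf e}_{\bar\jmath} \mid T_{ij}=1\}$. Applying the Cayley trick to this subpolytope identifies, for each $j\in[n]$, the active indices $N_j(T)=\{i : T_{ij}=1\}\subseteq N_j(B_A)$, and the corresponding mixed cell is precisely $\sum_{j=1}^n \Delta_{N_j(T)}$.

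Next I would check the two structural claims needed for duality, namely (i) that this assignment is a bijection between cells of $\cC(A)$ and cells of $\cM(A)$ and (ii) that it is inclusion-reversing. Bijectivity follows because both complexes have their cells indexed by the same combinatorial data: Lemma~\ref{lem:type+facts} shows the faces of $\cC(A)$ are in bijection with the realizable type graphs $B_T\subseteq B_A$, while the Cayley trick together with the identification of $\cS(A)$ with the regular subdivision of $Q_{B_A}$ shows cells of $\cM(A)$ are in bijection with the same set of subgraphs. Inclusion-reversal follows from part (2) of Lemma~\ref{lem:type+facts} ($C_S\subseteq C_T \iff B_S\supseteq B_T$) together with the elementary observation $\sum_j \Delta_{N_j(S)} \supseteq \sum_j \Delta_{N_j(T)}$ iff $N_j(S)\supseteq N_j(T)$ for all $j$.

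The main obstacle is the limiting argument: one must verify that sending some $a_{ij}\to\infty$ cleanly removes the vertex ${\bf e}_i - {\bf e}_{\bar\jmath}$ from the supporting set of $\cS(A)$ while simultaneously excising the $i$-th sector of $H_j$, in a way that commutes with the duality correspondence. Concretely, I would argue that for any finite perturbation $A^{(\varepsilon)}$ with all entries finite, Develin--Sturmfels gives the duality, and the cells of $\cC(A^{(\varepsilon)})$ whose types $T^{(\varepsilon)}$ include some pair $(i,j)\notin \supp(A)$ recede to infinity as $\varepsilon\to 0$, while the remaining cells converge to the cells of $\cC(A)$; the dual mixed cells $\sum_j \Delta_{N_j(T^{(\varepsilon)})}$ either disappear or converge to $\sum_j \Delta_{N_j(T)}$ accordingly. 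This continuity argument, together with the bijection and poset claims above, establishes the proposition.
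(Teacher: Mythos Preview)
The paper does not give its own proof of this proposition: it is stated with a citation to Fink--Rinc\'on \cite{fink2015stiefel}*{Proposition 4.1} and no argument is supplied. So there is nothing to compare your proposal against in this paper.

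That said, your sketch is broadly reasonable, though it mixes two different strategies. Your steps identifying the face of $\cS(A)$ dual to the cell containing ${\bf p}$ directly via the lifting of $Q_{B_A}$ already work for arbitrary $B_A$: the root polytope $Q_{B_A}$ is defined for any bipartite graph, only finite entries are lifted, and the type data records exactly which lifted vertices lie on the selected lower face. If you make that argument precise (the relevant linear functional lives in $\RR^{d+n}$, not $\RR^d$, and you should say how ${\bf p}$ determines it), the separate limiting argument becomes unnecessary. One small caution on your inclusion-reversal step: the claim $\sum_j \Delta_{N_j(S)} \supseteq \sum_j \Delta_{N_j(T)} \iff N_j(S)\supseteq N_j(T)$ for all $j$ is false for Minkowski sums of simplices taken as bare polytopes; it holds here only because mixed cells in a mixed subdivision carry their Minkowski decomposition as part of the data, so you should phrase the containment at the level of labelled cells rather than underlying point sets.
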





We refer to Figure \ref{fig:my_label} for an illustration of this duality in the context of our running example.

\begin{figure}[h]
    \centering
    \includegraphics[width=0.9\textwidth]{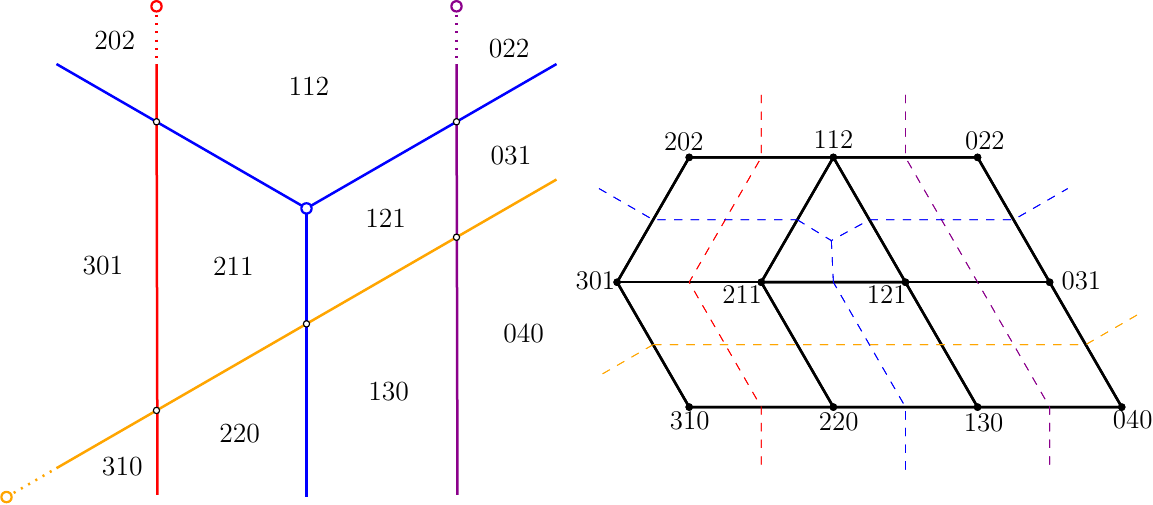}
    \caption{The tropical hyperplane arrangement ${\mathcal H}(A)$, where $A$ is from Example \ref{ex: running}, along with the corresponding mixed subdivision of a generalized permutohedron.}
    \label{fig:my_label}
\end{figure}

\section{The bounded complex and topology of types}\label{sec: boundedComplex}
In this section, we extend existing techniques in tropical convexity to derive new results regarding the bounded complex and the topology of type data associated to tropical hyperplane arrangements.
These results will be interpreted in the algebraic setting later in the paper.

\subsection{The recession cone and dimension of the bounded complex}

For the case of a tropical matrix $A$ with only finite entries, the bounded complex $\cB(A)$ can be recovered as the \emph{min}-tropical polytope $\tconv(A)$, i.e. the smallest tropically convex set containing the columns of $A$; see Definition~\ref{def: tropConvex}.
In this case there is a very simple combinatorial characterization for when a cell is contained in $\tconv(A)$: the cell $C_T$ is contained in $\cB(A)$ if and only if its type $T$ has no isolated left nodes~\cite{develin2004tropical}.

For a general tropical matrix $A$, this characterization of $\tconv(A)$ still holds (see~\cite{JoswigLoho:2016}), but if $A$ has infinite entries then the corresponding tropical polytope will have unbounded cells.
In particular, the bounded complex $\cB(A)$ will be a subcomplex of $\tconv(A)$, and so $T$ having no isolated left nodes becomes a necessary but not sufficient condition for $C_T$ being contained in $\cB(A)$.
In this section we give a precise combinatorial condition on $T$ that characterizes when $C_T$ is contained in $\cB(A)$ for a general tropical matrix $A$.

\begin{definition}\label{def: recession+graph}
Let $B \subset K_{d,n}$ be a finite simple bipartite graph and $S \subseteq B$ a subgraph.
We define the \emph{recession graph} $R = \bddgraph{S}{B}$ to be the directed bipartite graph with vertices $V(B) = [d] \sqcup [n]$ and with edges $e \in E(B)\setminus E(S)$ directed from $[d]$ to $[n]$, and $e' \in E(S)$ bidirected, i.e. directed from $[d]$ to $[n]$ and $[n]$ to $[d]$.

If $B = B_A$ for a tropical matrix $A$ and $S = B_T$ for some fine type $T$, we will also write $\bddgraph{T}{A}$ for the recession graph.
\end{definition}
See Example~\ref{ex: recession_graph} and Figure~\ref{fig: recession_graph} for an example of recession graphs arising from a tropical hyperplane arrangement. 

\begin{figure}
    \centering
    \includegraphics{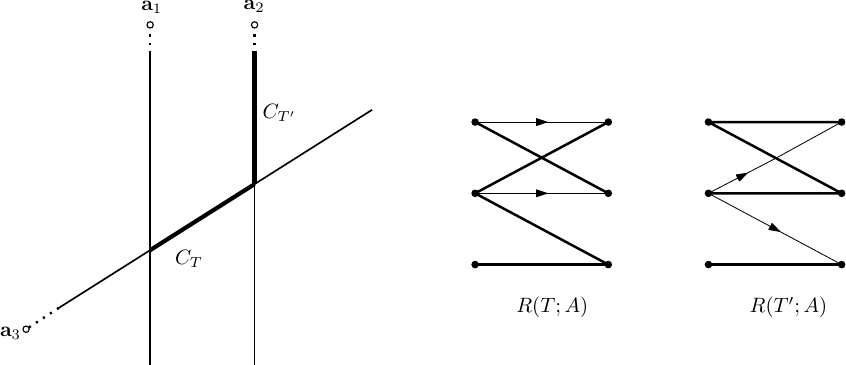}
    \caption{A tropical complex with two distinguished cells, $C_T, C_{T'}$, along with their corresponding recession graphs.
    Thin edges are directed only from $[d]$ to $[n]$, while thick edges are bidirected.}
    \label{fig: recession_graph}
\end{figure}

The recession graph is so named as it describes the \emph{recession cone} of a cell.
Recall that the recession cone of a polyhedron $P$ is the cone
\[
\rec(P) = \left\{y \in \RR^d \mid x + \lambda y \in P \; \forall x \in P, \lambda \geq 0\right\} \, ,
\]
which describes the directions in which $P$ is unbounded.
The following proposition shows the relation between the recession cone and the recession graph.
Recall that a digraph is \emph{strongly connected} if one can find a directed path from any vertex to any other vertex.

\begin{prop}\label{prop:bounded+graph} \cite{JoswigLoho:2016}
Let $A$ be a tropical matrix, and suppose $C_T$ is a cell in the tropical complex $\cC(A)$.
Then the dimension of the recession cone of $C_T$ in $\troptorus{d}$ is equal to the number of strongly connected components of $\bddgraph{T}{A}$, minus one.

In particular, the cell $C_T$ is contained in the bounded complex $\cB(A)$ if and only if the graph $\bddgraph{T}{A}$ is strongly connected.
\end{prop}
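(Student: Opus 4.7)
The plan is to describe the recession cone of $C_T$ explicitly by linear inequalities, recognize it as the order cone of an auxiliary digraph, and then match the strongly connected components of that digraph with those of $\bddgraph{T}{A}$. First I would unpack Definition \ref{def: cells}: a point $\mathbf{p} \in \troptorus{d}$ lies in the closed cell $C_T$ exactly when $T(\mathbf{p}) \supseteq T$, which by Definition \ref{def: fineType} translates into the system
\[
p_i - a_{ik} \;\geq\; p_j - a_{jk} \qquad \text{for every } (i,k) \in E(B_T),\ (j,k) \in E(B_A).
\]
Homogenizing gives the recession cone
\[
\operatorname{rec}(C_T) \;=\; \left\{\mathbf{y} \in \mathbb{R}^d : y_i \geq y_j \text{ whenever } \exists\, k \text{ with } (i,k) \in E(B_T),\ (j,k) \in E(B_A)\right\},
\]
which is precisely the order cone of the auxiliary digraph $D$ on vertex set $[d]$ whose arcs $j \to i$ are indexed by these pairs. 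A direct computation shows that the dimension of any such order cone equals the number of strongly connected components of the defining digraph, so $\dim \operatorname{rec}(C_T) = |\operatorname{scc}(D)|$ in $\mathbb{R}^d$; since $\mathbb{R}\mathbf{1}$ lies in the lineality space, the recession dimension in $\troptorus{d} = \mathbb{R}^d/\mathbb{R}\mathbf{1}$ is $|\operatorname{scc}(D)|-1$.

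Next I would identify $|\operatorname{scc}(D)|$ with $|\operatorname{scc}(\bddgraph{T}{A})|$. By construction, each arc $j \to i$ of $D$ corresponds to a length-two walk $j \to k \to i$ in $\bddgraph{T}{A}$ traversing a $B_A$-edge followed by a (bidirected) $T$-edge; since $\bddgraph{T}{A}$ is bipartite, any directed walk between two vertices of $[d]$ has even length and decomposes into such chunks, so the restriction of the SCC partition of $\bddgraph{T}{A}$ to $[d]$ coincides with that of $D$. To see that no SCC of $\bddgraph{T}{A}$ is disjoint from $[d]$, recall that each column of a valid fine type has a nonzero entry: for every $k \in [n]$ there exists $i \in [d]$ with $(i,k) \in E(B_T)$, yielding a $2$-cycle $i \leftrightarrow k$ that absorbs $k$ into a left vertex's component.

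The bounded cell characterization is then immediate: $C_T$ lies in $\cB(A)$ exactly when $\dim \operatorname{rec}(C_T) = 0$ in $\troptorus{d}$, which by the formula above is equivalent to $|\operatorname{scc}(\bddgraph{T}{A})| = 1$, i.e., to $\bddgraph{T}{A}$ being strongly connected. The main obstacle is the bookkeeping in the second paragraph: ensuring that the bipartite recession graph and the monopartite auxiliary $D$ have the same SCC count relies on the ``no orphan column'' property of fine types, and it is this subtlety that makes $\bddgraph{T}{A}$ rather than any monopartite contraction the natural object for detecting boundedness.
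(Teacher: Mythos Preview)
Your argument is correct and essentially self-contained. The paper's proof takes a different route: rather than working directly in $\mathbb{R}^d$, it invokes the weighted digraph polyhedron framework of Joswig--Loho, lifting $C_T$ via an affine isomorphism to a polyhedron in $\troptorus{d+n}$ whose recession cone is exactly $Q(\bddgraph{T}{A})$, and then cites their result that the dimension of such a polyhedron is the number of strongly connected components minus one. Your approach instead stays in $\mathbb{R}^d$, recognizes the recession cone as the order cone of a monopartite digraph $D$ on $[d]$, and then does the extra combinatorial step of matching the strongly connected components of $D$ with those of the bipartite graph $\bddgraph{T}{A}$ via the ``no orphan column'' property of type tables. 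The paper's route is shorter given the external machinery and makes transparent why the bipartite recession graph is the natural invariant; your route is more elementary and avoids any black-box citations, at the cost of the additional bookkeeping in your second paragraph.
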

\begin{proof}
This can be deduced from a number of results from~\cite{JoswigLoho:2016} via weighted digraph polyhedra, all results cited within this proof are contained there.
By Lemma 7 and Proposition 30, the cell $C_T$ is affinely isomorphic to a weighted digraph polyhedron.
In particular, the recession cone of $C_T$ is the weighted digraph polyhedron $Q(\bddgraph{T}{A})$.
By Proposition 9, the dimension of $Q$ in $\troptorus{d+n}$ is equal to the number of strongly connected components of $\bddgraph{T}{A}$ minus one, which extends to $C_T$ in $\troptorus{d}$ by affine isomorphism.
Therefore $C_T$ is bounded if and only if $\bddgraph{T}{A}$ is strongly connected.
\end{proof}

\begin{example}\label{ex: recession_graph}
Consider the tropical complex depicted in Figure~\ref{fig: recession_graph}, along with the recession graphs for the cells $C_T$ and $C_{T'}$.
The cell $C_{T}$ is bounded, as reflected by the fact that the recession graph $\bddgraph{T}{A}$ is strongly connected.
The cell $C_{T'}$ is unbounded, since in the recession graph $\bddgraph{T'}{A}$, the edge $(2,3)$ can only be traversed from left to right. Hence there is no path from left node $3$ to any other left node that respects edge orientation.
\end{example}

We get the following as an immediate corollary.

\begin{cor}\label{prop: bdd_complex_nonempty}
The bounded complex $\cB(A)$ is non-empty if and only if $B_A$ is connected.
\end{cor}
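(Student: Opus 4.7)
My plan is to establish both directions as immediate consequences of Proposition \ref{prop:bounded+graph}, which characterizes boundedness of a cell $C_T$ in terms of strong connectivity of the recession graph $\bddgraph{T}{A}$.

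For the direction $\cB(A) \neq \emptyset \Rightarrow B_A$ connected, I would take any cell $C_T \in \cB(A)$ and apply Proposition \ref{prop:bounded+graph} to obtain strong connectivity of $\bddgraph{T}{A}$. By the construction of the recession graph, every edge of $B_A$ appears in $\bddgraph{T}{A}$ (either as a directed edge from $[d]$ to $[n]$ or as a bidirected edge), so the underlying undirected graph of $\bddgraph{T}{A}$ is precisely $B_A$. Since the underlying undirected graph of any strongly connected digraph is connected, $B_A$ is connected.

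For the reverse direction, assuming $B_A$ is connected, my strategy is to exhibit a type $T$ whose recession graph is strongly connected. The natural candidate is a vertex (i.e.\ a $0$-dimensional cell) of the tropical complex $\cC(A)$: by Lemma \ref{lem:type+facts}(3), such a vertex $C_T$ satisfies $c(B_T) - 1 = 0$, so $B_T$ is a connected subgraph containing every vertex of $V(B_A) = [d] \sqcup [n]$ as a non-isolated node. Since the bidirected edges of $\bddgraph{T}{A}$ are precisely $E(B_T)$, these edges alone already form a strongly connected spanning digraph, yielding strong connectivity of $\bddgraph{T}{A}$ and therefore $C_T \in \cB(A)$.

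The main obstacle is thus showing that $\cC(A)$ contains a vertex whenever $B_A$ is connected. I would establish this through the Cayley-trick duality with the mixed subdivision $\cM(A)$ of the generalized permutohedron $P_{B_A}$ (Proposition \ref{prop: mixedSubdiv}): connectivity of $B_A$ guarantees that $P_{B_A}$ is $(d-1)$-dimensional in its affine hull, and so $\cM(A)$ must contain a top-dimensional mixed cell, which is dual to a vertex of $\cC(A)$. An alternative, more hands-on route is to fix a spanning tree $\tau$ of $B_A$ and solve the linear system $p_i - a_{ik} = p_j - a_{jk}$ over all pairs of tree-neighbors $(i,k), (j,k) \in \tau$ to produce a candidate point $p^* \in \troptorus{d}$ whose realized type $T(p^*)$ contains $\tau$, and hence has connected spanning support in $B_A$. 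Either approach demands some care in checking the relevant dimension and sector-compatibility conditions; the duality argument sidesteps these by pushing the work onto the well-understood dimension theory of generalized permutohedra.
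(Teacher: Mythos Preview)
Your proposal is correct and follows essentially the same route as the paper. The reverse direction is identical in spirit: both you and the paper use the duality of Proposition~\ref{prop: mixedSubdiv} to deduce that $\cC(A)$ contains a $0$-cell when $B_A$ is connected (the paper phrases this via full-dimensionality of $Q_{B_A}$ citing \cite[Lemma 12.5]{postnikov2009permutohedra}, you via the dimension of $P_{B_A}$, but these are equivalent through the Cayley trick). Your verification that a $0$-cell lies in $\cB(A)$ via the recession graph is correct but more than needed---the paper simply observes that a $0$-cell is a point and hence trivially bounded.

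For the forward direction your argument is actually cleaner than the paper's: you apply Proposition~\ref{prop:bounded+graph} directly to an arbitrary bounded cell and note that the underlying undirected graph of $\bddgraph{T}{A}$ is $B_A$ itself, so strong connectivity forces $B_A$ connected. The paper instead first reduces to a $0$-cell (since any bounded cell contains one) and then uses that its type graph $B_T$ is a connected spanning subgraph of $B_A$. Both work; yours avoids the extra reduction step.
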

\begin{proof}
The bounded complex is non-empty if and only if it contains a 0-cell, as any bounded $k$-cell must contain a 0-cell.
By Lemma~\ref{lem:type+facts}, a cell $C_T$ is zero dimensional if and only if $B_T\subseteq B_A$ is connected.
This immediately shows $\cB(A)$ non-empty implies $B_A$ is connected.

Conversely if $B_A$ is connected, the root polytope $Q_{B_A}$ is full-dimensional~\cite{postnikov2009permutohedra}*{Lemma 12.5} and the maximal cells of its regular subdivision $\cS(A)$ correspond to 0-cells of $\cC(A)$ by Proposition~\ref{prop: mixedSubdiv}.
As $\cC(A)$ contains a 0-cell, the bounded complex must be non-empty.
\end{proof}

Inspired by these observations we define the following invariant of a bipartite graph.  Recall that we use $c(B)$ to denote the number of connected components of a graph $B$.

\begin{definition}\label{def: recession}
For a bipartite graph $B$ define the \emph{recession connectivity} of $B$ to be the integer
\[
\lambda(B) = \max_{S \subseteq B}\left\{c(S) \mid \bddgraph{S}{B} \text{ is strongly connected } \right\} \, .
\]
\end{definition}

\begin{remark}\label{rem: lambdamatching}
Note that for any bipartite graph $B$ we have 
\[\lambda(B) \leq \mat(B),\]
\noindent
where $\mat(B)$ denotes the \emph{matching number} of $B$. Indeed, any $S \subset B$ realizing $\lambda(B)$ contains a matching of size $c(S)$.
\end{remark}

\begin{prop} \label{prop: bounded+complex+dimension}
Let $B$ be a finite simple connected bipartite graph.
For any tropical matrix $A$ satisfying $B = B_A$, we have $\dim(\cB(A)) \leq \lambda(B_A) - 1.$
Furthermore, there exists a generic tropical matrix $A$ with $\dim(\cB(A)) = \lambda(B_A) - 1$.
\end{prop}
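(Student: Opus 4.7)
The upper bound follows by packaging earlier results: for any bounded cell $C_T$ of $\cC(A)$, Proposition~\ref{prop:bounded+graph} yields that $\bddgraph{T}{A}$ is strongly connected, so $B_T$ is admissible in the maximization defining $\lambda(B)$ and thus $c(B_T) \leq \lambda(B)$. Combined with $\dim C_T = c(B_T) - 1$ from Lemma~\ref{lem:type+facts}, this gives $\dim \cB(A) \leq \lambda(B) - 1$.

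For equality, the plan is to build a sufficiently generic $A$ realizing an extremizer. Choose $S^* \subseteq B$ with $c(S^*) = \lambda(B)$ and $\bddgraph{S^*}{B}$ strongly connected. I first reduce to the case that $S^*$ is a forest: if $S^*$ contains a cycle (bipartite, hence of even length) and $e = (i,j)$ is one of its edges, then deleting $e$ converts its arc in the recession graph from bidirected to the single $[d] \to [n]$ arc, losing only the arc $j \to i$. The other edges of the cycle remain in $S^* \setminus e$ and stay bidirected, so the complementary bipartite path around the cycle still supplies a directed walk from $j$ back to $i$, showing $\bddgraph{S^* \setminus e}{B}$ is strongly connected. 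Since $e$ was on a cycle, its removal preserves both the vertex set and the component count of $S^*$. Iterating, I may take $S^*$ to be a forest. Now the vectors $\{\mathbf{e}_i - \mathbf{e}_{\bar j} : (i,j) \in S^*\}$ are affinely independent and span a simplex $\sigma_{S^*} \subset Q_B$. Assign heights $h_{ij} = 0$ on $E(S^*)$ and $h_{ij} = 1$ on $E(B) \setminus E(S^*)$; the zero affine functional certifies $\sigma_{S^*}$ as a face of the regular subdivision $\cS(h)$. Then perturb to $a_{ij} = h_{ij} + \epsilon_{ij}$ for small generic $\epsilon_{ij}$: for $\epsilon$ small enough the certificate re-interpolates on the affinely independent set of vertices of $\sigma_{S^*}$ and remains strictly below $a$ at every other vertex of $Q_B$, so $\sigma_{S^*}$ persists as a face of $\cS(A)$, while generic $\epsilon$ ensures $\cS(A)$ is a triangulation. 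By Proposition~\ref{prop: mixedSubdiv}, $\sigma_{S^*}$ is dual to a cell $C_T$ of $\cC(A)$ with $B_T = S^*$, so Lemma~\ref{lem:type+facts} gives $\dim C_T = c(S^*) - 1 = \lambda(B) - 1$, and Proposition~\ref{prop:bounded+graph} shows $C_T \subseteq \cB(A)$.

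The main obstacle is the simultaneous calibration of smallness and genericity of $\epsilon$: the perturbation must be small enough that the affine certificate for $\sigma_{S^*}$ survives, yet generic enough to refine all non-simplicial faces of $\cS(h)$ into a triangulation. Both are standard facts about regular subdivisions but must be combined carefully here. The combinatorial heart of the argument is the forest reduction, where it is essential that bipartite cycles are even so that cycle deletion neither disconnects $S^*$ nor isolates any of its vertices.
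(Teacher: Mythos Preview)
Your proposal is correct and follows essentially the same approach as the paper's proof: the upper bound is deduced from Proposition~\ref{prop:bounded+graph} and Lemma~\ref{lem:type+facts}, and for equality one chooses a forest $S$ realizing $\lambda(B)$ and constructs $A$ so that $Q_S$ appears as a simplex of a regular triangulation of $Q_B$, with the corresponding dual cell $C_T$ in $\cB(A)$ having the desired dimension. The only cosmetic difference is that the paper sets $a_{ij}=0$ on $S$ and ``sufficiently large generic'' on $B\setminus S$ in one step, whereas you split this into a $0/1$ height function followed by a small generic perturbation---both yield the same conclusion.
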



\begin{proof}
The first part of this statement is an immediate corollary of Proposition~\ref{prop:bounded+graph}, as the dimension of any bounded cell is bounded above by $\lambda(B_A) - 1$.
For the second part, let $S \subseteq B$ be a subgraph realizing $\lambda(B)$, i.e. $\bddgraph{S}{B}$ is strongly connected, and has $\lambda(B)$ connected components.
We can assume that $S$ is a forest, as removing an arbitrary edge from any cycle in $S$ does not strongly disconnect $\bddgraph{S}{B}$ nor does it affect the number of connected components of $S$.

It remains to show that we can find a tropical matrix $A$ and fine type $T$ such that $B = B_A$ and $S = B_T$.
We define the matrix $A$ by setting $a_{ij} = 0$ for all $(i,j) \in S$, sufficiently large generic values for $(i,j) \in B \setminus S$, and $\infty$ otherwise.
Then $B = B_A$, and the regular subdivision of $Q_B$ induced by $A$ is a triangulation that contains the cell $Q_{S}$.
After applying the Cayley trick, $Q_{S}$ corresponds to a mixed cell dual to the cell $C_T$ in the tropical complex.
By Proposition~\ref{prop:bounded+graph}, $C_T$ is contained in the bounded complex, and its dimension is $\lambda(B) - 1$.
Furthermore, as $\lambda(B) - 1$ is the maximum possible dimension of a bounded cell, this shows $\dim(\cB(A)) = \lambda(B) -1$.
\end{proof}

\begin{lemma}\label{lem: recbipartite}
For the complete bipartite graph $K_{d,n}$ we have $\lambda(K_{d,n}) = \min\{d,n\}$.
\end{lemma}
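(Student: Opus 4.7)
The plan is to establish matching upper and lower bounds on $\lambda(K_{d,n})$. Assume without loss of generality that $d \le n$, so that $\min\{d,n\} = d$; the definition of $\bddgraph{\cdot}{\cdot}$ is symmetric in the two sides of the bipartition in $K_{d,n}$ up to reversing all one-way edges, which preserves strong connectivity, so this reduction is harmless. The upper bound is then immediate from Remark \ref{rem: lambdamatching}: $\lambda(K_{d,n}) \le \mat(K_{d,n}) = d$.

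For the lower bound I would exhibit a subgraph $S \subseteq K_{d,n}$ with $c(S) = d$ for which $\bddgraph{S}{K_{d,n}}$ is strongly connected. A natural candidate is the star forest obtained by partitioning $[n]$ into $d$ nonempty blocks $N_1, \dots, N_d$ and setting $S = \{(i,j) : i \in [d],\, j \in N_i\}$; that is, $S$ is a disjoint union of $d$ stars, one centered at each left vertex. By construction $c(S) = d$ and every vertex of $[d] \sqcup [n]$ is incident to some edge of $S$.

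Strong connectivity of $\bddgraph{S}{K_{d,n}}$ then follows easily: any $i \in [d]$ reaches any $j \in [n]$ directly along a (possibly non-$S$) edge of $K_{d,n}$, while any $j \in [n]$ reaches the center of its star in $[d]$ via the unique bidirected $S$-edge incident to it, after which it can proceed anywhere. Combining this with the matching upper bound yields $\lambda(K_{d,n}) = d = \min\{d,n\}$. No step presents a real obstacle; the only subtlety is ensuring that $c(S)$ is counted consistently with the convention underlying Remark \ref{rem: lambdamatching}, which is handled here by guaranteeing that every vertex of $K_{d,n}$ meets $S$ so that isolated vertices play no role.
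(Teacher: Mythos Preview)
Your proof is correct and follows essentially the same approach as the paper's: assume $d\le n$, bound $\lambda(K_{d,n})$ above by $d$, and exhibit a spanning star forest $S$ with $d$ components whose recession graph is strongly connected. The paper's explicit choice (the matching $v_iw_i$ for $i<d$ together with a star at $v_d$ absorbing $w_d,\dots,w_n$) is just a particular instance of your partition construction, and your invocation of Remark~\ref{rem: lambdamatching} for the upper bound is equivalent to the paper's ``it's clear'' step.
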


\begin{proof}
Without loss of generality assume that $d \leq n$ and denote the vertices of $B = K_{d,n}$ as $\{v_1, \dots, v_d\} \cup \{w_1, \dots, w_n\}$.  It is clear that $\lambda(K_{d,n}) \leq d$. For the reverse inequality, define $S$ to be the graph with edges $v_1w_1, \dots, v_dw_d, v_dw_{d+1}, \dots v_dw_n$. Then $S$ has $d$ connected components and $R(S;B)$ is strongly connected.  The result follows.
\end{proof}

The following lemma shows that $\lambda$ is monotone on subgraphs, and will be useful at numerous points.
\begin{lemma}\label{lem: lambda+monotone}
Let $B$ be a finite simple connected bipartite graph and $B' \subseteq B$ a connected subgraph.
Then $\lambda(B') \leq \lambda(B)$.
\end{lemma}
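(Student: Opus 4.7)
The plan is to take a subgraph $S' \subseteq B'$ witnessing $\lambda(B')$ (so that $\bddgraph{S'}{B'}$ is strongly connected with $c(S') = \lambda(B')$) and construct a subgraph $S \subseteq B$ with $\bddgraph{S}{B}$ strongly connected and $c(S) \geq \lambda(B')$; this will give $\lambda(B) \geq \lambda(B')$. If $V(B') = V(B)$, one can simply take $S = S'$: passing from $\bddgraph{S'}{B'}$ to $\bddgraph{S'}{B}$ only adds further left-to-right directed edges (those coming from $B \setminus B'$), and introducing such one-way edges cannot destroy strong connectivity, while the component count is unchanged.

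In the general case where $U := V(B) \setminus V(B') \neq \emptyset$, the main obstacle is that every edge of $B$ incident to a vertex $u \in U$ lies in $B \setminus B'$, so in $\bddgraph{S'}{B}$ these edges are all one-way; thus $u$ can only reach $V(B')$ or only be reached from $V(B')$, breaking strong connectivity. To remedy this I would augment $S'$ by a set $F \subseteq E(B) \setminus E(B')$ of edges that ``cover'' $U$ with bidirected arcs. Explicitly, fix any spanning tree $T'$ of $B'$ and extend it (using connectivity of $B$) to a spanning tree $T$ of $B$, then set $F := E(T) \setminus E(T')$. Each edge of $F$ is necessarily incident to some vertex of $U$, hence lies outside $B'$, and $|F| = |U|$.

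Setting $S := S' \cup F$, strong connectivity of $\bddgraph{S}{B}$ should follow from two observations: the vertices of $V(B')$ are mutually reachable via the embedded sub-digraph $\bddgraph{S'}{B'} \subseteq \bddgraph{S}{B}$, and every $u \in U$ is linked bidirectionally to some vertex of $V(B')$ along the unique path from $u$ into $V(B')$ in the tree $T' \cup F$ — all edges of this path lie in $F \subseteq S$ (since $T'$ is contained in $V(B')$) and are hence bidirected in $\bddgraph{S}{B}$. For the component count, viewing $S'$ as a subgraph of $B$ gives $\lambda(B') + |U|$ components (one extra isolated-vertex component for each element of $U$), and adding the $|F| = |U|$ edges of $F$ reduces this count by at most $|U|$, yielding $c(S) \geq \lambda(B')$. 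The main delicacy is reconciling the two vertex-set conventions when counting components of $S'$ ``inside $B'$'' versus ``inside $B$'', but otherwise the argument reduces to this bookkeeping together with the tree-extension step.
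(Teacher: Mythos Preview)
Your proposal is correct and follows essentially the same approach as the paper: both arguments choose a spanning tree $T'$ of $B'$, extend it to a spanning tree $T$ of $B$, and set $S = S' \cup (E(T)\setminus E(T'))$; strong connectivity is then verified by the same case analysis. The one genuine difference is the component count: the paper first reduces $S'$ to a forest contained in $T'$ and then argues via a tree--cycle contradiction that no two components of $S'$ get merged, whereas your counting argument (viewing $S'$ in $B$ as having $\lambda(B')+|U|$ components and noting that the $|U|$ added edges can destroy at most $|U|$ of them) is shorter and does not require the forest reduction or the inclusion $S' \subseteq T'$.
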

\begin{proof}
Let $S' \subseteq B'$ be a graph realizing $\lambda(B')$ i.e. $c(S') = \lambda(B')$ and $\bddgraph{S'}{B'}$ is strongly connected.
Note that we can assume $S'$ is a forest: removing an edge from any cycle in $S'$ does not disconnect $\bddgraph{S'}{B'}$ as we can traverse the cycle in the opposite direction.
Let $T_{B'}$ be a spanning tree of $B'$ that contains $S'$.
We then extend this to $T_B$, a spanning tree of $B$ that contains $T_{B'}$.
We define $S \subseteq B$ as the graph
\[
S = S' \cup \left(T_B \setminus T_{B'}\right) \, .
\]
We claim that $\bddgraph{S}{B}$ is strongly connected and that $c(S) \geq c(S')$.

We first show strong connectivity.
Consider $v, v' \in V(\bddgraph{S}{B})$.
If $v, v' \in V(B')$, the same path in $\bddgraph{S'}{B'}$ connects them in $\bddgraph{S}{B}$.
If $v, v' \in V(B \setminus B')$, the edges of $T_B \setminus T_{B'}$ give a path between them.
If $v \in V(B \setminus B'), v \in V(B')$, the edges of $T_B \setminus T_{B'}$ give a path into $B'$, where we can use the same path in $\bddgraph{S'}{B'}$.

Finally we show that $c(S) \geq c(S')$.
Let $S' = S_1 \sqcup \dots \sqcup S_k$ be the decomposition of $S^\prime$ into connected components.
Suppose that there exist some set of edges $E \subseteq T_B \setminus T_{B'}$ that connect two components $S_i$ and $S_j$.
As $T_{B'}$ is a spanning tree, there exist edges $E' \subseteq T_{B'}$ that connect $S_i, S_j$ that are disjoint from $E$.
This implies $S_i \cup S_j \cup E \cup E'$ contains a cycle, and is contained in $T_B$, a contradiction as $T_B$ a tree.
Therefore $S$ has at least as many components as $S'$.
\end{proof}

From the previous two lemmas we observe that if $B \subset K_{d,n}$ is any connected bipartite graph then $\lambda(B) \leq \min\{d,n\}$.


\subsection{Topology of types}\label{sec:type+topology}
In this section, we investigate topological properties of certain subsets of $\troptorus{d}$ induced by a tropical hyperplane arrangement. 
In particular, our goal is to show that these subsets are contractible, which will be central to applications to (co)cellular resolutions in Section \ref{sec: cellRes}. Note that none of our results in this section require $\cH$ to be generic nor have full support. The results here mimic those in \cite{dochtermann2012tropical}*{Section 2.3}, but require some additional considerations to account for not necessarily full support.

We begin by recalling the notion of tropical convexity. It was established in \cite{develin2004tropical}*{Theorem 2} that tropically convex sets are contractible.

\begin{definition}[Tropical convexity]\label{def: tropConvex} A subset $S$ of $\RR^d$ is \textit{tropically convex} if the set $S$ contains the point ${\bf p} = c\odot {\bf x} \oplus d\odot {\bf y}$ for all ${\bf x},{\bf y} \in S$ and $c,d\in\RR$.
\end{definition}

If the distinction is important, we shall specify that a set is \emph{$\max$-tropically convex} (resp. $\min$) if the tropical addition operation is $\max$ (resp. $\min$).
Note that all tropically convex sets contain $\RR\one$, therefore we often view them as subsets of $\troptorus{d}$.
When we refer to a tropically convex set $S$ in $\troptorus{d}$, we mean that $S + \RR\one$ is tropically convex; recall that $\oplus$ is not well defined on $\troptorus{d}$.

Given a tropical matrix $A = (\aa_1, \dots, \aa_n)$ with entries in $\Tmin$, we can define the \emph{$\min$-tropical polytope} $\tconv(A)$ to be the set of points of $\RR^d$ obtainable as a $\min$-tropically convex combination of columns of $A$, i.e.
\[
\tconv(A) = \left\{\pp \in \RR^d \mid \exists \lambda_i \in \RR \text{ s.t. } \pp = \bigoplus_{i=1}^n \lambda_i \odot \aa_i \right\} \, .
\]

\begin{notation}\label{not: typesPartialOrder} For two fine types $T, T' \in \{0,1\}^{n\times d}$, we write $T\leq T'$ if $T_{ij} \leq T'_{ij}$ for all $i,j$. This is equivalent to saying that the bipartite graph $B_T$ is a subgraph of $B_{T'}$. We let $\min(T,T')$ and $\max(T,T')$ denote the table with entries given by the component-wise minimum and maximum, respectively.
Equivalently, $B_{\min(T,T')}:= B_T \cap B_{T'}$ is the largest subgraph of $B_A$ contained in both $T$ and $T'$, and $B_{\max(T,T')}:= B_T \cup B_{T'}$ is the smallest subgraph of $B_A$ containing both $T$ and $T'$.
\end{notation}

The following theorem is the main theorem of this subsection. It says that down-sets of cells partially ordered by (either fine or coarse) type are tropically convex and hence contractible.

\begin{theorem}\label{thm: contractible} Let $A$ be a $d\times n$ tropical matrix and let $\cH = \cH(A)$ denote the corresponding arrangement of $n$ tropical hyperplanes in $\troptorus{d}$.
Let $U\in \{0,1\}^{n\times d}$ and $\uu\in \NN^d$.
With labels determined by fine (resp. coarse) type, the following subsets of $\troptorus{d}$ are max-tropically convex and hence contractible:
\begin{align*}
    &(\cC(A), T)_{\leq U} &\coloneqq \qquad&\{\pp\in \troptorus{d} \mid T_\cH(\pp)\leq U\} &= \qquad &\bigcup \{C\in \cC(A) \mid T_\cH(C) \leq U\} \\
    &(\cC(A), \bft)_{\leq \uu} &\coloneqq \qquad &\{\pp\in \troptorus{d} \mid \bft_{\cH}(\pp) \leq \uu\} &= \qquad &\bigcup \{C\in \cC(A) \mid \bft_\cH(C)\leq \uu\}.
\end{align*}
Analogously, the following subsets of $\troptorus{d}$ with labels determined by fine (respectively, coarse) cotype are min-tropically convex and hence:
\begin{align*}
    &(\cC(A), \overline{T})_{\leq U} &\coloneqq \qquad&\{\pp\in \troptorus{d} \mid \overline{T}_\cH(\pp)\leq U\} &= \qquad &\bigcup \; \{C\in \cC(A) \mid \overline{T}_\cH(C) \leq U\} \\
    &(\cC(A), \overline\bft)_{\leq \uu} &\coloneqq \qquad &\{\pp\in \troptorus{d} \mid \overline\bft_{\cH}(\pp) \leq \uu\} &= \qquad &\bigcup \; \{C\in \cC(A) \mid \overline\bft_\cH(C)\leq \uu\}.
\end{align*}
\end{theorem}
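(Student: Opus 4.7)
The plan is to prove the four claimed sublevel-set identities are tropically convex in the indicated semiring (max for the type sets, min for the cotype sets), and then invoke \cite{develin2004tropical}*{Theorem 2}, which guarantees that tropically convex subsets of $\troptorus{d}$ are contractible. Both descriptions given of each sublevel set -- as a locus of points with bounded (co)type and as a union of cells -- agree by the definition of $\cC(A)$, so it suffices to work pointwise. Hence the entire argument reduces to a single identity of the form ``(co)type is sub-additive under tropical convex combinations.''

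For the \emph{fine type} set $(\cC(A), T)_{\leq U}$, I will show the stronger statement that for any $\pp, \qq \in \troptorus{d}$ and $c, d \in \RR$ the point $\rr = c \odot \pp \oplus d \odot \qq$ (max-tropical combination) satisfies
\[
T_\cH(\rr)_{ik} \leq \max\bigl(T_\cH(\pp)_{ik},\ T_\cH(\qq)_{ik}\bigr)
\qquad \text{for all } (i,k) \in \supp(A).
\]
The argument is direct: if $T_\cH(\rr)_{ik} = 1$ then $r_i - a_{ik} \geq r_j - a_{jk}$ for every $j$ in the support of $H_k$, and $r_i = \max(p_i + c, q_i + d)$ is achieved by one of the two terms, say $p_i + c$. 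Combining these with $r_j \geq p_j + c$ gives $p_i - a_{ik} \geq p_j - a_{jk}$, so $\pp \in S_i(\aa_k)$ and $T_\cH(\pp)_{ik} = 1$. Applied to $U$ this immediately gives $T_\cH(\rr) \leq U$.

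For the \emph{coarse type} set, the above pointwise inequality is not quite enough because the $t_i$ are row sums; instead I will partition $[d]$ into $P = \{i : r_i = p_i + c\}$ and $Q = \{i : r_i = q_i + d\}$ (with ties placed in both). The case analysis from the previous paragraph shows that for $i \in P$ every $k$ with $T_\cH(\rr)_{ik} = 1$ also satisfies $T_\cH(\pp)_{ik} = 1$, hence $t_i(\rr) \leq t_i(\pp)$; symmetrically $t_i(\rr) \leq t_i(\qq)$ for $i \in Q$. Since $P \cup Q = [d]$, this yields $\bft_\cH(\rr)_i \leq \max(\bft_\cH(\pp)_i, \bft_\cH(\qq)_i) \leq u_i$ coordinate-wise, establishing max-tropical convexity.

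The two cotype cases are handled by dualizing all arguments to the min-tropical semiring. Setting now $\rr = c \odot_{\min} \pp \oplus_{\min} d \odot_{\min} \qq$, i.e.\ $r_i = \min(p_i + c, q_i + d)$, the crucial observation is that taking a coordinate-wise minimum preserves sector membership: if both $\pp$ and $\qq$ lie in $S_i(\aa_k)$ then so does $\rr$, because $\min$ is order-preserving in each argument. This gives $\overline T_\cH(\rr)_{ik} \leq \overline T_\cH(\pp)_{ik} \vee \overline T_\cH(\qq)_{ik}$ for every $(i,k) \in \supp(A)$, and the coordinates outside the support are identically zero by the very definition of cotype (Definition \ref{def: fineType}), so the not-full-support caveat causes no issue. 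The coarse cotype case then mirrors the coarse type case verbatim, partitioning $[d]$ by which of $p_i + c,\ q_i + d$ realizes $r_i$. The main technical subtlety throughout is precisely this handling of $(i,k) \notin \supp(A)$ and of the ``tie'' coordinates that lie in $P \cap Q$ -- but in both cases the relevant inequalities hold unconditionally, so the argument is uniform. Once all four sets are shown to be tropically convex in the correct semiring, contractibility follows immediately from \cite{develin2004tropical}*{Theorem 2} (whose proof goes through unchanged for min-tropical convexity by the isomorphism between $\Tmin$ and $\Tmax$).
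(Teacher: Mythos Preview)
Your proposal is correct and follows essentially the same approach as the paper: the paper isolates the key pointwise inequality as a separate lemma (Lemma~\ref{lem: maxTropConvex}), proving $T_\cH(\rr) \leq \max(T_\cH(\pp), T_\cH(\qq))$ for max-tropical combinations by the same case split on which of $p_i+c$, $q_i+d$ achieves $r_i$, then handles cotypes by the dual min-tropical argument and contractibility via \cite{develin2004tropical}*{Theorem~2}. Your partition argument for the coarse case is slightly more explicit than the paper's terse ``follows from the definition of coarse types,'' but it is the same reasoning unpacked.
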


The following lemma gives the first step towards proving Theorem \ref{thm: contractible}.

\begin{lemma}\label{lem: maxTropConvex}  Let $A$ be a $d\times n$ tropical matrix and let $\cH = \cH(A)$ denote the corresponding arrangement of $n$ tropical hyperplanes in $\troptorus{d}$, and let $\pp,\qq\in\troptorus{d}$. Then
$$
\min(T_\cH(\pp),T_\cH(\qq)) \leq T_\cH({\mathbf r}) \leq \max(T_\cH(\pp), T_\cH(\qq))
$$
and
$$
\bft_\cH({\mathbf r})\leq \max(\bft_\cH(\pp),\bft_\cH(\qq))
$$
for $r\in\tconv^{\max}\{\pp,\qq\}$ on the max-tropical line segment between $\pp$ and $\qq$.
\end{lemma}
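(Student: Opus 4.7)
The plan is to unpack the definition of the max-tropical line segment and read off inequalities from the sector-defining conditions directly. Any ${\mathbf r} \in \tconv^{\max}\{\pp,\qq\}$ has the form ${\mathbf r} = c \odot \pp \oplus d \odot \qq$ for some $c, d \in \RR$, so componentwise $r_j = \max(c + p_j,\, d + q_j)$ for each $j \in [d]$. All three claims will come from manipulating this formula against the sector inequalities $p_i - a_{ik} \geq p_j - a_{jk}$ (and similarly for $\qq$, ${\mathbf r}$). I would fix an arbitrary $(i,k) \in \supp(\cH)$ throughout and argue on a per-hyperplane basis.

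For the lower bound $\min(T_\cH(\pp), T_\cH(\qq)) \leq T_\cH({\mathbf r})$, I would assume $T_\cH(\pp)_{ik} = T_\cH(\qq)_{ik} = 1$, so both sector inequalities hold. Adding $c$ (resp. $d$) to each side and using that the max of two quantities dominates each, I get
\[
r_i - a_{ik} = \max(c + p_i - a_{ik},\, d + q_i - a_{ik}) \geq \max(c + p_j - a_{jk},\, d + q_j - a_{jk}) = r_j - a_{jk}
\]
for every $j$, giving $T_\cH({\mathbf r})_{ik} = 1$. For the upper bound $T_\cH({\mathbf r}) \leq \max(T_\cH(\pp), T_\cH(\qq))$, suppose $T_\cH({\mathbf r})_{ik} = 1$ and pick the term achieving $r_i$; without loss of generality say $r_i = c + p_i$. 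Then for every $j$,
\[
c + p_i - a_{ik} = r_i - a_{ik} \geq r_j - a_{jk} \geq c + p_j - a_{jk},
\]
so $p_i - a_{ik} \geq p_j - a_{jk}$ and $T_\cH(\pp)_{ik} = 1$. The symmetric case gives $T_\cH(\qq)_{ik} = 1$, so in either case $\max(T_\cH(\pp), T_\cH(\qq))_{ik} = 1$.

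The coarse inequality $\bft_\cH({\mathbf r}) \leq \max(\bft_\cH(\pp), \bft_\cH(\qq))$ is the most subtle point, since a naive componentwise argument would only give the weaker bound $\bft_\cH({\mathbf r})_i \leq \bft_\cH(\pp)_i + \bft_\cH(\qq)_i$. The key observation, which I would highlight, is that once we fix a left coordinate $i$, the decision of which of $c + p_i$ or $d + q_i$ realizes $r_i$ depends only on $i$ and not on the hyperplane index $k$. Thus if $r_i = c + p_i$, the refined argument from the upper-bound step shows that every $k$ with $T_\cH({\mathbf r})_{ik} = 1$ automatically satisfies $T_\cH(\pp)_{ik} = 1$, so summing over $k$ yields $\bft_\cH({\mathbf r})_i \leq \bft_\cH(\pp)_i$; symmetrically $\bft_\cH({\mathbf r})_i \leq \bft_\cH(\qq)_i$ in the other case. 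Either way $\bft_\cH({\mathbf r})_i \leq \max(\bft_\cH(\pp)_i, \bft_\cH(\qq)_i)$, completing the proof. The main conceptual obstacle is exactly this uniformity-in-$k$ point; once it is isolated the computation is short.
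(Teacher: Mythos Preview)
Your proof is correct and follows essentially the same approach as the paper: both unpack the formula ${\mathbf r} = \max(c+\pp,\, d+\qq)$ and read the sector inequalities off directly, with the upper bound obtained by fixing the index $i$ at which $r_i$ is realised by (say) $c+p_i$ and deducing $\pp \in S_i(\aa_k)$. Your treatment of the coarse-type bound is actually more explicit than the paper's, which simply asserts that it ``follows from the definition of coarse types''; your observation that the choice between $c+p_i$ and $d+q_i$ is uniform in the hyperplane index $k$ is exactly the point that makes this work and is implicit in the paper's argument structure.
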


\begin{proof} 
To allow us to take tropical sums of points, we will prove the result in $\RR^d$.
Recall that hyperplane sectors and tropically convex sets are closed under scalar addition, and so the result will also hold in $\troptorus{d}$.

Let ${\mathbf r} = (\lambda \odot \pp) \oplus (\mu \odot \qq) = \max\{\lambda + \pp, \mu+\qq\}$ for $\lambda, \mu\in \RR$, and fix some $k\in[d]$. Without loss of generality, assume that $r_k = \lambda+p_k \geq \mu + q_k$.

First we show that $\min(T_\cH(\pp),T_\cH(\qq)) \leq T_\cH({\mathbf r})$. If at least one of the points $\pp$ and $\qq$ are not in the $k$th sector of some hyperplane $H({\mathbf v}_i)$, then $\min(T_\cH(\pp)_{ki},T_\cH(\qq)_{ki}) = 0$ and certainly $T_\cH({\mathbf r})_{ki}\geq 0$.
Now suppose that both $\pp$ and $\qq$ are in the $k$th sector for some hyperplane $H(\vv)$, so $p_k-p_i \geq v_k-v_i$ and $q_k - q_i \geq v_k - v_i$ for all $i\in\supp(H(\vv))$. Fix some $j\in [d]$. If $r_j = \lambda + p_j \geq \mu + q_j$, then $r_j - r_k = p_j - p_k$ and $r$ is in the $k$th sector of $H(\vv)$; similarly, if $r_j = \mu + q_j \geq \lambda + p_j$, then $r_j - r_k = q_j - q_k$ and $r$ again lies in the $k$th sector of $H(\vv)$.

Now we show that $T_\cH({\mathbf r}) \leq \max(T_\cH(\pp), T_\cH(\qq))$. We show that if $r$ is in the $k$th sector of some hyperplane $H(\vv)$, then at least one of $\pp$ and $\qq$ must be, too. Assume that $r_k - r_j \geq v_k - v_j$ for all $j\in [d]$, and also assume without loss of generality that $r_k = \lambda+p_k \geq \mu + q_k$. Then $\lambda + p_k \geq v_k - v_j + r_j$ for all $j\in [d]$. We also know that $r_j \geq \lambda + p_j$ by the definition of $r$, so $\lambda + p_k \geq v_k - v_j + \lambda + p_j$. This implies that $p_k - p_j \geq v_k - v_j$, so $\pp$ is in sector $k$ of the hyperplane $H(\vv)$.

The last claim follows from the definition of coarse types.
\end{proof}

With this lemma in hand, we may now prove the main theorem of this subsection.

\begin{proof}[Proof of Theorem \ref{thm: contractible}] The max-tropical convexity of the sets $(\cC(A),T)_{\leq U}$ and $(\cC(A),\bft)_{\leq \uu}$ follows from Lemma~\ref{lem: maxTropConvex}.
Reasoning similar to that of the proof of Lemma~\ref{lem: maxTropConvex} shows that if ${\mathbf r}$ is a point in the $\min$-tropical line segment between $\pp$ and $\qq$, then $T_\cH({\mathbf r}) \geq \min(T_\cH(\pp), T_\cH(\qq))$; passing to complements, we obtain that $\overline T_{\cH}({\mathbf r}) \leq \max(\overline{T}_\cH(\pp), \overline{T}_\cH(\qq))$ and $\overline \bft_{\cH}({\mathbf r}) \leq \max(\overline{\bft}_\cH(\pp), \overline{\bft}_\cH(\qq))$.
This gives the claim for cotypes.

\end{proof}

We will also need to extend Theorem~\ref{thm: contractible} to downsets of cotypes restricted to the bounded complex.

\begin{prop} \label{prop: bounded+contractible}
The following subsets of $\troptorus{d}$ with labels determined by fine (respectively, coarse) cotype are contractible: 
\begin{align*}
    (\cB(A), \overline{T})_{\leq U} \quad &\coloneqq \quad \bigcup \; \{C\in \cB(A) \mid \overline{T}_\cH(C) \leq U\} \\
    (\cB(A), \overline\bft)_{\leq \uu} \quad &\coloneqq \quad \bigcup \; \{C\in \cB(A) \mid \overline\bft_\cH(C)\leq \uu\}.
\end{align*}
\end{prop}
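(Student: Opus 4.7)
The plan is to prove that $(\cB(A), \overline{T})_{\leq U}$ and $(\cB(A), \overline{\bft})_{\leq \uu}$ are min-tropically convex subsets of $\troptorus{d}$ and hence contractible, directly mirroring the strategy of Theorem \ref{thm: contractible}. The new ingredient is the verification that min-tropical segments between points of $\cB(A)$ remain inside $\cB(A)$.

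Fix $p, q$ in $(\cB(A), \overline{T})_{\leq U}$, and let $r = \min(\lambda \odot p, \mu \odot q)$ be an arbitrary point on the min-tropical segment between them. The cotype inequality $\overline{T}_\cH(r) \leq \max(\overline{T}_\cH(p), \overline{T}_\cH(q)) \leq U$ established in the proof of Theorem \ref{thm: contractible} shows that $r$ lies in $(\cC(A), \overline{T})_{\leq U}$, and the coarse version follows identically. Moreover, since $p, q \in \cB(A) \subseteq \tconv(A)$ and the min-tropical polytope $\tconv(A)$ is min-tropically convex by definition, we have $r \in \tconv(A)$. It therefore remains to verify that the cell $C_{T_r}$ containing $r$ is bounded.

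By Proposition \ref{prop:bounded+graph}, boundedness of $C_{T_r}$ is equivalent to strong connectivity of the recession graph $\bddgraph{T_r}{A}$, and this strong connectivity is monotone in $T_r$: enlarging $S$ in $\bddgraph{S}{A}$ turns unidirected edges into bidirected ones, which can only help strong connectivity. Together with the type bound $T_r \geq \min(T_p, T_q)$ from the min-tropical analogue of Lemma \ref{lem: maxTropConvex} and the cotype bound above, the task reduces to a combinatorial analysis of recession graphs: given that $\bddgraph{T_p}{A}$ and $\bddgraph{T_q}{A}$ are each strongly connected, show that the combined constraints force strong connectivity of $\bddgraph{T_r}{A}$.

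The main obstacle is precisely this last step, since strong connectivity of $\bddgraph{\min(T_p, T_q)}{A}$ need not follow from strong connectivity of $\bddgraph{T_p}{A}$ and $\bddgraph{T_q}{A}$ individually. I expect the resolution to come from the interplay between the type lower bound and the cotype upper bound $\overline{T}_\cH(r) \leq \max(\overline{T}_\cH(p), \overline{T}_\cH(q))$, which together constrain $T_r$ enough to guarantee sufficiently many bidirected edges in $\bddgraph{T_r}{A}$ to inherit cycle structure from both $\bddgraph{T_p}{A}$ and $\bddgraph{T_q}{A}$. The coarse cotype case is handled by the identical argument applied to the analogous min-tropically convex set.
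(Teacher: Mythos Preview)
Your proposal has a genuine gap, and you have essentially identified it yourself: you never establish that the min-tropical segment between two points of $\cB(A)$ stays inside $\cB(A)$. You reduce this to a combinatorial claim about recession graphs and then write ``I expect the resolution to come from the interplay\ldots'', but no argument follows. The constraints you have available, namely $T_r \geq \min(T_p,T_q)$ and $\overline{T}_r \leq \max(\overline{T}_p,\overline{T}_q)$, are equivalent to $\min(T_p,T_q) \leq T_r \leq \max(T_p,T_q)$, and this alone does not force strong connectivity of $\bddgraph{T_r}{A}$ from strong connectivity of $\bddgraph{T_p}{A}$ and $\bddgraph{T_q}{A}$. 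In fact, the paper explicitly records the question of whether $\cB(A)$ is min-tropically convex as open (see the question immediately following the proof); your approach, taken with $U$ large, would answer it.

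The paper sidesteps this entirely. Rather than proving min-tropical convexity of the bounded downset, it realizes $(\cC(A),\overline{T})_{\leq U}$ as a min-tropical polytope $\tconv(V)$ for a finite $V$, and then invokes the envelope construction: $\tconv(V)$ is the projection of a polyhedron $\mathcal{E}(V)\subset\RR^d\times\RR^{|V|}$, and this projection restricts to an affine isomorphism on bounded cells. Since $\mathcal{E}(V)$ is pointed (having at least one bounded cell), its bounded part is contractible by a standard polytope argument, and the affine isomorphism transfers this to $(\cB(A),\overline{T})_{\leq U}$. The point is that contractibility of the bounded subcomplex of a pointed polyhedron does not require tropical convexity at all.
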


\begin{proof}
We prove the theorem for downsets of fine cotypes, the proof for coarse cotypes is identical.
If $(\cB(A), \overline{T})_{\leq U}$ is empty then we are done, so assume it is non-empty.

We first note that every cell $C \subseteq \cC(A)$ is a (min)-tropical polytope.
To see this, observe that we can write $C_T$ as the solution set of the linear inequalities:
\[
\{x_i - a_{ik} \geq x_j - a_{jk} \, \mid \, T_{ik} = 1 \, , \, a_{ik}, a_{jk} \neq \infty \} .
\]
Each of these inequalities can also be written as min-tropical inequalities, and so $C_T$ is the solution set of finitely many min-tropical inequalities.
By the Tropical Minkowski-Weyl Theorem~\cite{JoswigBook22}*{Theorem 7.11}, this implies that $C_T$ is a min-tropical polytope.

For each cell $C \in (\cC(A), \overline{T})_{\leq U}$, let $V_C$ denote a finite generating set such that $C = \tconv(V_C)$ and $V = \bigcup V_C$ the union of these finite generating sets.
For any $p \in (\cC(A), \overline{T})_{\leq U}$, we can write $p$ as a min-tropical convex combination of elements of $V$, and so $(\cC(A), \overline{T})_{\leq U} \subseteq \tconv(V)$.
However, the set $(\cC(A), \overline{T})_{\leq U}$ is min-tropically convex itself by Theorem~\ref{thm: contractible}, and so $(\cC(A), \overline{T})_{\leq U} = \tconv(V)$.

Letting $m = |V|$, one can view $V$ as a $d \times m$ tropical matrix.
By~\cite{JoswigLoho:2016}*{Theorem 33}, $\tconv(V)$ is obtainable as the upper hull of the \textit{envelope} of the tropical matrix $V$ (see also \cite{JoswigBook22}*{Section 6.1})
\[
    \mathcal{E}(V) = \{(y,z) \in \RR^d \times \RR^m \mid y_i - z_j \leq v_{ij}\}
\]
orthogonally projected onto $\RR^d$. 
Moreover, this projection is an affine isomorphism when restricted to the bounded cells of $\mathcal{E}(V)$ and $\tconv(V)$ respectively.
Note that the bounded cells of $\tconv(V)$ are precisely $(\cB(A), \overline{T})_{\leq U}$.

As $(\cB(A), \overline{T})_{\leq U}$ is non-empty, $\tconv(V)$ has at least one bounded cell and therefore so does $\mathcal{E}(V)$.
This makes $\mathcal{E}(V)$ a pointed polyhedron, a polyhedron with no lineality space.
By~\cite{JoswigBook22}*{Example 10.54}, $\mathcal{E}(V)$ is projectively equivalent to a polytope with a distinguished \emph{face at infinity}
and the set of faces of $\mathcal{E}(V)$ that are disjoint from the face at infinity are precisely the bounded cells of $\mathcal{E}(V)$.  Therefore by~\cite{MillerSturmfels:2005}*{Theorem 4.17} the bounded part of $\mathcal{E}(V)$ is contractible .
As this is affinely isomorphic to $(\cB(A), \overline{T})_{\leq U}$, this implies that $(\cB(A), \overline{T})_{\leq U}$ is also contractible.
\end{proof}

The proof of Proposition~\ref{prop: bounded+contractible} is much cleaner for arrangements with full support.
When all entries of $A$ are finite, the bounded complex $\cB(A)$ agrees with the tropical polytope $\tconv(A)$ and hence is $\min$-tropically convex.
This immediately implies $(\cB(A), \overline{T})_{\leq U}$ is $\min$-tropically convex as an intersection of $\min$-tropically convex sets, and therefore contractible.
When $A$ has infinite entries, $\cB(A)$ is only a subset of $\tconv(A)$ and no longer tropically convex in general.
Figure~\ref{fig: bounded+complex+nonconvex} shows an example of such a complex consisting of two maximal cells.
For any pair of points in the interior of these two cells, the tropical line segment between them must leave the bounded complex.

\begin{figure}
    \centering
    \includegraphics[scale=1.2]{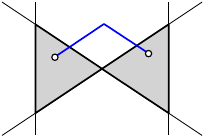}
    \caption{An arrangement of four degenerate tropical hyperplanes whose bounded complex is not tropically convex.
    The tropical line segment (blue) between any two points in the interior of different maximal cells must leave the bounded complex at some point.}
    \label{fig: bounded+complex+nonconvex}
\end{figure}

\section{Ideals and resolutions from tropical arrangements}\label{sec: cellRes}
In \cite{dochtermann2012tropical} the authors introduce monomial ideals arising from the type data associated to a tropical hyperplane arrangement, and prove that the facial structure of the various tropical complexes describes algebraic relations (syzygies) among the generators.
In this section we extend these constructions and results to the case of tropical hyperplanes with not necessarily full support. In particular, we show that the cellular complexes from Definition \ref{def: cells} support (co-)cellular resolutions of certain ideals arising naturally from the combinatorics of the arrangement.

\subsection{Ideals from the arrangement}
The ideals that we study will live in polynomial rings defined by the finite entries of a tropical matrix.

\begin{notation}\label{not: cellRings}
Suppose $A$ is a $d \times n$ tropical matrix.  We let $X_A$ denote the $d \times n$ generic matrix with entries 
\begin{align*}
    X_A (i,j) = \begin{cases}
    x_{i,j} &\text{ if } (i,j)\in\supp(A)\\
    0 &\text{ otherwise}
    \end{cases}
\end{align*}
and for a field $\kk$ define $\widetilde S = \kk[X_A]$ to be the polynomial ring in the indeterminates in $X_A$.  Note that if $B = B_A$ is the bipartite graph associated to $A$, then the number of indeterminates of $\widetilde S$ is given by $|E(B)|$, the number of edges of $B$. We let $S$ denote the polynomial ring $S = \kk[x_1,\dots, x_d]$.

Let the weight of the variable $x_{ij}$ in $\widetilde S$ be $v_{ij}$ and the weight of a monomial $\xx^\aa = \prod x_{ij}^{a_{ij}}\in \widetilde S$ be $\sum a_{ij} v_{ij}$.
The initial form $\inn_A(f)$ of a polynomial $f = \sum c_i \xx^{\aa_i}$ is defined to be the sum of terms $c_i \xx^\aa_i$ such that $\xx^\aa_i$ has maximal weight.

\end{notation}

The fine and coarse (co)-type data from Definitions \ref{def: fineType} and \ref{def: coarseType} naturally give rise to monomial ideals. These ideals were first introduced in the full-support case in \cite{dochtermann2012tropical} as a means of giving a tropical analogue of the \emph{oriented matroid ideals} studied by Novik, Postnikov, and Sturmfels in \cite{NPS}.

\begin{definition}[Type/cotype ideals]\label{def: typeIdeals} Let $A$ be a $d \times n$ tropical matrix and let $\cH = \cH(A)$ denote the corresponding arrangement of $n$ tropical hyperplanes in $\troptorus{d}$. The \emph{fine type ideal} and \emph{fine cotype ideal} associated to $\cH$ are the squarefree monomial ideals
\begin{align*}
I_{T(\cH)} &= \langle \xx^{T({\bf p})}\mid {\bf p}\in \troptorus{d}\rangle \subset \widetilde S\\
I_{\overline{T}(\cH)} &= \langle \xx^{\overline{T}({\bf p})}\mid {\bf p} \in \troptorus{d}\rangle \subset \widetilde S
\end{align*}
where $\xx^{T({\bf p})} = \displaystyle \prod \{x_{ij}\mid T({\bf p})_{ij} = 1\}$. Analogously, the \emph{coarse type ideal} and \emph{coarse cotype ideal} associated to $\cH$ are given by
\begin{align*}
    I_{\bft(\cH)} &= \langle \xx^{\bft({\bf p})}\mid {\bf p}\in \troptorus{d} \rangle \subset S\\
I_{\overline{\bft}(\cH)} &= \langle \xx^{\overline{\bft}({\bf p})}\mid {\bf p}\in \troptorus{d}\rangle \subset S
\end{align*}
where $\xx^{\bft({\bf p})} = x_1^{t_1}\dots x_n^{t_n}$ with $\bft({\bf p}) = (t_1,\dots, t_d)$.
\end{definition}

We will see in the following sections that an arrangement ${\mathcal H}$ gives rise to complexes that support minimal free resolutions of these ideals. This provides a means to understand the projective dimension and regularity of our type ideals.  Later in the paper we will use our techniques to obtain a formula for the Krull dimension as well, see Section \ref{sec: Krull}.

\subsection{Cellular resolutions of type and cotype ideals}
Next we show that the tropical complex and bounded complex of a tropical hyperplane arrangement support minimal (co)cellular resolutions of the various type and cotype ideals. This provides a tropical analogue of the work of \cite{NPS} and generalizes results from \cite{block2006tropical} and \cite{dochtermann2012tropical}. 
For a reference on cellular resolutions we refer the reader to \cite{MillerSturmfels:2005}*{Chapter 4}; for details on cocellular resolutions, see \cite{MillerSturmfels:2005}*{Section 5.3}. The main criteria we use in this section to check whether a resolution is cellular is \cite{MillerSturmfels:2005}*{Prop. 4.5}, and the analagous co-cellular version which follows from the same proof techniques.

\begin{prop}\label{prop: typesLabelCells} 
Let $A$ be a $d \times n$ tropical matrix and let $\cH = \cH(A)$ denote the corresponding arrangement of $n$ tropical hyperplanes in $\troptorus{d}$. For every cell $C\in \cC(A)$ of codimension at least $1$, 
\begin{align*}
    T_\cH(C) &= \max\{T_\cH(D)\mid C\subset D\}, \text{ and }\\
    \bft_\cH(C) &= \max\{\bft_\cH(D)\mid C\subset D\}.
\end{align*}
Therefore, both the fine type and the coarse type yield a monomial colabeling for the complex $\cC(A)$.

Similarly, for every cell $D\in \cC(A)$ of dimension $\geq 1$ we have
\begin{align*}
    \overline{T}_\cH(D) &= \max\{\overline{T}_\cH(C)\mid C\subset D\}, \text{ and }\\
    \overline{\bft}_\cH(D) &= \max\{\overline{\bft}_\cH(C)\mid C\subset D\}.
\end{align*}
Thus, both the fine and coarse cotypes yield monomial labelings for the complex $\cC(A)$.
\end{prop}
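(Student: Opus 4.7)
My plan hinges on the combinatorial dictionary of Lemma~\ref{lem:type+facts}: face containment $C \subset D$ in $\cC(A)$ corresponds to reverse subgraph containment $B_{T(C)} \supseteq B_{T(D)}$, which gives $T(D) \leq T(C)$ and $\overline{T}(C) \leq \overline{T}(D)$ entrywise. Consequently the inequalities
\[
T_\cH(C) \geq \max\{T_\cH(D) \mid C \subset D\} \qquad \text{and} \qquad \overline{T}_\cH(D) \geq \max\{\overline{T}_\cH(C) \mid C \subset D\}
\]
(together with their coarse analogues) are immediate. The substantive task is to establish the reverse inequalities.

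For the fine type colabeling, I fix a cell $C$ of codimension $\geq 1$ with type $T := T_\cH(C)$ and an edge $(i_0, k_0) \in B_T$; my goal is to produce a cell $D \supsetneq C$ with $(i_0,k_0) \in B_{T(D)}$. Taking a point ${\bf p}$ in the relative interior of $C$, I plan to construct a perturbation direction ${\bf v}$ so that ${\bf q} := {\bf p} + t{\bf v}$ satisfies both (i) ${\bf q} \in S_{i_0}({\bf a}_{k_0})$ and (ii) ${\bf q} \notin C$. Condition (i) translates to the linear inequalities $v_{i_0} \geq v_j$ for $j \in N_{k_0}(B_T)$, cutting out a full-dimensional polyhedral cone in $\troptorus{d}$; condition (ii) asks that ${\bf v}$ avoid the tangent space of $C$ at ${\bf p}$, which is a proper subspace since $C$ has codimension at least one. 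Hence the interior of the cone contains a direction meeting both requirements, and the perturbed point ${\bf q}$ lies in a cell $D \supsetneq C$ with $T_\cH(D)_{i_0, k_0} = 1$. For the coarse type the same argument works row-by-row by enlarging the cone to preserve all entries of row $i$ of $T$ simultaneously; the enlarged cone remains full-dimensional.

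For the fine cotype labeling, fix $D$ of dimension $\geq 1$ with $(i_0,k_0) \in B_{\overline{T}_\cH(D)}$. The hypothesis tells us that the relative interior of $D$ avoids the closed convex sector $S_{i_0}({\bf a}_{k_0})$, so $D \not\subset S_{i_0}({\bf a}_{k_0})$. For bounded $D$, writing $D = \conv(\text{vertices of } D)$ and invoking convexity of $S_{i_0}({\bf a}_{k_0})$ forces some vertex $C \subsetneq D$ to lie outside the sector, giving $\overline{T}_\cH(C)_{i_0, k_0} = 1$; the coarse cotype labeling then follows by applying this row-wise. The main obstacle I anticipate is the case of unbounded $D$: the convex-hull argument must be supplemented by an analysis of the recession directions of $D$ via the recession graph $\bddgraph{T_\cH(D)}{A}$ from Proposition~\ref{prop:bounded+graph}, and I expect to conclude by arguing that if all extreme points and extreme rays of $D$ lay inside $S_{i_0}({\bf a}_{k_0})$ then $D$ itself would be contained in the sector, a contradiction.
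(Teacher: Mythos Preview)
Your argument for the type colabeling is essentially the paper's own: both pick a perturbation direction in the full-dimensional cone $\{v : v_{i_0} \geq v_j\}$ (the recession cone of $S_{i_0}({\bf a}_{k_0})$) and invoke the codimension hypothesis on $C$ to ensure such a direction leaves $C$. The paper phrases this as ``$C + Q_i$ meets the star of $C$''; your formulation via a cone avoiding the tangent space of $C$ is the same mechanism, and your coarse-type variant works because $Q_i$ is independent of $k$, so one perturbation preserves membership in $S_i({\bf a}_k)$ for every column $k$ at once.

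For the cotype labeling you take a different route---the convex-hull/extreme-face structure of $D$---where the paper only gestures at a ``similar'' perturbation. Your bounded case is correct. The unbounded case has a real gap: the contradiction you set up yields either a vertex of $D$ outside the sector (done) or a recession direction $r$ not in $\rec\bigl(S_{i_0}({\bf a}_{k_0})\bigr)$, but a direction is not a proper face, and you do not explain how to pass from $r$ to a face $C \subsetneq D$ with $\overline{T}_\cH(C)_{i_0,k_0} = 1$. Worse, the equality (read with strict inclusion $C \subsetneq D$) can actually fail for unbounded cells: with $d = 2$, $n = 1$, and full support, the half-line $D = S_1({\bf a}_1)$ has the apex as its sole proper face, which lies in both sectors and hence has cotype zero, while $\overline{T}_\cH(D)_{2,1} = 1$. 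This is harmless for the applications, since Proposition~\ref{prop: cotypeCellRes} only uses the cotype labeling on the bounded complex $\cB(A)$, where your convex-hull argument is already complete; but the recession-ray analysis you sketch cannot rescue the unbounded case as stated.
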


The proof is essentially identical to that of \cite{dochtermann2012tropical}*{Propositions 3.4 and 3.8}.



\begin{prop}\label{prop: cocellularRes} Let $A$ be a $d \times n$ tropical matrix and let $\cH = \cH(A)$ denote the corresponding tropical hyperplane arrangement in $\troptorus{d}$ with tropical complex $\cC(A)$. Then with labels given by fine (respectively coarse) type, the labeled complex $\cC(A)$ supports a minimal cocellular resolution of the fine (respectively coarse) type ideal $I_{T_\cH}$ (respectively $I_{\bft(\cH)})$.
\end{prop}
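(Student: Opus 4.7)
The plan is to verify the cocellular resolution criterion (Lemma~\ref{lem:cocellular_criteria}) for the tropical complex $\cC(A)$ equipped with its fine (respectively coarse) type colabeling. Proposition~\ref{prop: typesLabelCells} already guarantees that these assignments are genuine colabelings, so the cocellular complex $\cF^\bullet_{\cC(A)}$ is well-defined. What remains is to check three things: that the ideal of maximal-cell labels matches $I_{T(\cH)}$ (respectively $I_{\bft(\cH)}$); that $\cC(A)_{\le \b}$ is acyclic for every multidegree $\b$; and that distinct faces of $\cC(A)$ carry distinct labels.

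For the first point, every $p \in \troptorus{d}$ lies in the relative interior of some cell $C$, and $C$ is in turn a face of some maximal (top-dimensional) cell $D$. The colabeling inequality gives $T_\cH(C) \ge T_\cH(D)$, so $\xx^{T_\cH(D)}$ divides $\xx^{T_\cH(p)}$; hence the ideal generated by all type monomials coincides with the ideal generated by maximal-cell labels. The coarse case is identical after taking row sums.

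The acyclicity step is the substantive one, and the entire point of Section~\ref{sec:type+topology} was to prepare for it. All labels in the fine case are squarefree, so we may reduce to $\b$ being a $\{0,1\}$-matrix $U$. As a topological subspace of $\troptorus{d}$, the collection of relatively open cells $C$ with $T_\cH(C) \le U$ coincides with $\{p \in \troptorus{d} \mid T_\cH(p) \le U\}$; by Theorem~\ref{thm: contractible} this set is max-tropically convex and hence contractible, so it is $\kk$-acyclic. The coarse-type case is handled identically, using the contractibility statement for $(\cC(A), \bft)_{\le \uu}$ in the same theorem.

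Minimality follows from Lemma~\ref{lem:type+facts}(2): a strict face containment $G \subsetneq H$ corresponds to a strict graph containment $B_{T_\cH(G)} \supsetneq B_{T_\cH(H)}$, so the fine-type labels already differ, and any additional edge contributes at least $+1$ to some row sum of the coarse type, so the coarse labels differ as well. The main obstacle in executing this plan is really just the acyclicity input, and the heavy lifting for that has already been done in Theorem~\ref{thm: contractible}; beyond that, the proof is a direct verification of the Miller--Sturmfels criterion, in the spirit of~\cite{dochtermann2012tropical}*{Section 3}.
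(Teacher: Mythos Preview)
Your argument is correct and follows essentially the same route as the paper: invoke Proposition~\ref{prop: typesLabelCells} for the colabeling, Theorem~\ref{thm: contractible} for the acyclicity of downsets, and Lemma~\ref{lem:cocellular_criteria} to conclude. Your write-up is somewhat more detailed than the paper's---you explicitly check that maximal-cell labels generate the type ideal and you derive minimality from Lemma~\ref{lem:type+facts}(2) rather than citing \cite{develin2004tropical}---but the underlying strategy is identical.
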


\begin{proof}
Proposition \ref{prop: typesLabelCells} implies that the polyhedral complex $\cC(A)$ is colabeled by both fine and coarse types. Combining Theorem \ref{thm: contractible} and a cocellular version of the criterion in \cite{MillerSturmfels:2005}*{Prop. 4.5} gives the result. Minimality follows from the fact that if $C$ is strictly contained in $D$, then $T_\cH(D) < T_\cH(C)$; see e.g. \cite{develin2004tropical}*{Corollary 13}.
\end{proof}

In a similar argument to that above, one can combine Proposition \ref{prop: typesLabelCells}, Proposition~\ref{prop: bounded+contractible}, and \cite{MillerSturmfels:2005}*{Prop. 4.5} to obtain the following result.

\begin{prop}\label{prop: cotypeCellRes} Suppose $A$ is a $d \times n$ tropical matrix, let $\cH = \cH(A)$ denote the corresponding arrangement of tropical hyperplanes, and let $\cB(A)$ be the subcomplex of bounded cells of $\cC(A)$ (if it exists). Then $\cB(A)$, with labels given by the fine (resp. coarse) cotype, supports a minimal cellular resolution of the fine (resp. coarse) cotype ideal.
\end{prop}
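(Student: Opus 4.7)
I would follow the same template as the proof of Proposition \ref{prop: cocellularRes}, adapted from cocellular to cellular resolutions and restricted to the bounded subcomplex. The first step is to confirm that fine and coarse cotype give valid monomial labelings of $\cB(A)$: Proposition \ref{prop: typesLabelCells} provides the labeling relation $\overline{T}_\cH(D) = \max\{\overline{T}_\cH(C) : C \subset D\}$ for cells of positive dimension in $\cC(A)$, and since every face of a bounded cell is itself bounded, this labeling restricts verbatim to $\cB(A)$. The analogous statement for coarse cotypes is immediate.

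Next I would apply the cellular resolution criterion of Lemma \ref{lem:cellular_criteria}. For $\bb \in \ZZ^{|E(B_A)|}$ (respectively $\bb \in \ZZ^d$ in the coarse case), the subcomplex $\cB(A)_{\leq \bb}$ consisting of cells whose cotype monomial divides $\xx^\bb$ coincides with the downset $(\cB(A), \overline{T})_{\leq U}$ (resp. $(\cB(A), \overline{\bft})_{\leq \uu}$), where $U$ and $\uu$ are obtained by truncating $\bb$ in the obvious way. Proposition \ref{prop: bounded+contractible} then yields contractibility and hence $\kk$-acyclicity of these subcomplexes, producing a cellular resolution of the ideal generated by cotype monomials at vertices of $\cB(A)$.

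For minimality, I would invoke the strict-inequality clause of Lemma \ref{lem:cellular_criteria}: if $G \subsetneq H$ are cells in $\cB(A)$, then by Lemma \ref{lem:type+facts} one has $B_{T(G)} \supsetneq B_{T(H)}$, so complementation inside $B_A$ yields $B_{\overline{T}(G)} \subsetneq B_{\overline{T}(H)}$ and hence distinct fine cotype monomials. For the coarse labeling, summing over left nodes preserves this strict inequality, so minimality follows in both cases.

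The step I expect to be the main obstacle is the identification of the ideal generated by the vertex cotypes of $\cB(A)$ with the full cotype ideal $I_{\overline{T}(\cH)}$ of Definition \ref{def: typeIdeals}, which is built from cotypes of \emph{all} points of $\troptorus{d}$, including those lying in unbounded cells. I would address this by proving that for every cell $C \in \cC(A)$ there exists a vertex $v \in \cB(A)$ with $B_{T(v)} \supseteq B_{T(C)}$, so that $\xx^{\overline{T}(v)} \mid \xx^{\overline{T}(C)}$. The construction enlarges $B_{T(C)}$ inside $B_A$ by adjoining edges until the resulting subgraph is connected and spanning on $V(B_A)$; Lemma \ref{lem:type+facts} then forces the corresponding cell to be $0$-dimensional, while Proposition \ref{prop:bounded+graph} guarantees that a connected spanning collection of bidirected edges makes the recession graph strongly connected, so that this vertex lies in $\cB(A)$. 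Once this identification is secured, the proposition follows by concatenating the preceding steps.
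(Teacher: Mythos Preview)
Your approach matches the paper's: it too simply combines Proposition~\ref{prop: typesLabelCells}, Proposition~\ref{prop: bounded+contractible}, and Lemma~\ref{lem:cellular_criteria}, without further comment. You have in fact gone further by explicitly flagging the identification step the paper leaves implicit, namely that the ideal generated by cotype labels on the vertices of $\cB(A)$ agrees with the cotype ideal of Definition~\ref{def: typeIdeals}, which is built from all points of $\troptorus{d}$.

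However, your proposed argument for that identification has a gap. Enlarging $B_{T(C)}$ to an \emph{arbitrary} connected spanning subgraph $S \subseteq B_A$ need not produce a type that actually occurs in the arrangement: the cell $C_S$ may well be empty, in which case Lemma~\ref{lem:type+facts} tells you nothing and Proposition~\ref{prop:bounded+graph} does not apply. What you actually need is that every cell $C_T$ of $\cC(A)$ has a $0$-cell in its closure; since $0$-cells are points and hence bounded, they automatically lie in $\cB(A)$, and their cotype divides that of $C_T$. This follows cleanly from the duality of Proposition~\ref{prop: mixedSubdiv}: every cell of the mixed subdivision $\cM(A)$ sits inside some maximal cell, so dually every cell of $\cC(A)$ contains some $0$-cell as a face. (Alternatively, argue directly that when $B_A$ is connected a positive-dimensional $C_T$ can never be an affine subspace, hence has a proper face in $\cC(A)$, and iterate.) With this fix in place your proof is complete.
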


\subsection{Resolutions supported on mixed subdivisions of generalized permutohedra}

Recall from Definition \ref{def: generalizedPermutohedron} that the support of a tropical matrix $A$ also gives rise to a generalized permutohedron $P_{B_A}$, given by
\[P_{B_A}= \sum_{j=1}^n \Delta_{N_j(B_A)}\subseteq \RR^d.\]
Furthermore, the entries of the matrix $A$ gives rise to a regular mixed subdivision $\cM(A)$ of $P_{B_A}$.
\noindent
We define a `fine monomial labeling' on $\cM(A)$ (with monomials in $\widetilde{S}$) in the following way. Given a face $F = \sum_{j = 1}^n \Delta_{I_j}$, where $I_j \subseteq N_{B_A}(j)$,  we assign to $F$ the monomial 
\begin{equation}\label{eqn: fineTypePB}
     \prod_{j \in [n]} \; \prod_{i \in I_j} x_{ij}.
\end{equation}
\noindent
Similarly we obtain a `coarse monomial labeling' of $P_{B_A}$ (with monomials in $S$) by specializing the monomials above to the second index $x_{ij} \mapsto x_j$. 
Note that under this coarse labeling a $0$-cell $v$ of $P_{B_A}$ is labeled by a monomial whose exponent vector is precisely the coordinates of $v$. See Figure \ref{fig: mixedSubdiv} for an example. Using Proposition~\ref{prop: mixedSubdiv}, it turns out these monomial labelings are dual to the type labelings of the corresponding tropical complex.
From this we observe the following.

\begin{cor}\label{cor: cellResGenPerm}
For any $d \times n$ tropical matrix $A$, the monomial labeled mixed subdivision $\cM(A)$ of the generalized permutohedron $P_{B_A}$, with labels given by fine (respectively coarse) type, supports a minimal cellular resolution of the ideal $I_{T(\cH)}$ (respectively $I_{\bft(\cH)}$). 
\end{cor}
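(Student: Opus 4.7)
The plan is to transfer the cocellular resolution of Proposition \ref{prop: cocellularRes} across the duality of Proposition \ref{prop: mixedSubdiv}. That duality provides an inclusion-reversing bijection between the faces of the tropical complex $\cC(A)$ and those of the mixed subdivision $\cM(A)$, sending a cell $C_T$ of fine type $T$ to the mixed cell $F_T = \sum_{j=1}^n \Delta_{N_j(T)}$ with $N_j(T) = \{i : T_{ij} = 1\}$. Under this correspondence, maximal cells of $\cC(A)$ are identified with the vertices of $\cM(A)$.

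First I would verify that the monomial labels on dual pairs of cells coincide. Definition \eqref{eqn: fineTypePB} assigns to $F_T$ the monomial
\[
\prod_{j \in [n]} \prod_{i \in N_j(T)} x_{ij} \;=\; \prod_{(i,j):\, T_{ij} = 1} x_{ij} \;=\; \xx^T,
\]
which is exactly the fine type label carried by $C_T$ in $\cC(A)$; the coarse case follows by the specialization $x_{ij}\mapsto x_j$. In particular, the ideal generated by the vertex labels of $\cM(A)$ is precisely $I_{T(\cH)}$ (respectively $I_{\bft(\cH)}$).

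The substantive step is then that an inclusion-reversing bijection between the face posets of two polyhedral complexes, together with a compatible monomial labelling, identifies the cellular chain complex of one with the cellular cochain complex of the other, up to a global choice of orientation on corresponding pairs of cells. This yields an isomorphism of complexes of free $\widetilde S$-modules (respectively $S$-modules) between the cellular complex of $\cM(A)$ and the cocellular complex of $\cC(A)$. Since exactness is a property of the complex itself, Proposition \ref{prop: cocellularRes} then implies that the cellular complex of $\cM(A)$ is a free resolution of $\widetilde S / I_{T(\cH)}$ (respectively $S / I_{\bft(\cH)}$). Minimality also transfers: a strict containment $F \subsetneq F'$ in $\cM(A)$ corresponds to a strict containment $C_{F'} \subsetneq C_F$ in $\cC(A)$, across which strict inequality of type labels has already been established.

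The step I would handle most carefully is reconciling the cellular boundary sign conventions on $\cM(A)$ with the coboundary signs on $\cC(A)$, since the two complexes live in different ambient spaces and the duality arises via the Cayley embedding rather than from literal polar duality. One should fix compatible orientations on corresponding pairs of faces once and for all and verify that the resulting identification intertwines the differentials; this is in the spirit of standard polyhedral duality but merits an explicit check.
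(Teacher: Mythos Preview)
Your approach is correct and essentially identical to the paper's: both transfer the cocellular resolution of Proposition~\ref{prop: cocellularRes} across the duality of Proposition~\ref{prop: mixedSubdiv}, using that the monomial labels on dual cells agree. You are simply more explicit about the label-matching computation and the sign-compatibility issue, both of which the paper leaves implicit.
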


\begin{proof}
From Proposition \ref{prop: mixedSubdiv} we have that the tropical complex $\cC(A)$ is dual to the mixed subdivision ${\mathcal M}(A)$ of $P_{B_A}$, and in addition the fine (respectively coarse) monomial labeling of a cell in $P_{B_A}$ has the same fine (respectively coarse) type of the corresponding cell in $\cC(A)$.  The result then follows from Proposition \ref{prop: cocellularRes}.
\end{proof}

\begin{cor}\label{cor: coarsemonomials}
For a sufficiently generic $d \times n$ tropical matrix $A$, the coarse type ideal $I_{\bft(\cH)}$ is generated by monomials corresponding to the lattice points of $P_{B_A}$.
\end{cor}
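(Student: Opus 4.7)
The plan is to combine Corollary~\ref{cor: cellResGenPerm} with an explicit description of the $0$-cells of a fine mixed subdivision. By Corollary~\ref{cor: cellResGenPerm}, the complex $\cM(A)$ with its coarse monomial labeling supports a minimal cellular resolution of $I_{\bft(\cH)}$, so the minimal generators of $I_{\bft(\cH)}$ are exactly the monomial labels on the $0$-cells of $\cM(A)$. It therefore suffices to identify the $0$-cells of $\cM(A)$, together with their labels, with the lattice points of $P_{B_A}$ and the associated ``lattice-point monomials'' $\xx^{{\bf v}}$.

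First I would unpack the labeling. Any $0$-cell of $\cM(A)$ is, by construction of a mixed subdivision of $P_{B_A} = \sum_{j=1}^n \Delta_{N_j(B_A)}$, expressible as a Minkowski sum $v = \sum_{j=1}^n {\bf e}_{i_j}$ with $i_j \in N_j(B_A)$. Specializing the labeling rule in~(\ref{eqn: fineTypePB}) via $x_{ij}\mapsto x_i$ then produces the coarse label $\prod_{j} x_{i_j} = x_1^{t_1}\cdots x_d^{t_d}$, whose exponent vector $(t_1,\ldots,t_d)$ is precisely the coordinate vector of $v$. Consequently every $0$-cell is a lattice point of $P_{B_A}$ whose monomial label is $\xx^{v}$, giving one containment.

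The main step is the reverse: every lattice point of $P_{B_A}$ must appear as a $0$-cell of $\cM(A)$. For this I would invoke unimodularity. Since $A$ is sufficiently generic, ${\mathcal S}(A)$ is a triangulation of the root polytope $Q_{B_A}$, and since $Q_{B_A}$ inherits unimodularity from $\Delta_{n-1}\times\Delta_{d-1}$, this triangulation is automatically unimodular. Under the Cayley trick each maximal mixed cell $C = \sum_j \Delta_{I_j}$ of $\cM(A)$ corresponds to a unimodular top-dimensional simplex of ${\mathcal S}(A)$; since the maximal mixed cells partition $P_{B_A}$, any lattice point $w \in P_{B_A}$ lies in some such $C$, and unimodularity forces $w$ to coincide with one of the Minkowski-sum vertices $\sum_{j}{\bf e}_{i_j}$ with $i_j\in I_j$, which is a $0$-cell of $\cM(A)$ by construction. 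The main obstacle I anticipate is precisely this last unimodularity argument: transferring the ``lattice points equal vertices'' property from a unimodular simplex inside $Q_{B_A}$ to its Cayley counterpart in $P_{B_A}$. I would handle this by observing that Cayley provides a lattice-preserving affine correspondence between the two, so the lattice-point description of the simplex transports to the desired description of its mixed cell.
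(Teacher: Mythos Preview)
Your overall plan matches the paper's: identify the generators with the $0$-cells of $\cM(A)$ via Corollary~\ref{cor: cellResGenPerm}, check that every $0$-cell is a lattice point, and then show every lattice point is a $0$-cell. The easy direction is fine. The gap is in the last step.

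The problem is the sentence ``unimodularity forces $w$ to coincide with one of the Minkowski-sum vertices $\sum_j \ee_{i_j}$ \dots, which is a $0$-cell of $\cM(A)$ by construction,'' together with your proposed fix that ``Cayley provides a lattice-preserving affine correspondence'' between the simplex in $Q_{B_A}$ and its mixed cell $C$ in $P_{B_A}$. No such correspondence exists: a maximal simplex of $\cS(A)$ has dimension $d+n-2$, whereas the corresponding maximal mixed cell has dimension $d-1$, so there is no affine bijection along which to transport the ``lattice points $=$ vertices'' property. The Cayley trick matches cells of the two subdivisions, not lattice points of individual cells. Separately, writing a lattice point of $C=\sum_j\Delta_{I_j}$ as $\sum_j \ee_{i_j}$ with $i_j\in I_j$ does not use unimodularity at all --- it is \cite[Lemma~14.12]{postnikov2009permutohedra}, valid for any Minkowski sum of coordinate simplices. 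And having that expression in hand, it is not ``by construction'' that the point is a $0$-cell of $\cM(A)$: that is precisely the statement still to be proved.

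The paper closes this gap by a direct argument on mixed cells rather than through Cayley. It takes the \emph{smallest} mixed cell $C$ containing the lattice point $\pp$, writes $\pp=\sum_j \ee_{k_j}$ with $k_j\in I_j$ via Postnikov's lemma, and shows that if $C\neq\{\pp\}$ then one can perturb $\pp$ by $\pm(\ee_k-\ee_l)$ inside $C$, forcing $\{k,l\}\subseteq I_s\cap I_t$ for distinct $s,t$ and contradicting fineness. You could also rescue your route by proving directly that in a fine mixed cell every point $\sum_j \ee_{i_j}$ with $i_j\in I_j$ is a vertex of $C$ (this reduces to the type graph being a forest, so the system $\ell_{i_j}>\ell_i$ for $i\in I_j\setminus\{i_j\}$ has no cyclic obstruction), but that argument has to be supplied; the Cayley shortcut does not.
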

\begin{proof}
Corollary~\ref{cor: cellResGenPerm} gives that the generators are the labels of the 0-dimensional cells of ${\mathcal M}(A)$.
It suffices to show this is all lattice points when $A$ is sufficiently generic, i.e. when $\mathcal{M}(A)$ is a fine mixed subdivision.

Every 0-dimensional cell is a sum $\sum_{j=1}^n \ee_{k_j}$ for some $k_j \in N_j(B_A)$, so is a lattice point.
Conversely, let $\pp$ be a lattice point and $C = \Delta_{I_1} + \cdots + \Delta_{I_n}$ the smallest fine mixed cell containing $\pp$.
By~\cite{postnikov2009permutohedra}*{Lemma 14.12}, we can write $\pp = \sum_{j=1}^n \ee_{k_j}$ where $k_j \in I_j$.
If $C \neq \{\pp\}$, then $\pp$ is contained in the relative interior of $C$, and so we can perturb $\pp$ in any direction in the affine span of $C$.
The mixed subdivision structure implies that there exists some $k, l \in [d]$ such that $\pp \pm (\ee_k - \ee_l) \in C$.
In particular, this means $k, l \in I_s \cap I_t$ for distinct $s, t \in [n]$.
This contradicts $C$ being a fine mixed cell, as $\dim(\Delta_{I_s} + \Delta_{I_t}) \neq \dim(\Delta_{I_s}) + \dim(\Delta_{I_t})$.
Therefore $C = \{\pp\}$, and so is a cell of $\mathcal{M}(A)$.
\end{proof}


Recall that a minimal free resolution of any finitely generated $\widetilde{S}$-module is unique up to isomorphism. The fact that any regular fine mixed subdivision of $P_{B_A}$ supports a minimal resolution of $I_{T(\cH)}$ allows us to recover the strong equidecomposability of such complexes, generalizing \cite{dochtermann2012tropical}*{Corollary 5}.

\begin{cor}
Suppose $B \subset K_{d,n}$ is a bipartite graph and let $P_{B}$ denote the generalized permutohedron associated to $B$.  Then for any fine mixed subdivision of $P_{B}$, the collection of coarse types ${\bf t}(C)$ for cells $C$ with $\dim C = k$, counted with multiplicities, is the same.
In particular, any fine mixed subdivision of $P_{B}$ has the same $f$-vector.
\end{cor}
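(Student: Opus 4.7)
The plan is to deduce the corollary from Corollary \ref{cor: cellResGenPerm} together with the uniqueness of minimal free resolutions up to isomorphism. First I would observe that, by Corollary \ref{cor: coarsemonomials}, the coarse type ideal $I_{\bft(\cH)}$ depends only on the bipartite graph $B = B_A$, since its minimal monomial generators are exactly the lattice points of $P_B$. Consequently, for any sufficiently generic tropical matrix $A$ with $B_A = B$, the ideal $I_{\bft(\cH(A))}$ is the same ideal, which I will denote $I_B$.

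Next, I would invoke Corollary \ref{cor: cellResGenPerm}: the mixed subdivision $\cM(A)$, equipped with its coarse monomial labeling, supports a minimal cellular resolution of $I_B$. As $A$ varies over sufficiently generic matrices with $B_A = B$, the subdivision $\cM(A)$ ranges over all regular fine mixed subdivisions of $P_B$. By the uniqueness of minimal free $\NN^d$-graded resolutions, the multigraded Betti numbers $\beta_{i,\mathbf{a}}(I_B)$ are intrinsic invariants of $I_B$. In any such minimal cellular resolution, $\beta_{i,\mathbf{a}}(I_B)$ equals the number of cells of $\cM(A)$ in the dimension associated to homological degree $i$ whose coarse type equals $\mathbf{a}$. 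Therefore, across all regular fine mixed subdivisions of $P_B$, the multiset of coarse types of $k$-dimensional cells agrees, and summing over multidegrees yields equality of $f$-vectors.

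The main obstacle is extending the conclusion from regular to arbitrary fine mixed subdivisions, since the tropical-geometric machinery only produces regular ones directly. I would handle this by verifying the cellular resolution criterion of Lemma \ref{lem:cellular_criteria} intrinsically for an arbitrary fine mixed subdivision $\cM$ of $P_B$: the relevant subcomplex $\cM_{\leq \mathbf{b}}$ collects those Minkowski-summand cells whose aggregated coarse-type label divides $\mathbf{x}^\mathbf{b}$, and one can establish its contractibility directly from the Minkowski sum structure of $P_B$, using that every lattice point of $P_B$ admits a unique expression as a sum of vertices of the summand simplices (as in the proof of Corollary \ref{cor: coarsemonomials}). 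Once $\cM$ is known to support a minimal cellular resolution of $I_B$, the uniqueness argument applies verbatim to give the desired invariance. If this direct approach runs into trouble, a fallback would be a flip argument: any two fine mixed subdivisions of a sum of simplices are connected by bistellar flips, and one can check that each flip preserves the multiset of coarse types of cells of each dimension since flips locally exchange cells of the same dimension and coarse label.
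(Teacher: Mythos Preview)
Your core argument for \emph{regular} fine mixed subdivisions is correct and is essentially what the paper does: the labeled complex $\cM(A)$ supports a minimal resolution of a type ideal, and uniqueness of minimal $\NN^d$-graded free resolutions forces the multiset of coarse labels in each dimension to be an invariant of $B$. One small difference: the paper's one-line justification invokes the \emph{fine} type ideal $I_{T(\cH)}$, whereas you use the coarse type ideal $I_{\bft(\cH)}$. Your choice is actually cleaner, since (as you note via Corollary~\ref{cor: coarsemonomials}) the coarse ideal visibly depends only on $B$, while different generic $A$ with the same $B_A$ can produce genuinely different fine type ideals. Either route works, but yours avoids that subtlety.

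Where your proposal overreaches is the non-regular case. You correctly flag this as the real obstacle, but your two suggested fixes have gaps. First, the claim that ``every lattice point of $P_B$ admits a \emph{unique} expression as a sum of vertices of the summand simplices'' is false in general (already for $P_{K_{2,2}} = \Delta_1 + \Delta_1$ the point $\ee_1 + \ee_2$ has two such expressions), so the sketch for directly verifying Lemma~\ref{lem:cellular_criteria} does not go through as written. In fact the paper explicitly lists ``does a non-regular fine mixed subdivision of $P_B$ support a cellular resolution of the type ideal?'' as an open question in Section~\ref{sec: GP}, so this is not something you can simply assert. Second, the flip argument needs real work: flip-connectivity of all fine mixed subdivisions of $P_B$ is not obvious for arbitrary $B$, and the assertion that a flip ``exchanges cells of the same dimension and coarse label'' is exactly the content to be proved, not a triviality.

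In short: the paper's own argument only establishes the regular case, and its statement should probably be read that way. Your proof matches the paper on that case; the extension to non-regular subdivisions remains open.
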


\subsection{Alexander duality and initial ideals of lattice ideals}
In the case of arrangements of tropical hyperplanes with full support, Block and Yu \cite{block2006tropical} observed that the fine cotype ideal is Alexander dual to a corresponding initial ideal of a certain toric ideal.  Here we establish a similar result for the case of arbitrary $A$, which will be important for our applications to toric edge ideals studied in Section \ref{sec: lattice}.  We first recall some relevant notions.


\begin{definition}[Lattice ideal]\label{def: latticeIdeal} Let $\cA$ be a set of vectors in $\ZZ^n$. Then the \textit{lattice ideal} associated to $\cP$ is the ideal $I_\cA = \ker \varphi_\cA$, where
\begin{align*}
\varphi_\cA: \; \kk[ x_\mathbf{p}\mid \pp\in \cA] & \; \longrightarrow  \;\kk[x_1,\dots, x_n]\\
x_{\pp} & \;\longmapsto \; x_1^{p_1}\cdots x_n^{p_n}.
\end{align*}
\end{definition}

In the case of a root polytope $Q_B$ associated to a bipartite graph $B$,  we have $\cA = \{e_i - e_{\overline j} \mid (i,\overline{j})\in E(B)\}$ and the lattice ideal of $Q_B$ will be the kernel of the map
\begin{align}\label{eq: latticeIdeal}
\begin{split}
\varphi_\cA: \kk[x_{ij} \mid  (i,\overline{j})\in E(B)] \; &\longrightarrow \; \kk[x_1,\dots, x_d, y_1^{-1},\dots, y_n^{-1}]\\
x_{ij}\; &\longmapsto \; x_i y_j^{-1}.
\end{split}
\end{align}

\begin{remark}
Since $Q_B$ is unimodular, it is known that monomial initial ideals of the lattice ideal of $Q_B$ are in bijection with regular triangulations of $Q_B$.  As we have seen, regular triangulations of $Q_B$ are encoded by sufficiently generic tropical hyperplane arrangements ${\mathcal H}(A)$ where $A$ satisfies $B = B_A$.
\end{remark}

\begin{example}\label{ex: latticeIdeal} Let $Q_B$ be the root polytope corresponding to the bipartite graph in our running example (see Figure \ref{fig: tropHAex}), where
\[A= \begin{bmatrix} 0 & 3 & \infty & 0 \\ 0 & 0 & 2 & 3 \\ 0 & \infty & 0 & \infty \end{bmatrix}.\]
\noindent
Then the lattice ideal $L = I_\cA$ of $Q_B$ is the ideal generated by
\begin{equation}\label{eq: latticeIdealLead}
\langle x_{11}x_{22} - \underline{x_{12} x_{21}}, \; \underline{x_{11} x_{24}} - x_{14} x_{21}, \; \underline{x_{12} x_{24}} - x_{14} x_{22}, \; x_{21} x_{33} - \underline{x_{23} x_{31}}\rangle.
\end{equation}
Recall from Notation \ref{not: cellRings} that a $d \times n$ tropical matrix $A$ gives rise to a monomial term order on $\widetilde{S}$ and hence an initial ideal $\inn_A(I)$ for any ideal $I \subset \widetilde{S}$.
The underlined terms correspond to the generators of the leading term ideal of $L$ with respect to the monomial term order on $\widetilde S$ induced by $A$.
\end{example}

\begin{definition}[Crosscut complex]\label{def: crosscut} Let $\cS(A)$ the subdivision of $Q_{B_A}$ induced by $A$. Define the \emph{crosscut complex} $\textrm{CrossCut}(\cS(A))$ to be the unique simplicial complex with the same vertices-in-facet incidences as the polyhedral complex $\cS(A)$.
\end{definition}

\begin{remark} If $\cS(A)$ is a triangulation of $Q_{B_A}$, then $\textrm{Crosscut}(\cS(A)) = \cS(A)$.
\end{remark}

\begin{prop}\label{prop: cotypeIdealAD}

Let $A$ be a $d \times n$ tropical matrix and let $\cH = \cH(A)$ denote the associated arrangement of tropical hyperplanes in $\troptorus{d}$. Let $L$ denote the lattice ideal of $Q_{B_A}$ (or, equivalently, the toric edge ideal of $B_A$). Then  the fine cotype ideal $I_{\overline{T}(\cH)}$ associated to $\cH$ is the Alexander dual of $M(\inn_A(L))$, where $M(\inn_A(L))$ is the largest monomial ideal contained in $\inn_A(L)$.
\end{prop}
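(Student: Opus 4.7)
The plan is to compute both sides of the claimed equality directly in terms of the subdivision $\cS(A)$ of $Q_{B_A}$ and then invoke Alexander duality in $\widetilde S$. By Proposition \ref{prop: typesLabelCells}, the minimal generators of $I_{\overline{T}(\cH)}$ are the fine cotype monomials at $0$-cells of $\cC(A)$; by Proposition \ref{prop: mixedSubdiv} these $0$-cells are in bijection with the facets $F$ of $\cS(A)$ via the map sending a $0$-cell $C$ of fine type $T$ to the simplex of $\cS(A)$ on vertex set $E(B_T)\subseteq E(B_A)$. Hence
\[
I_{\overline{T}(\cH)} \;=\; \bigl\langle \prod_{e \in E(B_A) \setminus F} x_e \;:\; F \text{ a facet of } \cS(A)\bigr\rangle.
\]

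Next I would identify $M(\inn_A(L))$ with the Stanley--Reisner ideal $I_{\textrm{CrossCut}(\cS(A))}$. Because $Q_{B_A}$ is unimodular, a monomial $x^u \in \widetilde S$ lies in $\inn_A(L)$ if and only if $x^u$ fails to have maximal $A$-weight in its fiber under the monomial map $\varphi_\cA$ of \eqref{eq: latticeIdeal}. By Sturmfels's correspondence between toric initial ideals and regular subdivisions (see \cite{sturmfels1996grobner}, Chapter 8), this occurs precisely when $\mathrm{supp}(u)\subseteq E(B_A)$ is not contained in any single facet of $\cS(A)$, i.e.\ when $\mathrm{supp}(u)$ is a non-face of $\textrm{CrossCut}(\cS(A))$. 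Therefore $M(\inn_A(L)) = I_{\textrm{CrossCut}(\cS(A))}$. Standard Alexander duality for squarefree monomial ideals (see \cite{MillerSturmfels:2005}) then yields
\[
M(\inn_A(L))^\vee \;=\; \bigl\langle \prod_{e \in E(B_A)\setminus F} x_e \;:\; F \text{ a facet of } \textrm{CrossCut}(\cS(A))\bigr\rangle,
\]
and by Definition \ref{def: crosscut} these facets coincide with the facets of $\cS(A)$, matching the expression for $I_{\overline{T}(\cH)}$ above.

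The main obstacle is making the identification $M(\inn_A(L)) = I_{\textrm{CrossCut}(\cS(A))}$ fully rigorous in the possibly non-generic case, where $\cS(A)$ may have non-simplicial maximal cells and $\inn_A(L)$ carries binomial generators in addition to monomial ones. When $A$ is sufficiently generic, this is immediate: $\inn_A(L)$ is itself the squarefree monomial Stanley--Reisner ideal of the triangulation $\cS(A)$, directly extending the argument of Block--Yu \cite{block2006tropical} from the full-support setting. For general $A$, I would either pass to a small generic perturbation of $A$ whose triangulation refines $\cS(A)$ without altering the crosscut complex, or argue directly via a fiber-wise analysis of $\varphi_\cA$ that every monomial in $\inn_A(L)$ is divisible by the leading term of a circuit binomial whose support is a minimal non-face of $\textrm{CrossCut}(\cS(A))$, so that no non-squarefree monomials contribute new generators to $M(\inn_A(L))$.
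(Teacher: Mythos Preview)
Your approach is essentially the same as the paper's: both identify the fine cotype ideal with the Alexander dual of the Stanley--Reisner ideal of $\textrm{CrossCut}(\cS(A))$ via the facet description coming from Proposition~\ref{prop: mixedSubdiv}, and both then argue that this Stanley--Reisner ideal coincides with $M(\inn_A(L))$.

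The one place where the paper is crisper is exactly the non-generic case you flag as an obstacle. Rather than perturbing or doing a fiber-wise circuit analysis, the paper observes that unimodularity of $Q_{B_A}$ forces every monomial in $\inn_A(L)$ to be squarefree (even when $\inn_A(L)$ also contains binomials), so $M(\inn_A(L))$ is automatically radical. With that in hand, the paper simply cites the proof of \cite{dochtermann2012tropical}*{Lemma 6.7}, which shows $\mathrm{Rad}(M(\inn_A(L)))$ equals the Stanley--Reisner ideal of the crosscut complex. So your unimodularity remark already contains the key to closing the gap: it gives radicality directly, and you do not need a perturbation argument.
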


\begin{proof}
Let $J$ be the Stanley--Reisner ideal of $\textrm{Crosscut}(\cS(A))$. Then $J$ is the Alexander dual of the fine cotype ideal associated to $\cH$.

It remains to check that $J = M(\inn_A(L))$. Because $\cA$ is unimodular, one has that $\inn_A(L)$ is squarefree and therefore $M(\inn_A(L))$ is radical. The proof of \cite{dochtermann2012tropical}*{Lemma 6.7} applies directly here, so one has
$$
M(\inn_A(L)) = \textrm{Rad}(M(\inn_A(L))) = J
$$
as desired.
\end{proof}

Combining Propositions \ref{prop: cotypeCellRes} and \ref{prop: cotypeIdealAD} gives the following result.  Here for a squarefree monomial ideal $I$ we use $I^\vee$ to denote its \emph{Alexander dual}.

\begin{cor}\label{cor: cellResDual} For any tropical matrix $A$ the bounded complex $\cB(A)$ of the corresponding tropical hyperplane arrangement supports a minimal cellular resolution of the ideal $M(\inn_A(L))^\vee$. In particular, if $\cH(A)$ is sufficiently generic, then $\inn_A(L)$ is a monomial ideal and $B(A)$ supports a minimal free resolution of $(\inn_A(L))^\vee$.
\end{cor}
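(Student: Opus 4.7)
The plan is to deduce the corollary directly by stitching together the two propositions cited just above it. Specifically, Proposition~\ref{prop: cotypeCellRes} already tells us that the bounded complex $\cB(A)$, equipped with its fine cotype labeling, supports a minimal cellular resolution of the fine cotype ideal $I_{\overline{T}(\cH)}$. Proposition~\ref{prop: cotypeIdealAD} then identifies the module being resolved, giving
\[
I_{\overline{T}(\cH)} \;=\; M(\inn_A(L))^{\vee}.
\]
So the first assertion is obtained simply by substituting this identification into the statement of Proposition~\ref{prop: cotypeCellRes}; no further work is needed for the general case.

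For the ``in particular'' clause, the only thing to check is that sufficient genericity of $\cH(A)$ forces the passage from $M(\inn_A(L))$ to $\inn_A(L)$ to be vacuous. By definition, sufficient genericity means that the regular subdivision $\cS(A)$ of the root polytope $Q_{B_A}$ induced by $A$ is a triangulation. Because $Q_{B_A}$ inherits unimodularity from the product of simplices (as recorded in the introduction), it is a standard fact that regular triangulations of $Q_{B_A}$ are in bijection with squarefree monomial initial ideals of the lattice ideal $L$. Hence under sufficient genericity, $\inn_A(L)$ is already a squarefree monomial ideal, which gives $M(\inn_A(L)) = \inn_A(L)$. Substituting this into the first part of the corollary yields the second statement.

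There really is no serious obstacle here: the corollary is a packaging result, and the only step requiring attention is the unimodularity/squarefree argument in the generic case, which is standard once one recalls that $Q_{B_A}$ sits inside a product of simplices. Consequently the proof can be presented in just a few lines, citing Propositions~\ref{prop: cotypeCellRes} and \ref{prop: cotypeIdealAD} for the first claim and adding a one-sentence reminder of unimodularity for the second.
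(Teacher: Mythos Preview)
Your proposal is correct and matches the paper's approach: the paper simply states that the corollary follows by combining Propositions~\ref{prop: cotypeCellRes} and~\ref{prop: cotypeIdealAD}, which is exactly what you do. Your additional justification for the sufficiently generic case (unimodularity of $Q_{B_A}$ forcing $\inn_A(L)$ to be a squarefree monomial ideal) is the same reasoning the paper invokes in the proof of Corollary~\ref{cor: linearRes} via \cite{sturmfels1996grobner}*{Theorem 8.3}.
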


Moreover, if $A$ is sufficiently generic, one can say more about the resolution of the (fine or coarse) cotype ideal.

\begin{cor}\label{cor: linearRes} If $A$ is sufficiently generic, then $(\inn_A(L))^\vee$ has a linear minimal free resolution.
\end{cor}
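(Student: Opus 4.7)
The plan is to invoke Corollary \ref{cor: cellResDual}, which for sufficiently generic $A$ furnishes a minimal cellular resolution of $(\inn_A(L))^\vee$ supported on the bounded complex $\cB(A)$ with cell labels given by the fine cotype $\overline{T}_\cH$. Minimality is already in hand, so it remains only to establish linearity. For this it suffices to exhibit a constant $q$ such that every cell $C$ of dimension $k$ in $\cB(A)$ carries a label $\xx^{\overline{T}_\cH(C)}$ of total degree exactly $q + k$, since every incidence $G \subset H$ between consecutive-dimension cells then satisfies $|\overline{T}_\cH(H)| - |\overline{T}_\cH(G)| = 1$, giving linear cellular differentials and a uniform minimal generator degree $q$.

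The heart of the argument is a combinatorial computation of the edge count of the fine type graph $B_{T_\cH(C)}$ for each cell $C$ in $\cC(A)$. Because $A$ is sufficiently generic, the dual mixed subdivision $\cM(A)$ of $P_{B_A}$ furnished by Proposition \ref{prop: mixedSubdiv} is a fine mixed subdivision. Combining Lemma \ref{lem:type+facts}(3) with this duality, a $k$-cell $C_T$ of $\cC(A)$ is dual to a mixed cell $\sum_j \Delta_{N_j(T)}$ of dimension $(d-1)-k$ in $\cM(A)$. Fineness of the mixed cell means its dimension also equals $\sum_j (|N_j(T)| - 1) = |E(B_T)| - n$; equating the two expressions yields $|E(B_T)| = d + n - 1 - k$. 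Subtracting from $|E(B_A)|$ then gives $|\overline{T}_\cH(C)| = q + k$, where $q := |E(B_A)| - d - n + 1$, and specializing to $k=0$ confirms $q$ as the common degree of the minimal generators.

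The main subtlety is verifying that the dual-cell correspondence applies uniformly to every cell of $\cB(A)$ and not just to the maximal ones. This reduces to the observation that in a fine mixed subdivision every cell is itself a fine mixed cell of the matching dimension, which is precisely why we require $A$ to be sufficiently generic rather than merely generic among maximal cells. A minor auxiliary check is that $P_{B_A}$ has dimension $d-1$, so that the top-dimensional mixed cells have the expected dimension and the dimension count above is correct; this follows from the connectedness of $B_A$, which is in turn forced by the non-emptiness of $\cB(A)$ via Corollary \ref{prop: bdd_complex_nonempty}.
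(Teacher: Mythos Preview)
Your argument is correct, and it arrives at the same conclusion as the paper's proof but by a somewhat different route. The paper argues directly on the bounded complex: (i) every vertex of $\cB(A)$ lies outside the same number of sectors, so all generators of the cotype ideal have equal degree, and (ii) when $A$ is sufficiently generic, passing from a cell $C$ to a facet $D \supset C$ changes the cotype in exactly one entry, so adjacent labels differ by one in total degree. These two observations are exactly the two ingredients of linearity, phrased in terms of the sector geometry of the tropical complex itself.

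You instead pass through the duality with the fine mixed subdivision of $P_{B_A}$ (Proposition~\ref{prop: mixedSubdiv}) and compute $|E(B_T)| = d+n-1-k$ for every $k$-cell, which simultaneously handles (i) and (ii) and moreover yields the explicit formula $q = |E(B_A)| - d - n + 1$ for the common generator degree. Both arguments ultimately rest on the same combinatorial fact---that in the generic case the type graph of a $k$-cell is a spanning forest on $[d]\sqcup[n]$ with $k+1$ components---but the paper extracts this from the tropical side while you extract it from the mixed-subdivision side. Your version has the advantage of making the degree computation completely explicit; the paper's version avoids the detour through the Cayley trick and the auxiliary dimension check on $P_{B_A}$. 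One small remark: your appeal to nonemptiness of $\cB(A)$ to force connectedness of $B_A$ is slightly circular as written; it would be cleaner to simply observe that if $\cB(A)$ is empty the resolution is trivial and the statement is vacuous, and otherwise Corollary~\ref{prop: bdd_complex_nonempty} gives connectedness and hence $\dim P_{B_A} = d-1$.
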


\begin{proof} 
If $A$ is sufficiently generic, then $\cS(A)$ is a triangulation of $Q_{B_A}$ corresponding to the Stanley--Reisner complex of the (squarefree) monomial ideal $\inn_A(L)$ \cite{sturmfels1996grobner}*{Theorem 8.3}.
In this case, $\cB(A)$ supports a minimal \emph{linear} free resolution of $(\inn_A(L))^\vee$. This follows from the following observations:
\begin{enumerate}[(i)]
    \item If $A$ is sufficiently generic, then the labels on the vertices of $\cB(A)$ are all of the same degree: each vertex is ``outside of'' the same number of regions as every other vertex.
    \item Let $D$ be a cell in $\cB(A)$. For any cell $C\subset D$ with $\dim C = \dim D - 1$, the cotype $T_{\cH}(C)$ differs from $T_{\cH}(D)$ in exactly one spot if $A$ is sufficiently generic. Therefore, the monomial labeling of the cell $D$ is exactly one degree higher than the monomial labeling of any cell $C\subset D$ such that $\dim C = \dim D - 1$.
\end{enumerate}
\end{proof}

\begin{example}\label{ex: cellRes} Consider the tropical hyperplane arrangement of Figure \ref{fig: tropHAex}. The bounded complex supports a minimal cellular resolution of the fine cotype ideal
$$
I_{\overline T(\cH)} = \langle x_{12}x_{23}x_{24}, \; x_{11}x_{12}x_{23}, \; x_{11} x_{31} x_{12}, \; x_{31} x_{12} x_{24}, \; x_{21} x_{31} x_{24}, \; x_{21} x_{23} x_{24} \rangle.
$$
The Alexander dual of this ideal in the polynomial ring $\kk[x_{ij} \mid (i,j)\in\supp(A)]$ is
$$\langle {x}_{21}{x}_{12}, \; {x}_{31}{x}_{23},\; {x}_{
     11}{x}_{24},\; {x}_{12}{x}_{24} \rangle$$
which is exactly the ideal $\inn_A(L)$ corresponding to the underlined leading terms of the ideal $L$ from (\ref{eq: latticeIdealLead}).
\end{example}

\begin{example}\label{ex: cellResNonGeneric} Now consider the tropical hyperplane arrangement associated to the tropical matrix
$$
A' = \begin{bmatrix}
0 & 0 & \infty & 0 \\
0 & 0 & 2 & 3 \\
0 & \infty & 0 & \infty
\end{bmatrix}.
$$
This arrangement corresponds to the same one as in Figure \ref{fig: tropHAex}, except that now the hyperplane corresponding to $v_2$ lies on top of the hyperplane corresponding to $v_1$, so it is no longer generic. However, the bipartite graph for $A$ and $A'$ coincide, and therefore so do their lattice ideals $L$. In this case, the fine cotype ideal is
$$
I_{\overline{T}(\cH(A'))} = \langle x_{23} x_{24}, \; x_{24} x_{31}, \; x_{11} x_{31} x_{12}, \; x_{11} x_{12} x_{23} \rangle 
$$
and the Alexander dual of the fine cotype ideal has generators
$$
\langle x_{11} x_{24}, \; x_{12} x_{24}, \; x_{23} x_{31} \rangle. 
$$
This coincides with the largest monomial ideal inside of the initial ideal of $L$ in (\ref{eq: latticeIdealLead}) with respect to the term order induced by $A'$, which is
$$
\inn_{A'}(L) = \langle x_{11} x_{22} - x_{12} x_{21}, \; x_{11} x_{24}, \; x_{12} x_{24}, \; x_{23} x_{31} \rangle.
$$
\end{example}

\subsection{Volumes and syzygies}\label{sec: dimension}

Recall that a bipartite graph $B \subset K_{d,n}$ encodes a generalized permutohedron $P_B$ obtained as a sum of simplices, as described in Definition \ref{def: generalizedPermutohedron}. For any tropical matrix $A$ satisfying $B = B_A$ we have from Corollary \ref{cor: cellResGenPerm} that the corresponding mixed subdivision of $P_B$ supports a minimal cellular resolution of the resulting fine type ideal $I_{T(\cH)}$ as well as the coarse type ideal $I_{\bft(\cH)}$.  In particular, the Betti numbers of these ideals can be read off from the $f$-vectors of the underlying subdivision.  In many cases the volume of these cells can also be understood in terms of combinatorial data, and in this section we explore this connection.
\subsubsection{Graphic tropical hyperplane arrangements}\label{subsec: graphic}

The most straightforward application arises in the case of tropical arrangements where the apex of each hyperplane has exactly two finite coordinates.  This inspires the following definition.

\begin{definition}\label{def: graphic}
An arrangement ${\mathcal H}(A)$ of $n$ tropical hyperplanes in $\troptorus{d}$ is \emph{graphic} if each column ${\bf a}_i$ in the underlying $d \times n$ matrix $A = ({\bf a}_1, \dots, {\bf a}_n)$ has exactly two finite coordinates.
\end{definition}

We explain the choice of terminology. First note that a graphic tropical hyperplane arrangement ${\mathcal H}(A)$ in $\troptorus{d}$ consists of Euclidean subspaces that are each dual to a line segment, so that ${\mathcal H}(A)$ is in fact a collection of (ordinary) affine hyperplanes in $(d-1)$-dimensional Euclidean space. In fact if a column ${\bf a}_k \in A$ has finite entries $a_{ik}$ and $a_{jk}$ then the corresponding hyperplane $H(\aa_k)$  consists of the set 
\[\{{\bf x} \in \troptorus{d}\mid x_i-x_j = a_{jk} - a_{ik}\}.\]
\noindent
We can now think of the rows of $A$ as vertices of a graph ${\cG}(A)$, where each column of $A$ describes an edge.  One can then see that the tropical hyperplane arrangement ${\mathcal H}(A)$ corresponds to a perturbation of the classical (central) \emph{graphic hyperplane arrangement} associated to the graph $\cG(A)$.

For a graphic tropical hyperplane arrangement ${\mathcal H}(A)$, the corresponding generalized permutohedron $P_{B_A}$ is the \emph{(graphic) zonotope} ${\mathcal Z}_{\cG(A)}$ given by the sum of line segments of the form $[\ee_i,\ee_j]$, where $i$ and $j$ are the finite entries in some column of $A$ (or in other words some edge in the graph $\cG(A)$).  A generic perturbation of the central graphic hyperplane arrangement  corresponds to a fine mixed subdivision (in this case a \emph{zonotopal tiling}) of ${\mathcal Z}_{\cG(A)}$. In our case the subdivision we obtain is dual to the subdivision of tropical space $\troptorus{d}$ induced by the sufficiently generic tropical hyperplane arrangement ${\mathcal H}(A)$ discussed above.


\begin{example}\label{ex: zonotope}
As an example, we can take $A$ to be the tropical matrix
\[A = \begin{bmatrix} 0 & \infty & \infty & 0 \\ 0 & 1 & \infty & \infty \\ \infty & 2 & 4 & \infty \\ \infty & \infty & 6 & 9 \end{bmatrix}\]
One can see that $\cG(A)$ is a 4-cycle, and the corresponding zonotope ${\mathcal Z}_{\cG(A)}$ is depicted in Figure \ref{fig: zonotope} as a Minkowski sum of edges of the simplex.
That fine mixed subdivision (zonotopal tiling) of ${\mathcal Z}_{\cG(A)}$ induced by $A$ is also depicted, computed via \texttt{Polymake}~\cite{polymake:2000}.
\end{example}


\begin{figure}[h]
\centering
\includegraphics[width=0.85\textwidth]{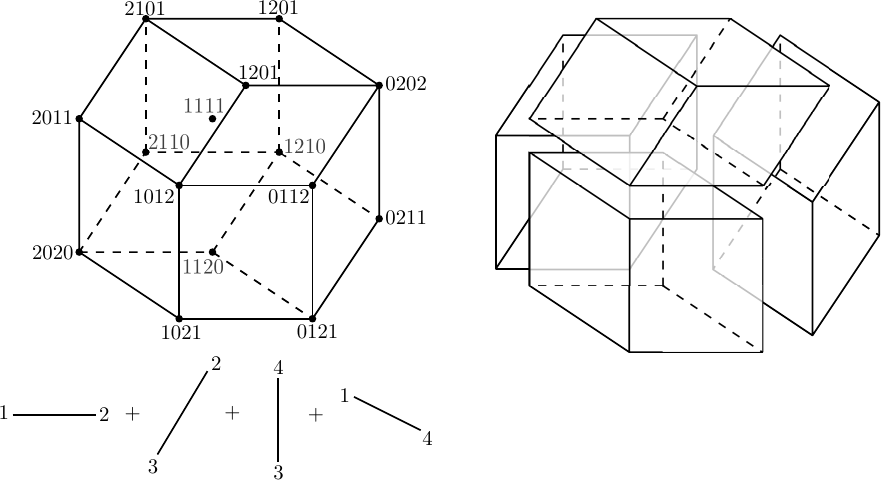}
\caption{The zonotope ${\mathcal Z}_{\cG(A)}$ and its fine mixed subdivision from Example~\ref{ex: zonotope}.}
\label{fig: zonotope}
\end{figure}

Now suppose ${\mathcal H} = {\mathcal H}(A)$ is a sufficiently generic graphic tropical hyperplane arrangement with underlying graph $\cG = \cG(A)$. Combining our results with well-known properties of zonotopes provides enumerative interpretations of certain Betti numbers of our type ideals.


\begin{cor}\label{cor: zonotope}
Suppose ${\mathcal H} = {\mathcal H}(A)$ is a graphic tropical hyperplane arrangement with underlying connected graph $\cG = \cG(A)$. Then the number of generators of the fine type ideal $I_{T({\mathcal H})}$, as well as the coarse type ideal $I_{\bft(\cH)}$, is given by the number of forests in $\cG$.  The rank of the highest syzygy module of both $I_{T({\mathcal H})}$ and $I_{\bft(\cH)}$ is given by the number of spanning trees of $\cG$.
\end{cor}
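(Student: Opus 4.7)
The plan is to translate the two claims into counting problems about the graphic zonotope ${\mathcal Z}_\cG$ and then invoke two classical enumerative facts.

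First I would observe that because $\cH = \cH(A)$ is graphic, each right vertex of $B_A$ has degree exactly $2$. Identifying the right vertices of $B_A$ with the edges of $\cG$ and the left vertices with the vertices of $\cG$, each summand $\Delta_{N_j(B_A)}$ of $P_{B_A}$ is a line segment $[\ee_i, \ee_j]$ indexed by the edge $e = ij$ of $\cG$. Hence
\[
P_{B_A} \;=\; \sum_{e = ij \in E(\cG)} [\ee_i, \ee_j] \;=\; {\mathcal Z}_\cG,
\]
the graphic zonotope of $\cG$.

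Next, take $A$ to be sufficiently generic. By Corollary~\ref{cor: cellResGenPerm}, the regular fine mixed subdivision $\cM(A)$ of ${\mathcal Z}_\cG$ supports a minimal cellular resolution of both $I_{T(\cH)}$ and $I_{\bft(\cH)}$. From this the number of minimal generators of either ideal equals the number of $0$-cells of $\cM(A)$, and the rank of the top syzygy module equals the number of maximal cells of $\cM(A)$ (which are of dimension $d-1$, since $\cG$ is connected). By Corollary~\ref{cor: coarsemonomials} the $0$-cells of $\cM(A)$ are in bijection with the lattice points of ${\mathcal Z}_\cG$.

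The two claims then reduce to classical enumerative results about graphic zonotopes. For the first, a theorem of Stanley identifies the lattice points of a unimodular zonotope with its collection of linearly independent subsets of generators; in our situation the generators of ${\mathcal Z}_\cG$ (up to translation by an integer vector) are the vectors $\ee_i - \ee_j$ for $e = ij \in E(\cG)$, and the linearly independent subsets among these are precisely the forests of $\cG$. This yields the forest count. For the second, the normalized volume of ${\mathcal Z}_\cG$ equals the number of spanning trees of $\cG$ by the matrix-tree theorem; since $\cM(A)$ is a unimodular fine mixed subdivision, each of its maximal cells has unit normalized volume, and so the number of maximal cells equals this volume. The main subtlety to verify is that distinct $0$-cells of $\cM(A)$ receive distinct fine-type monomial labels (so that no generator of the fine type ideal is redundant under passage from $0$-cells to minimal generators), but this is automatic from the minimality of the resolution asserted in Corollary~\ref{cor: cellResGenPerm}.
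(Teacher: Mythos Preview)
Your proposal is correct and follows essentially the same route as the paper: identify $P_{B_A}$ with the graphic zonotope ${\mathcal Z}_\cG$, invoke Corollaries~\ref{cor: cellResGenPerm} and~\ref{cor: coarsemonomials} so that generators correspond to lattice points and top syzygies to unit-volume maximal cells, and then appeal to the classical enumerative identities. The only cosmetic difference is that the paper packages both enumerative facts into a single citation (\cite[Proposition~2.4]{postnikov2009permutohedra}), whereas you unpack them into Stanley's unimodular-zonotope lattice-point formula and the matrix--tree theorem; the logical structure is the same.
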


\begin{proof}

In \cite{postnikov2009permutohedra}*{Proposition 2.4} it is shown that for a connected graph $\cG$ the volume of the graphic zonotope ${\mathcal Z}(\cG)$ equals the number of spanning trees of $\cG$, and also that the number of lattice points of ${\mathcal Z}(\cG)$ equals the number of forests in $\cG$.  Our first claim then follows from the fact that ${\mathcal Z}(\cG)$ supports a minimal resolution of both $I_{T({\mathcal H})}$ and $I_{\bft(\cH)}$, whose generators in each case are in bijection with the lattice points of ${\mathcal Z}(\cG)$ (see Corollaries \ref{cor: cellResGenPerm} and \ref{cor: coarsemonomials}). The second claim follows from the fact that any maximal cell in a zonotopal tiling (in this case a  parallelepiped corresponding to a spanning tree of ${\mathcal G}$) has volume 1.
\end{proof}

\begin{remark}
We observe that the type ideals coming from graphic tropical hyperplane arrangements can be realized as \emph{oriented matroid ideals} studied by Novik, Postnikov, and Sturmfels in \cite{NPS}.
As we have seen, a graphic tropical hyperplane arrangement can be realized as a perturbation of the corresponding central graphic hyperplane arrangement.  The oriented matroid ideal of the resulting affine arrangement can be seen to coincide with the underlying fine type ideal. 

On the other hand, the authors of \cite{NPS} study a different oriented matroid ideal arising from a graph $\cG$.  Namely, they use the affine oriented matroid ${\mathcal A}$ obtained by intersecting the central graphic hyperplane arrangement of $\cG$ by a generic affine hyperplane.  It is proved that the resulting oriented matroid ideal $O_{\mathcal A}$ is  Cohen-Macaulay, and in \cite{NPS} the authors provide formulas and interpretations for the M\"obius invariants and coinvariants of both the complete graph $K_n$ and complete bipartite graph $K_{m,n}$.  

In particular, our methods provide a different way to obtain an oriented matroid ideal from a graph $\cG$.  It is not clear to us how these two ideals relate (note they live in different polynomial rings).
\end{remark}

Graphic tropical hyperplane arrangements also have connections to \emph{$G$-parking function ideals} and other concepts coming from the theory of \emph{chip-firing} on a graph $G$.  Such ideals $M_G$ were first introduced by Postnikov and Shapiro in \cite{postnikovshapiro}, where connections to power ideals were studied. In \cite{DocSan} Dochtermann and Sanyal proved that a minimal cellular resolution of $M_G$ is supported on a certain affine slice of the graphic hyperplane arrangement associated to $G$.  One can recover this ideal as a certain coarse type ideal associated to the underlying graphic tropical hyperplane arrangement, leading to a number of applications and generalizations.  See Section \ref{sec: GP} for more discussion.




\subsubsection{Computing volumes via Alexander duality}

For $P_B$ a generalized permutohedron obtained as a sum of simplices Postnikov \cite{postnikov2009permutohedra}*{Theorem 9.3} provides a formula for the volume of $P_B$ in terms of what he calls \emph{$G$-draconian sequences} (where $G$ = $B$ in our context). Given a sufficiently generic $d\times n$ tropical matrix $A$ with $B_A = B$, one may also obtain the volume of $P_B$ by studying the types associated to the vertices of the tropical complex.



In what follows, recall that a cell in the bounded complex $\cB(A)$ can be described by its type $T$.  Also recall from Definition \ref{def: leftrightDV} that for any fine type $T$ we denote by $\RD(T) = (a_1,\dots, a_n)$ the right degree vector of $T$.

\begin{prop}\label{prop: volumeGenPerm}
Let $A$ be a sufficiently generic $d\times n$ tropical matrix with corresponding tropical hyperplane arrangement $\cH(A)$, and set $B = B_A\subseteq K_{d,n}$. Let $\cS$ be the set of fine types associated to the vertices of the bounded complex $\cB(A)$. 
Then the volume of the associated generalized permutohedron is
$$
\vol P_B = \sum_{T\in \cS} \frac{1}{(\RD(T)_1 - 1)!} \dots \frac{1}{(\RD(T)_n - 1)!}.
$$
\end{prop}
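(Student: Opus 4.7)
The strategy is to express $\vol P_B$ as the sum of volumes of the top-dimensional cells of the fine mixed subdivision $\cM(A)$ of $P_B$, and identify each via the duality with the tropical complex. Since $A$ is sufficiently generic, $\cS(A)$ is a triangulation of $Q_B$, so by the Cayley trick $\cM(A)$ is a fine mixed subdivision of $P_B$. By Proposition~\ref{prop: mixedSubdiv}, there is a dimension-reversing duality between $\cC(A)$ and $\cM(A)$; in particular, the $0$-cells of $\cC(A)$ correspond to the top-dimensional cells of $\cM(A)$. Since every $0$-cell is a single point and hence bounded, it lies in $\cB(A)$. Under the correspondence, a vertex with fine type $T$ is sent to the mixed cell $C_T := \sum_{j=1}^n \Delta_{N_j(T)}$, so
\[
\vol P_B \;=\; \sum_{T \in \cS} \vol(C_T).
\]

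To compute each $\vol(C_T)$, note that $C_T$ being a fine mixed cell means $\sum_j(|N_j(T)|-1) = d-1 = \dim P_B$, and the Minkowski-sum map
\[
\phi_T \colon \prod_{j=1}^n \Delta_{N_j(T)} \longrightarrow C_T, \qquad (x_1, \ldots, x_n) \mapsto \sum_j x_j,
\]
is a bijection. I would then show that $\phi_T$ is furthermore a lattice isomorphism, whence $\vol(C_T) = \prod_j \vol(\Delta_{N_j(T)}) = \prod_j \frac{1}{(|N_j(T)|-1)!}$. To verify this, choose a pivot $\pi_j \in N_j(T)$ for each $j$; then $\{e_k - e_{\pi_j} : k \in N_j(T) \setminus \{\pi_j\},\, j \in [n]\}$ is a lattice basis for the source of $\phi_T$. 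Viewing these $d-1$ vectors as edges in a graph on $[d]$, the graph is obtained from the spanning tree $B_T \subseteq B_A$ by contracting each right vertex $j$ to its pivot $\pi_j$; a short path-tracing argument (walking along the unique $B_T$-path between two left vertices and routing via the chosen pivots) shows this contracted graph is always a spanning tree of $K_d$. Hence its edge vectors form a $\ZZ$-basis of $\ZZ^d \cap \{\sum x_i = 0\}$, confirming that $\phi_T$ preserves lattice volume.

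Combining the two steps, and using $|N_j(T)| = \RD(T)_j$, yields the desired formula. The main obstacle is the lattice identification in the second paragraph: it is not enough that $\phi_T$ be an affine isomorphism of polytopes, one must also verify that the Jacobian has determinant $\pm 1$ in the appropriate lattice bases. This is precisely where the spanning tree structure of $B_T$ is essential, and can equivalently be viewed as transporting the unimodularity of the simplex of $\cS(A)$ corresponding to $T$ across the Cayley bijection to the mixed cell $C_T$.
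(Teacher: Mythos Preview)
Your proposal is correct and follows essentially the same route as the paper's proof: both use the duality of Proposition~\ref{prop: mixedSubdiv} to identify $\cS$ with the top-dimensional cells of $\cM(A)$ and then compute the volume of each fine mixed cell as the product of the volumes of its summand simplices. The paper simply asserts this last fact in one line, whereas you supply the unimodularity justification via the spanning-tree structure of $B_T$; this is a welcome addition of rigor but not a different approach.
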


\begin{proof}
Note that every element $T\in \cS$ is in bijection with a full-dimensional fine mixed cell $M_T$ in ${\mathcal M}(A)$. We can read off the volume of $M_T$ in the following way. Let $B_T$ denote its fine type graph and $\RD(B_T) = (a_1, a_2, \dots, a_n)$ its right degree sequence.  Then one can see that 
\[\vol(M_T)= \prod_{i=1}^n \frac{1}{(a_i - 1)!},\]
\noindent
as the volume of $M_T$ is the product of the volumes of the simplices in the Minkowski sum.
Summing over the elements of $\cS$ provides the desired volume of $P_B$.
\end{proof}

Comparing this with Postnikov's formula for volumes of generalized permutohedra (see \cite{postnikov2009permutohedra}*{Theorem 9.3}) yields the following.

\begin{cor}
Let $A$ be a sufficiently generic $d\times n$ tropical matrix with corresponding hyperplane arrangement $\cH(A)$, and set $B = B_A\subseteq K_{d,n}$. Let $\cS$ be the set of fine types associated to the vertices of the bounded complex associated to $\cH$, and let $\one = (1,\dots, 1)\in \ZZ^n$. Then the $B$-draconian sequences correspond to the set
$$
\{\RD(T)-\one \mid T\in \cS \}.
$$
\end{cor}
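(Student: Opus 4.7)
The plan is to combine the volume formula from Proposition~\ref{prop: volumeGenPerm} with Postnikov's formula \cite[Theorem 9.3]{postnikov2009permutohedra}, which expresses
\[
\vol P_B \;=\; \sum_{\mathbf{j} \in D(B)} \prod_{k=1}^n \frac{1}{j_k!},
\]
where $D(B)$ denotes the set of $B$-draconian sequences. Setting $\phi(T) := \RD(T) - \one$ for $T \in \cS$, the goal is to show $\phi(\cS) = D(B)$ as subsets of $\ZZ^n$.

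The first step is to verify the containment $\phi(\cS) \subseteq D(B)$. By Proposition~\ref{prop: mixedSubdiv}, each vertex $T \in \cS$ corresponds to a full-dimensional fine mixed cell $M_T = \sum_{k=1}^n \Delta_{N_k(T)}$ of the mixed subdivision $\cM(A)$. The defining property of a \emph{fine} mixed cell gives $\dim\bigl(\sum_{k \in S} \Delta_{N_k(T)}\bigr) = \sum_{k \in S}(|N_k(T)| - 1)$ for every nonempty $S \subseteq [n]$, and since this Minkowski sum lies in the affine span of the basis vectors indexed by $\bigcup_{k \in S} N_k(T)$, one obtains
\[
\sum_{k \in S}\bigl(\RD(T)_k - 1\bigr) \;\leq\; \Bigl|\bigcup_{k \in S} N_k(T)\Bigr| - 1,
\]
with equality for $S = [n]$ because $M_T$ is full-dimensional in $P_B$. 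These inequalities are precisely the draconian conditions, so $\phi(T)\in D(B)$.

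For the reverse inclusion $D(B) \subseteq \phi(\cS)$, I would first prove that $\phi$ is injective by exploiting the explicit dictionary in Proposition~\ref{prop: mixedSubdiv}: distinct cells $M_T \neq M_{T'}$ of the same fine mixed subdivision have distinct neighborhood tuples $(N_k(T))_k \neq (N_k(T'))_k$, and I would argue that within a fine mixed subdivision distinct neighborhood tuples yield distinct cardinality profiles (equivalently, distinct draconian shapes). With injectivity in hand, the containment $\phi(\cS) \subseteq D(B)$ combined with the volume equality
\[
\sum_{T \in \cS} \prod_{k=1}^n \frac{1}{\phi(T)_k!} \;=\; \vol P_B \;=\; \sum_{\mathbf{j} \in D(B)} \prod_{k=1}^n \frac{1}{j_k!}
\]
and strict positivity of every summand forces $\phi(\cS) = D(B)$.

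The main obstacle is the injectivity of $\phi$: a priori two distinct fine mixed cells of $\cM(A)$ could share a common draconian shape, differing only in which subsets of the neighborhoods $N_k(B_A)$ are selected. To overcome this, I would combine Postnikov's implicit encoding of draconian sequences by fine mixed cells with the rigidity of the correspondence in Proposition~\ref{prop: mixedSubdiv} between fine types at vertices of $\cB(A)$ and the full-dimensional cells of $\cM(A)$, reducing injectivity to a combinatorial statement about neighborhood cardinalities in a fine mixed subdivision.
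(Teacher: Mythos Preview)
Your overall strategy---compare Proposition~\ref{prop: volumeGenPerm} with Postnikov's volume formula---is exactly what the paper does; the paper's proof is the single sentence ``Comparing this with Postnikov's formula \dots\ yields the following.''  Your containment $\phi(\cS)\subseteq D(B)$ is correct and more explicit than anything the paper writes down.

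The gap is the one you yourself flag: injectivity of $\phi$.  Your proposed route (``distinct neighborhood tuples yield distinct cardinality profiles'') is not an argument, and in fact that assertion is precisely the content you need to prove, not something that drops out of Proposition~\ref{prop: mixedSubdiv}.  Two maximal fine mixed cells $M_T$ and $M_{T'}$ certainly have distinct neighborhood tuples $(N_k(T))_k\neq (N_k(T'))_k$, but nothing in the fine-mixed-cell axioms forbids them a priori from sharing the same vector of cardinalities $(|N_k(T)|)_k$.  Trying to squeeze this out of the volume equality alone is circular.

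The fix is to cite more of Postnikov than just the statement of Theorem~9.3.  In the course of proving that formula Postnikov shows the stronger fact that for \emph{any} fine mixed subdivision of $P_B$, the map sending a maximal cell to its dimension profile is a bijection onto $D(B)$; equivalently, by the Cayley trick this is the statement that in any triangulation of $Q_B$ the spanning trees are determined by their right degree vectors (the dual of \cite[Theorem~12.9]{postnikov2009permutohedra}, which the present paper cites elsewhere).  Once you invoke this bijection directly, both injectivity and surjectivity are immediate and the volume comparison becomes unnecessary.  The paper's one-line proof is tacitly relying on this stronger input, not merely on the equality of the two sums.
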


Given a bipartite graph $B\subseteq K_{d,n}$, this gives a novel method to compute and prove statements about volumes of generalized permutohedra $P_B$:
\begin{enumerate}
    \item Compute the lattice ideal $L$ of the associated root polytope $Q_B$.
    \item Find a sufficiently generic tropical matrix $A$ such that $B_A = B$ and compute $\inn_A L$.
    \item Compute the Alexander dual $(\inn_A L)^\vee = I_{\overline{T}(\cH(A))}$ (see Proposition~\ref{prop: cotypeIdealAD}).
    \item Take the complements of the monomial generators of $I_{\overline{T}(\cH(A))}$ to obtain the set $\cS$ described above.
    \item Compute $P_B$ using the formula in Proposition \ref{prop: volumeGenPerm}.
\end{enumerate}

\begin{example}
Consider the generalized permutohedron defined by $B = K_{3,2}$, so that $P_B$ is the sum of two triangles.  To compute its volume we choose the sufficiently generic tropical matrix $A = \begin{bmatrix} 0 & 0 \\ 0 & 2 \\ 0 & 1 \end{bmatrix}$.  This defines an initial ideal
\[\langle \underline{x_{11}x_{22}} - x_{12}x_{21}, \; \underline{x_{11}x_{32}} - x_{12}x_{31}, \; x_{21}x_{32} - \underline{x_{31}x_{22}} \rangle,\]
\noindent
generated by the underlined monomials. The Alexander dual of this initial ideal is given by the fine cotype ideal
\[I_{\overline{T}(\cH(A))} = \langle x_{11}x_{22}, \; x_{11}x_{31}, \; x_{22}x_{32} \rangle.\]

Each of these generators defines a fine cotype, and the right degree sequences of the corresponding types is given by $(2,2)$, $(1,3)$, $(3,1)$.  This provides cells of volume $1$, $1/2$, $1/2$, implying that $P_B$ has volume $2$.  Note also that the $G$-draconian sequences for the case $G=K_{3,2}$ are given by $(1,1), (0,2)$, and $(2,0)$.
\end{example}

\section{Lattice Ideals of Root Polytopes as Toric Edge Ideals}\label{sec: lattice}

In this section we describe a connection between cotype ideals associated to tropical hyperplane arrangements and the class of \emph{toric edge ideals}.  The key observation here is that the lattice ideal of a root polytope $Q_B$ for a \emph{bipartite} graph $B$ is exactly the toric edge ideal of the corresponding graph $B$, see Lemma \ref{lem: bipartite}. With this we are able to describe the generators of $Q_B$ using a well-known result of Villarreal, and describe certain classes of lattice ideals which are well-studied by commutative algebraists. We then use results from Section \ref{sec: cellRes} to show that the regularity of a toric edge ideal of a bipartite graph $B$ coincides with the dimension of a corresponding bounded tropical complex.  This leads to new proofs of existing bounds on homological invariants as well as new results. We begin by establishing notation and recalling some basic results regarding toric edge ideals.
\begin{notation}
Suppose $G = (V,E)$ is a finite simple graph with vertex set $V = \{v_1, \dots, v_n\}$ and edge set $E = \{e_1, \dots, e_k\}$. If $\kk$ is any field, let $\kk[E]$ and $\kk[V]$ denote the polynomial rings in these indeterminates. 
\end{notation}

\begin{definition}\label{def: toric+edge+ideal}
Suppose $G = (V,E)$ is a finite simple graph. The \emph{edge ring} $\kk[G]$ is the $\kk$-algebra generated by the monomials $x_ix_j$ where $\{i,j\} \in E(G)$.

We define a monomial map
\begin{align*}
    \pi:\kk[E] &\rightarrow \kk[V] \\
    e_j &\mapsto v_{j_1}v_{j_2}
\end{align*}
where $e_j = \{v_{j_1}, v_{j_2}\}$.  The \emph{toric edge ideal} of $G$ is defined to be the ideal $J_G = \ker \pi$, so that 
\[\kk[G] \cong \kk[E]/J_G.\]
\end{definition}

An effective way to study the edge ring of a graph $G$ is via a certain polytope associated to $G$ called the \textit{edge polytope}. 

\begin{definition}\label{def: edgePolytope}
Let $G$ be a finite simple graph with vertex set $V = [n]$. For an edge $e =\{i,j\}$ in $G$ define $\rho(e) \in {\mathbb R}^n$ by $\rho(e) = {\bf e}_i + {\bf e}_j$, where ${\bf e}_i$ is the $i$th unit coordinate vector in ${\mathbb R}^n$. The \emph{edge polytope} ${\mathcal P}_G$ is the convex hull of all such vectors
\[{\mathcal P}_G = \conv\{\rho(e) \mid e \in E(G)\} \subset {\mathbb R}^n.\]
\end{definition}

One can see that for any graph $G$, the toric edge ideal $J_G$ is recovered as the toric ideal of the edge polytope ${\mathcal P}_G$. For this note that ${\mathcal P}_G \cap {\mathbb Z}^n = \{\rho(e): e \in E(G)\}$ and also that the vertices of ${\mathcal P}_G$ coincide with ${\mathcal P}_G \cap {\mathbb Z}^n$ (see \cite{HHOBinomial}*{Lemma 5.2}).  For the case that $G = B \subset K_{d,n}$ is \emph{bipartite} the ideal $J_G$ can be recovered in another way. The proof of the following lemma is straightforward.

\begin{lemma}\label{lem: bipartite}
If $B \subset K_{d,n}$ is a bipartite graph, then the toric edge ideal $J_B$ is equal to the lattice ideal of the root polytope $Q_B$.  
\end{lemma}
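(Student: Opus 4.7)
The plan is to exhibit both ideals as kernels of toric maps attached to two closely related lattice point configurations, and then observe that bipartiteness forces the two configurations to define the same binomial relations.

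First I would fix notation: write $V(B) = \{x_1,\dots,x_d\} \sqcup \{y_1,\dots,y_n\}$ with the bipartition inherited from $K_{d,n}$, and let $E(B)$ consist of edges $(i,j)$ with $i \in [d]$, $j \in [n]$. Under this convention, the toric edge ideal $J_B$ is by Definition~\ref{def: toric+edge+ideal} the kernel of the monoid homomorphism
\[
\pi \colon \kk[x_{ij} \mid (i,j) \in E(B)] \longrightarrow \kk[x_1,\dots,x_d,y_1,\dots,y_n], \qquad x_{ij} \longmapsto x_i y_j,
\]
while the lattice ideal of $Q_B$ is, by \eqref{eq: latticeIdeal}, the kernel of
\[
\varphi_{\cA} \colon \kk[x_{ij} \mid (i,j) \in E(B)] \longrightarrow \kk[x_1,\dots,x_d,y_1^{-1},\dots,y_n^{-1}], \qquad x_{ij} \longmapsto x_i y_j^{-1}.
\]

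Next I would note that both ideals are binomial (toric) ideals: a binomial $\xx^{\uu} - \xx^{\vv}$ lies in $\ker\pi$, respectively $\ker\varphi_{\cA}$, if and only if
\[
\sum_{(i,j) \in E(B)} (u_{ij} - v_{ij})\bigl(\ee_i + \ee_{\overline{j}}\bigr) = 0, \qquad \text{resp.} \qquad \sum_{(i,j) \in E(B)} (u_{ij} - v_{ij})\bigl(\ee_i - \ee_{\overline{j}}\bigr) = 0,
\]
as integer vectors in $\ZZ^{d+n}$. Because $B$ is bipartite, each lattice vector has exactly one nonzero entry in the first $d$ coordinates (at position $i$) and exactly one nonzero entry in the last $n$ coordinates (at position $\overline{j}$). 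Projecting the dependency onto the first $d$ coordinates yields $\sum_j (u_{ij}-v_{ij}) = 0$ for each $i \in [d]$ in both cases, while projecting onto the last $n$ coordinates yields $\sum_i (u_{ij}-v_{ij}) = 0$ for each $j \in [n]$ in both cases (the two systems differ only by a global sign flip in the last $n$ equations). The two conditions are therefore equivalent, so $\ker\pi = \ker\varphi_{\cA}$.

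The main obstacle is really just keeping the notational conventions straight between the two monomial maps and the two ambient rings; once the identification above is observed, no further algebraic work is required. Equivalently (and perhaps more conceptually), one can describe a $\kk$-algebra isomorphism between the image subalgebras $\kk[\pi(x_{ij})]$ and $\kk[\varphi_{\cA}(x_{ij})]$ given by $y_j \mapsto y_j^{-1}$, which makes the identification of kernels a formality. I would close with a brief remark pointing out that bipartiteness is essential: for a graph containing an odd cycle, the kernel of $\pi$ picks up extra even-walk binomials that have no analogue in the lattice ideal of the vertex configuration $\{\ee_i - \ee_{\overline j}\}$, so the two ideals genuinely differ in the non-bipartite setting.
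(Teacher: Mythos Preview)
Your proof is correct and follows essentially the same approach as the paper: both argue that the kernel of $x_{ij}\mapsto x_i y_j$ coincides with that of $x_{ij}\mapsto x_i y_j^{-1}$ because, in the bipartite case, the $y$-variables carry a uniform sign which can be flipped by the isomorphism $y_j\mapsto y_j^{-1}$ without affecting the kernel. The paper states this in one sentence, while you additionally spell out the equivalent linear conditions on $\uu-\vv$; your alternative remark about the $\kk$-algebra isomorphism is exactly the paper's argument.
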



Recall that the Krull dimension of a toric ring associated to a matrix $M$ is given by $\rank(M)$ 
\cite{HHOBinomial}*{Proposition 3.1}.  The toric edge ring $\kk[B]$ of a bipartite graph $B \subset K_{d,n}$ is the toric edge ring of its incidence matrix, which is known to have rank $d + n -1$.  Hence we get the following.

\begin{cor}\label{cor: krulldim}
For a bipartite graph $B \subset K_{d,n}$ the Krull dimension of $\kk[B]$ is given by
\[\dim(\kk[B]) = d+n-1.\]
\end{cor}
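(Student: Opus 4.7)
The plan is a direct application of two standard facts, both essentially pointed out in the paragraph preceding the corollary. By Lemma~\ref{lem: bipartite}, the ideal $J_B$ coincides with the lattice ideal of the root polytope $Q_B$, which in turn (in the bipartite case) coincides with the toric ideal of the edge polytope $\cP_B$. Thus $\kk[B] = \kk[E]/J_B$ is the toric ring associated to the $(d+n) \times |E(B)|$ matrix $M_B$ whose column indexed by an edge $(i,j) \in E(B)$ is the vector ${\bf e}_i + {\bf e}_{\overline{j}}$. This $M_B$ is precisely the vertex-edge incidence matrix of $B$, viewed as a graph on $d+n$ vertices. By the standard dimension formula for toric rings \cite[Proposition 3.1]{HHOBinomial}, we then have $\dim(\kk[B]) = \rank(M_B)$.

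So the task reduces to showing $\rank(M_B) = d+n-1$. Since $B$ is bipartite, every column of $M_B$ has exactly one nonzero entry among its first $d$ coordinates and exactly one among its last $n$ coordinates; consequently the sum of the first $d$ rows of $M_B$ equals the sum of the last $n$ rows (both are the all-ones vector indexed by edges), producing a nontrivial linear dependence and hence the upper bound $\rank(M_B) \leq d+n-1$. For the reverse inequality, I would fix a spanning tree $T$ of $B$, which has $d+n-1$ edges since $B$ is connected, and argue that the corresponding $d+n-1$ columns of $M_B$ are linearly independent by inductively peeling off leaves of $T$: the column indexed by a leaf edge of $T$ is the unique column (among those of $T$) with a nonzero entry in the row of the leaf vertex, so it cannot participate in any linear relation.

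The proof is essentially bookkeeping given the two cited facts, and there is no serious obstacle. The only subtlety worth flagging is that connectedness of $B$ is needed for the equality to hold on the nose; without it the same reasoning yields $\dim(\kk[B]) = d + n - c(B)$, where $c(B)$ is the number of connected components. Connectedness is the standing assumption under which the corollary is applied throughout Section~\ref{sec: edge+ideal+results}, in particular in conjunction with Theorem~\ref{thm: regdim}, so this is the desired statement in context.
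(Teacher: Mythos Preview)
Your proof is correct and follows the same route as the paper: identify $\kk[B]$ as the toric ring of the incidence matrix and invoke \cite[Proposition 3.1]{HHOBinomial} to equate Krull dimension with the rank of that matrix. The paper simply asserts that this rank is $d+n-1$ as a known fact, whereas you supply the argument (row dependence for the upper bound, spanning-tree columns for the lower bound); you also correctly flag that connectedness of $B$ is needed for the stated equality, an assumption the paper leaves implicit in the corollary but uses throughout.
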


Toric edge rings are a particular example of a construction called \textit{special fiber rings}, which are blow-up algebras that give insight into the \textit{Rees algebra} of an ideal but which are often more tractable to compute.
In general, there is interest in bounding the regularity of the Rees algebra of an ideal $I$ because this provides information about the regularity of powers of $I$.
Finding the defining ideal of the special fiber ring is a challenging problem and is still open for many classes of ideals. 
However, a famous result of Villarreal \cite{villarreal1995rees} tells us that we can write down a minimal generating set for the special fiber of the edge ideal of a bipartite graph by studying its minimal cycles.

The following proposition gives an explicit generating set for the toric edge ideal of a graph.

\begin{prop}\label{prop: villarreal}\cite{villarreal1995rees}*{Proposition 3.1} Let $G$ be a finite simple graph.
For any even closed walk $\ww = (i_1,j_1,\dots, i_k,j_k)$
in $G$, define a binomial
$$
f_\ww \coloneqq
x_{i_1 j_1} x_{i_2 j_2} \dots x_{i_{k} j_{k}} - x_{j_1 i_2} x_{j_2 i_3} \dots x_{j_{k-1} i_k} x_{j_k i_1}.
$$
Then the toric edge ideal $J_G$ is generated by all $f_\ww$ where $\ww$ is an even closed walk in $G$. In particular, if $G$ is bipartite then $J_G$ is generated by all $f_\ww$ where $\ww$ is a minimal even cycle in $G$.
\end{prop}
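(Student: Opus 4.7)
The plan is to prove the two containments $\langle f_\ww : \ww \text{ even closed walk}\rangle \subseteq J_G$ and $J_G \subseteq \langle f_\ww \rangle$ separately, then refine the second to minimal cycles in the bipartite case.

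For the forward containment, I would fix an even closed walk $\ww = (i_1, j_1, \dots, i_k, j_k)$ and compute $\pi(f_\ww)$ directly. Applying $\pi$ to the first monomial yields $\prod_{s=1}^k v_{i_s} v_{j_s}$, while applying $\pi$ to the second yields $\prod_{s=1}^k v_{j_s} v_{i_{s+1}}$ with indices taken mod $k$; after reindexing this is the same product, so $\pi(f_\ww) = 0$ and thus $f_\ww \in J_G$.

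For the reverse containment, I would invoke the standard fact that toric ideals are generated by pure difference binomials: $J_G$ is generated by binomials $\xx^\aa - \xx^\bb$ with $\supp(\aa) \cap \supp(\bb) = \emptyset$ and $\pi(\xx^\aa) = \pi(\xx^\bb)$. The second condition translates to the combinatorial statement that the two edge multisets $E_\aa$ and $E_\bb$ induce the same vertex-degree sequence on $V(G)$. Proceeding by induction on $|\aa| = |\bb|$, I would extract an even closed walk $\ww$ via an Eulerian-style greedy construction: color the edges of $E_\aa$ red and those of $E_\bb$ blue so that every vertex has equal red and blue degree, then starting from any vertex traverse edges alternately of opposite color; the balanced degree condition ensures the traversal can always continue, and pigeonhole forces a return to the starting vertex. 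The associated binomial $f_\ww$ divides $\xx^\aa - \xx^\bb$ (up to a monomial factor $\xx^\cc$), and subtracting $\xx^\cc f_\ww$ strictly reduces the total degree, completing the induction.

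For the bipartite refinement, I would exploit the fact that bipartite graphs admit no odd cycles, so every closed walk has even length automatically. To reduce to \emph{minimal} even cycles, I would argue by induction on the length of $\ww$: if $\ww$ is not a chordless cycle, then it either repeats a vertex or admits a chord in $B$, and in either case the binomial $f_\ww$ can be written as $f_{\ww_1} + \xx^\cc f_{\ww_2}$ for strictly shorter even closed walks $\ww_1, \ww_2$. The main obstacle is the Eulerian alternating walk construction and verifying the chord-splitting identity: one must handle the case where walks revisit edges (so only vertex degrees, not edge multiplicities, are balanced between $E_\aa$ and $E_\bb$), and one must carefully align the splitting so both summands take the form $f_{\ww'}$. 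Both steps appear in Villarreal's original argument in \cite{villarreal1995rees}, which I would invoke for the combinatorial bookkeeping.
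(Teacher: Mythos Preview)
The paper does not give a proof of this proposition at all; it is simply quoted from Villarreal's paper \cite{villarreal1995rees} as a known result and used as a black box. Your sketch is essentially a reconstruction of Villarreal's original argument (as you acknowledge at the end), and the outline is sound. One minor imprecision: subtracting $\xx^\cc f_\ww$ from $\xx^\aa - \xx^\bb$ does not by itself lower the total degree; rather, the identity
\[
\xx^\aa - \xx^\bb \;=\; \xx^{\aa-\aa'} f_\ww \;+\; \xx^{\bb'}\bigl(\xx^{\aa-\aa'} - \xx^{\bb-\bb'}\bigr)
\]
expresses it as a monomial multiple of $f_\ww$ plus a monomial multiple of a binomial in $J_G$ of strictly smaller degree, which is what drives the induction. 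With that adjustment your argument goes through.
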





A bipartite graph $B$ is \emph{chordal bipartite} if it contains no induced cycles of length longer than four. In \cite{ohsugihibikoszul1999}, the following algebraic characterization of the toric edge rings of such graphs is given.

\begin{theorem}\cite{ohsugihibikoszul1999} \label{thm: chordalbip}
Suppose $B$ is a finite connected bipartite graph. Then the following are equivalent.

\begin{enumerate}
    
    \item
    $B$ is chordal bipartite;
    \item
    The toric edge ideal $J_B$ admits a quadratic binomial  Gr\"obner basis;
    \item
    The edge ring $\kk[B]$ is Koszul;
    \item
    The toric edge ideal $J_B$ is generated by quadratic binomials.
    \end{enumerate}

\end{theorem}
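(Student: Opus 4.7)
The plan is to prove the cycle (1) $\Rightarrow$ (2) $\Rightarrow$ (3) $\Rightarrow$ (4) $\Rightarrow$ (1). The implications (2) $\Rightarrow$ (3) and (3) $\Rightarrow$ (4) are standard results in commutative algebra: the quotient of a polynomial ring by an ideal admitting a quadratic Gröbner basis is Koszul (Backelin--Fröberg), and a Koszul algebra is by definition generated by quadratic relations, which here must be binomials since $J_B$ is a toric ideal.

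The implication (4) $\Rightarrow$ (1) follows directly from Proposition~\ref{prop: villarreal}. Indeed, for bipartite $B$ the ideal $J_B$ is minimally generated by the binomials $f_\mathbf{w}$ corresponding to minimal even cycles $\mathbf{w}$ of $B$, and each such $f_\mathbf{w}$ has degree $|\mathbf{w}|/2$. If $J_B$ is generated in degree $2$ then every minimal even cycle has length $4$, so $B$ has no induced cycle of length $\geq 6$; that is, $B$ is chordal bipartite.

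The main obstacle is (1) $\Rightarrow$ (2), which I would prove by leveraging the tropical perspective of Section~\ref{sec: cellRes}. For a sufficiently generic tropical matrix $A$ with $B_A = B$, Proposition~\ref{prop: cotypeIdealAD} together with Lemma~\ref{lem: bipartite} identifies the initial ideal $\inn_A(J_B)$ with the Stanley--Reisner ideal of the regular triangulation $\cS(A)$ of the root polytope $Q_B$. This initial ideal is generated in degree $2$ if and only if $\cS(A)$ is a \emph{flag} triangulation, i.e., each of its minimal non-faces has exactly two vertices. Thus (1) $\Rightarrow$ (2) reduces to constructing, for every chordal bipartite $B$, a sufficiently generic $A$ whose induced triangulation $\cS(A)$ is flag.

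To build such an $A$ I would exploit the perfect edge elimination ordering enjoyed by chordal bipartite graphs: order the edges $e_1, \ldots, e_m$ of $B$ so that each $e_i$ is simplicial relative to the subgraph on $\{e_i, \ldots, e_m\}$, and lift the vertex of $Q_B$ corresponding to $e_i$ to height $N^i$ for $N \gg 0$. The chordal bipartite hypothesis should then guarantee that every minimal non-face of $\cS(A)$ arises from an induced $4$-cycle $(i,j,k,l)$ in $B$, contributing the quadratic relation $x_{il}x_{kj} - x_{ij}x_{kl}$ to the Gröbner basis. The hard part will be verifying that no larger minimal non-faces appear: this requires a careful induction tracking how the perfect elimination structure propagates through the regular subdivision, and is precisely where the absence of induced longer cycles is used in an essential way.
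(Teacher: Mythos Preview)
The paper does not prove this theorem at all: it is quoted from Ohsugi--Hibi with the citation \cite[Theorem]{ohsugihibikoszul1999} and no proof is given. So there is no ``paper's own proof'' to compare against.

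Your implications (2) $\Rightarrow$ (3) $\Rightarrow$ (4) $\Rightarrow$ (1) are correct and standard. For (1) $\Rightarrow$ (2), your tropical strategy is a legitimate route in principle, and there is no circularity (the machinery of Section~\ref{sec: cellRes} does not rely on Theorem~\ref{thm: chordalbip}). However, the proposal is not a proof: you explicitly identify the flagness verification as ``the hard part'' and then do not carry it out. Saying that a perfect-edge-elimination lifting ``should'' produce only quadratic non-faces is precisely the content of the implication, and the induction you allude to is the entire difficulty. As written this is a plan, not an argument.

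For comparison, the original Ohsugi--Hibi proof is more direct and avoids triangulations entirely: one labels the vertices so that the bipartite adjacency matrix has a staircase (doubly lexical) form, takes the resulting reverse-lexicographic (or diagonal) term order, and checks Buchberger's criterion on the $2\times 2$ minors coming from the $4$-cycles. The chordal-bipartite hypothesis is exactly what makes every $S$-pair reduce to zero using only other $4$-cycle binomials. If you want to complete your tropical version, you would effectively be reproving this via Sturmfels' correspondence between squarefree initial ideals and unimodular triangulations---which is fine, but you still owe the reduction argument.
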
    

Toric edge ideals of chordal bipartite graphs also have a connection to certain \emph{determinantal} ideals.  For this recall that for a $d \times n$ tropical matrix $A$ we use $X_A$ to denote the matrix associated to the finite entries $A$, see Notation \ref{not: cellRings}.  We will let $L_A$ denote the lattice ideal associated to the root polytope $Q_{B_A}$.
Using the language established in Laura Ballard's thesis \cite{ballard2021properties}, a $2$-minor of $X_A$ is called a \emph{distinguished minor} if it is a $2$-minor involving only nonzero entries of $X_A$. 
The following result also appeared in \cite{ballard2021properties}.

\begin{prop}\label{prop: ladder-like} Suppose $A$ is a tropical matrix with bipartite graph $B_A$. If $B_A$ is chordal bipartite, then the lattice ideal associated to the root polytope $Q_{B_A}$ is generated by all distinguished minors of $X_A$.
\end{prop}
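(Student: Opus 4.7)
The plan is to derive the proposition directly from three ingredients already in hand: Lemma~\ref{lem: bipartite}, Theorem~\ref{thm: chordalbip}, and Proposition~\ref{prop: villarreal}. First, Lemma~\ref{lem: bipartite} identifies the lattice ideal of $Q_{B_A}$ with the toric edge ideal $J_{B_A}$, so it suffices to show that when $B_A$ is chordal bipartite, $J_{B_A}$ is generated by the distinguished $2$-minors of $X_A$.

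Next, I would invoke the implication $(1) \Rightarrow (4)$ of Theorem~\ref{thm: chordalbip} to conclude that $J_{B_A}$ is generated by quadratic binomials. Combining this with Villarreal's description in Proposition~\ref{prop: villarreal}, each such quadratic generator must arise from an even closed walk of length $4$ in $B_A$ — i.e., from a $4$-cycle $(i_1, j_1, i_2, j_2)$ — and the associated binomial is $x_{i_1 j_1} x_{i_2 j_2} - x_{i_2 j_1} x_{i_1 j_2}$.

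The final step is a direct combinatorial identification between $4$-cycles in $B_A$ and distinguished submatrices of $X_A$. A $4$-cycle $(i_1, j_1, i_2, j_2)$ in $B_A$ is precisely the data of four edges $\{i_1,j_1\}, \{i_2,j_1\}, \{i_2,j_2\}, \{i_1,j_2\}$, which is equivalent to the corresponding four entries of $X_A$ all being nonzero — i.e., to a $2 \times 2$ submatrix of $X_A$ that is distinguished in Ballard's sense. Under this correspondence Villarreal's binomial is literally the $2 \times 2$ minor of the submatrix. Conversely, any distinguished $2$-minor of $X_A$ lies in the kernel of the map $\pi$ from Definition~\ref{def: toric+edge+ideal} and hence in $J_{B_A}$. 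So the two generating sets coincide, proving the claim.

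There is no substantive algebraic or geometric obstacle in this argument; the only step requiring mild care is unwinding the combinatorial correspondence to check that the indices in Villarreal's cycle formula line up with the classical $2 \times 2$ minor, which is a routine bookkeeping exercise. The proposition is essentially a translation of the algebraic content of Theorem~\ref{thm: chordalbip} into the matrix language provided by $X_A$.
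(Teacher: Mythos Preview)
Your proof is correct and follows essentially the same approach as the paper's own argument: identify the lattice ideal with $J_{B_A}$ via Lemma~\ref{lem: bipartite}, use Villarreal's description to reduce to $4$-cycles, and then match $4$-cycles with distinguished $2$-minors of $X_A$. The only minor difference is that the paper gets to ``$4$-cycles generate'' directly from the definition of chordal bipartite (minimal even cycles must have length four) rather than passing through Theorem~\ref{thm: chordalbip}; your detour is unnecessary but harmless.
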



Observe that in the case that the tropical matrix $A$ has full support, $L_A$ is generated by all $2$-minors of a generic $d\times n$ matrix as in \cite{block2006tropical}.

\begin{example}[Ladder determinantal ideals, Ferrers graphs]\label{ex: ladder} Suppose the tropical matrix $A$ satisfies the following property:
$$
\text{ if $(i,j),(h,k)\in \Supp(A)$ and $i\leq h, j\leq k$, then $(i,k),(h,j)\in\Supp(A)$}.
$$
Then $B_A$ is chordal bipartite and $L_A$ is called a \emph{ladder determinantal ideal}.
Ladder determinantal ideals have been well-studied in commutative algebra and algebraic geometry; see work of Conca \cites{conca1995, conca1996gorenstein}, Gorla \cite{gorla2007mixed}, and more recently Rajchgot, Robichaux, and Weigandt \cite{rajchgot2022castelnuovo}.
One particular case of this example is the case of toric ideals associated to \textit{Ferrers graphs}. A Ferrers graph $F$ is a bipartite graph such that if $(i,j)$ is an edge of $F$, then so is $(p,q)$ for $1\leq p\leq i$ and $1\leq q \leq j$. In addition, $(1, n)$ and $(d, 1)$ are required to be edges of $F$. The monomial edge ideals and toric edge ideals of these graphs were studied by Corso and Nagel in \cites{corso2008specializations, corso2009}.
\end{example}

\begin{example}\label{ex: latticeIdealLadder}
We return to our running Example \ref{ex: running}, whose corresponding tropical hyperplane arrangement is depicted in Figure \ref{fig: tropHAex}. In this case the matrix $X_A$ is
\[X_A = \begin{bmatrix} x_{11} & x_{12} & 0 & x_{14}\\ x_{21} & x_{22} & x_{23} & x_{24} \\ x_{31} & 0 & x_{33} & 0 \end{bmatrix}.\]
The lattice ideal $L_A$ is generated by all the distinguished $2$-minors of this matrix:
$$
\ker\psi = \langle x_{11} x_{22} - x_{12} x_{21}, x_{21} x_{33} - x_{23} x_{31}, x_{11} x_{24} - x_{14} x_{21}, x_{12}x_{24} - x_{14}x_{22} \rangle.$$
Note that one can realize $L_A$ as a determinantal ideal by rearranging the columns of $A$.
\end{example}

\begin{example}\label{ex: latticeIdealNonLadder}
For an example of a lattice ideal that is not a ladder determinantal ideal, consider the tropical hyperplane arrangement corresponding to the tropical matrix
\[A = \begin{bmatrix} 0 & 1 & \infty \\ 0 & \infty & 0 \\ \infty & 0 & 1 \end{bmatrix}.\]
Then the corresponding lattice ideal is
$
\langle x_{11} x_{23} x_{32} - x_{21} x_{33} x_{12}\rangle .
$
\end{example}

In the case that $B$ has no cycles, we easily recover a result of Postnikov \cite{postnikov2009permutohedra}*{Lemma 12.5}.

\begin{cor} Suppose that $B = B_A$ is a forest. Then $Q_B$ is a simplex.
\end{cor}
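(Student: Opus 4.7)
The plan is to chain together two results already established in the paper: Villarreal's description of generators of the toric edge ideal (Proposition~\ref{prop: villarreal}) together with the identification of $J_B$ with the lattice ideal of $Q_B$ for bipartite graphs (Lemma~\ref{lem: bipartite}). Since a forest $B$ contains no cycles whatsoever, it in particular has no even closed walks, and so Proposition~\ref{prop: villarreal} gives $J_B = 0$. By Lemma~\ref{lem: bipartite}, the lattice ideal of the root polytope $Q_B$ then also vanishes.

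The lattice ideal of $Q_B$ is, by Definition~\ref{def: latticeIdeal}, the kernel of the monomial map $\varphi_{\cA}$ with $\cA = \{{\bf e}_i - {\bf e}_{\bar j} : (i,j) \in E(B)\}$. A lattice ideal is generated by binomials encoding the integer relations among its defining exponent vectors, so its vanishing is equivalent to the vectors in $\cA$ being linearly independent in $\mathbb{R}^{d+n}$. Since linear independence of a finite set of vectors immediately implies their affine independence, the $|E(B)|$ vectors defining $Q_B$ are affinely independent. In particular each is an extreme point of their convex hull, so $Q_B$ has exactly $|E(B)|$ vertices in affinely general position and is therefore a simplex of dimension $|E(B)| - 1$.

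There is no real obstacle here, and this is precisely why the result is phrased as ``easily recovering'' Postnikov's statement: the combinatorial hypothesis (no cycles) is translated by Villarreal's theorem into the purely algebraic statement $J_B = 0$, which in turn is the geometric statement of affine independence via the dictionary provided by Lemma~\ref{lem: bipartite}. The only point requiring a sentence of justification is the equivalence between vanishing of the lattice ideal and linear independence of the generating vectors, which one could alternatively verify by a direct flow-balancing argument on the forest.
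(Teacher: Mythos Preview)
Your argument is correct, and the overall strategy matches the paper's: both start from Proposition~\ref{prop: villarreal} to conclude that $J_B = L_A = 0$ when $B$ is a forest. One small imprecision: a forest does contain even closed walks (any back-and-forth traversal of an edge), so you should invoke the bipartite clause of Proposition~\ref{prop: villarreal}, which says $J_B$ is generated by binomials from minimal even \emph{cycles}, of which a forest has none.

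Where you and the paper diverge is in the final step. The paper argues that since $L_A = 0$, every initial ideal is zero, so by the Sturmfels correspondence $Q_B$ admits no nontrivial (regular) triangulation and must therefore be a simplex. You instead unpack $L_A = 0$ as linear independence of the vectors $\{{\bf e}_i - {\bf e}_{\bar j}\}$, hence affine independence, hence $Q_B$ is a simplex. Your route is arguably cleaner: it avoids appealing to the bijection between initial ideals and regular triangulations, and the implication ``no nontrivial regular triangulation $\Rightarrow$ simplex'' that the paper leaves implicit. On the other hand, the paper's phrasing keeps the argument entirely in the language already set up in Section~\ref{sec: lattice}, whereas your version requires the reader to accept (or verify) that vanishing of a toric ideal is equivalent to linear independence of the defining exponent vectors.
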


\begin{proof}
If $B$ is a forest, then it contains no cycles and the generating set of $L_A$ is empty. Therefore, $Q_B$ has no nontrivial triangulations and must itself be a simplex.
\end{proof}

\subsection{Krull dimension of type ideals} \label{sec: Krull}

Most of our results involve applying methods from tropical geometry to say something about toric edge rings, this is the focus of Section~\ref{sec: edge+ideal+results}. We can also apply results in the other direction to understand algebraic properties of type ideals.

\begin{prop}
Suppose $d, n \geq 2$. Let $A$ be a sufficiently generic $d \times n$ tropical matrix with corresponding bipartite graph $B = B_A$ and tropical hyperplane arrangement ${\mathcal H}(A)$. Then the Krull dimension of the fine cotype ideal satisfies
\
\[\dim(I_{\overline{T}(\cH)}) \leq |E(B)| - 2,\]
with equality if and only if $B$ is bipartite chordal.
\end{prop}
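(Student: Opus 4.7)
The plan is to exploit the Alexander-duality identification in Proposition~\ref{prop: cotypeIdealAD}. Since $A$ is sufficiently generic, $\inn_A(L)$ is a squarefree monomial ideal (it is the Stanley--Reisner ideal of the triangulation $\cS(A)$ of $Q_B$), so $M(\inn_A(L)) = \inn_A(L)$ and the proposition gives $I_{\overline{T}(\cH)} = (\inn_A(L))^{\vee}$. The standard formula for the height of an Alexander-dual squarefree monomial ideal then yields
\[
\dim(\widetilde{S}/I_{\overline{T}(\cH)}) \;=\; |E(B)| - \mathrm{ht}\bigl((\inn_A L)^{\vee}\bigr) \;=\; |E(B)| - \mu,
\]
where $\mu$ denotes the minimum degree among minimal generators of $\inn_A(L)$.

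Next I would translate $\mu$ into combinatorial data of $B$. Because passing to an initial ideal preserves the Hilbert function, $\mu$ coincides with the minimum degree of a minimal generator of $L = J_B$ itself. By Proposition~\ref{prop: villarreal}, the minimal generators of $J_B$ are the binomials $f_\ww$ attached to minimal even cycles $\ww$ in $B$, each of degree equal to half the length of $\ww$. Bipartiteness of $B$ forces every cycle to have even length at least $4$, hence $\mu \geq 2$; this immediately yields the upper bound $\dim(\widetilde{S}/I_{\overline{T}(\cH)}) \leq |E(B)| - 2$.

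For the equality case, observe that $\mu = 2$ if and only if $J_B$ admits a quadratic binomial generator, equivalently if and only if $B$ contains a $4$-cycle. The ``if'' direction of the chordal bipartite characterization is then immediate from Theorem~\ref{thm: chordalbip}: when $B$ is chordal bipartite (and contains a cycle, which under the hypotheses $d,n\geq 2$ can be reduced to), $J_B$ is generated entirely by quadratic binomials, so certainly contains a degree-$2$ generator, forcing $\mu = 2$. The main obstacle will be the ``only if'' direction, as $B$ merely containing a $4$-cycle is a priori weaker than $B$ being chordal bipartite. The natural strategy is to argue that under the sufficient-genericity hypothesis on $A$, the existence of a chordless cycle of length $\geq 6$ in $B$ propagates into the Stanley--Reisner structure of $\inn_A(L)$ in a way that obstructs equality; equivalently, one aims to show that any chordless long cycle forces an additional constraint on the minimal non-faces of $\cS(A)$ that strictly lowers $\dim(\widetilde{S}/I_{\overline{T}(\cH)})$ below $|E(B)| - 2$. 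Completing this implication, perhaps via Theorem~\ref{thm: chordalbip} applied contrapositively together with a careful genericity argument on the triangulation, is the delicate step.
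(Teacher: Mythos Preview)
Your approach via Alexander duality is sound and in the same spirit as the paper's. The identification $\dim(\widetilde{S}/I_{\overline{T}(\cH)}) = |E(B)| - \mu$, where $\mu$ is the minimal generating degree of $\inn_A(L)$, is correct, and your observation that $\mu$ equals the minimal generating degree of $L = J_B$ itself (via preservation of the Hilbert function) is the key step. This yields the inequality immediately from the fact that bipartite graphs have girth at least $4$, and the ``if'' direction follows since any chordal bipartite graph containing a cycle has girth exactly $4$.

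Your suspicion about the ``only if'' direction is justified, but the strategy you sketch cannot work. By your own formula, $\dim(\widetilde{S}/I_{\overline{T}(\cH)}) = |E(B)| - \tfrac{1}{2}\mathrm{girth}(B)$ depends only on $B$, not on the particular sufficiently generic $A$; no genericity argument on the triangulation can distinguish ``contains a $4$-cycle'' from ``chordal bipartite''. Concretely, take a $4$-cycle and a chordless $6$-cycle joined by a single edge: this bipartite graph has girth $4$, hence achieves equality, yet is not chordal bipartite. The paper's proof does not close this gap either. It invokes Theorem~\ref{thm: chordalbip} to obtain, for chordal bipartite $B$, a term order whose initial ideal is quadratic (hence flag Stanley--Reisner complex, hence equality), but the converse direction of Theorem~\ref{thm: chordalbip} only says that a non--chordal-bipartite $B$ admits no \emph{entirely} quadratic Gr\"obner basis; it does not preclude a quadratic minimal generator in $\inn_A(L)$, which is all that equality requires. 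So the obstruction you located is a genuine gap in the statement as written, not a defect in your argument.
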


\begin{proof}
First recall that $I_{\overline{T}(\cH)}$ is a squarefree monomial ideal in the polynomial ring $\widetilde{S}$ with $|E(B)|$ indeterminates (see Notation \ref{not: cellRings} and Definition \ref{def: typeIdeals}).  Let $\Delta_{\mathcal H}$ denote the associated Stanley-Reisner simplicial complex, so that $\dim(I_{\overline{T}(\cH)}) = \dim (\Delta_{\mathcal H})$. 
To see that $\dim(I_{\overline{T}(\cH)}) \leq |E(B)| - 2$, note that since $d, n \geq 2$ we have that a generic element in $\troptorus{d}$ must sit in some region of 2 distinct tropical hyperplanes.  Hence $\Delta_{\mathcal H}$ has a missing edge, from which it follows that $\dim(\Delta_{\mathcal H}) \leq 2$.  This proves the first claim.

For the case of equality, recall from Theorem \ref{thm: chordalbip} that $B$ is bipartite chordal if and only if $J_B$ admits a Gr\"obner basis consisting of quadratic binomials. The term order underlying such a basis corresponds (via its weight vector) to a regular triangulation of the root polytope $Q_B$. Let $A$ be the corresponding $d \times n$ tropical matrix (with finite entries corresponding to the height of the lifted vertices of $Q_B$, and with infinite coordinates otherwise).  For this term order $\preceq_A$ we have that the initial ideal $\inn_{\preceq_A}(J_B)$ is a quadratic monomial ideal.  If we let $\Gamma_A$ denote the Stanley-Reisner complex of this ideal, it then follows that the minimal nonfaces of $\Gamma_A$ have cardinality 2 (so that $\Gamma_A$ is a so-called \emph{flag} complex). 

This implies that the facets of the Alexander dual complex $\Gamma_A^\vee$ has facets of cardinality $|E(B)| - 2$ (recall that the vertex set of $\Delta_{\mathcal H}$ is given by $E(B)$).  But from Proposition \ref{prop: cotypeIdealAD} we gave that $\Delta_{\mathcal H} = \Gamma_A^\vee$, and the claim follows.
\end{proof}



\section{Homological properties of bipartite toric edge ideals} \label{sec: edge+ideal+results}

In this section we use results from above to investigate homological invariants of toric edge ideals of bipartite graphs.  We take advantage of the fact that for a bipartite graph $B$, the lattice ideal of the root polytope $Q_B$ is exactly the toric edge ideal of $B$. We can then  utilize results of Section \ref{sec: cellRes}. We first define the invariants that we will be interested in.

\begin{definition}\label{def: regularity} Suppose $R = \kk[x_1, \dots, x_m]$ and $M$ is a graded finitely generated $R$-module. The \textit{(Castelnuovo-Mumford) regularity} and \textit{projective dimension} of $M$ are given by the values
\begin{align*}
\reg_R(M) &= \max\{j \mid \beta_{i,i+j}(M) \neq 0 \text{ for some } i\}, \\
\pdim_R(M) &= \max\{i \mid \beta_{i,i+j}(M)\neq 0 \text{ for some } j\},
\end{align*}
where $\beta_{i,k}(M)$ are the Betti numbers of $M$.
\end{definition}

Regularity corresponds to the highest degree that appears in any of the matrices in a minimal free resolution, and projective dimension measures the number of steps needed in constructing the resolution. Hence one may think of these invariants as measuring the ``complexity'' of the module.

\begin{definition}\label{def: linearRes} A finitely generated graded $R$-module $M$ admits a \textit{$q$-linear free resolution} if its Betti numbers satisfy $\beta_{i,j}(M) = 0$ for each $1 \leq i \leq \pdim(M)$ and for each $j \neq q + i -1$.
\end{definition}

For the case of $M = R/I$ this means that $I$ is generated by homogeneous polynomials of degree $q$ and $\reg(R/I) = q-1$.  
It turns out that regularity and projective dimension of $R/I$ for a squarefree monomial ideal $I$ are related via Alexander duality, and a key tool for us will be the following result of Terai.

\begin{theorem}\cite{terai1997generalization}*{Theorem 0.2}\label{thm: projreg}
For $I \subset R$ a squarefree monomial ideal, the projective dimension and regularity satisfy
\[\pdim(R/I) = \reg(I^\vee).\]
\end{theorem}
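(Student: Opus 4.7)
The plan is to reduce both invariants to simplicial topology via Hochster's formula and then identify them via combinatorial Alexander duality.

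First I would write $I = I_\Delta$ as the Stanley-Reisner ideal of a simplicial complex $\Delta$ on vertex set $[m]$, so that $I^\vee = I_{\Delta^\vee}$ with $\Delta^\vee = \{\sigma \subseteq [m] : [m]\setminus\sigma \notin \Delta\}$. Hochster's formula then computes the $\ZZ^m$-graded Betti numbers
\[
\beta_{i,\sigma}(R/I_\Delta) \;=\; \dim_\kk \widetilde{H}_{|\sigma|-i-1}(\Delta|_\sigma;\kk),
\]
and the analogous formula holds for $R/I^\vee = R/I_{\Delta^\vee}$. Reindexing by $k = |\sigma|-i-1$, this yields
\[
\pdim(R/I) \;=\; \max\bigl\{\,|\sigma|-k-1 \,:\, \widetilde{H}_k(\Delta|_\sigma;\kk)\neq 0,\ \sigma\subseteq[m]\,\bigr\},
\]
and, using $\reg(I^\vee) = \reg(R/I^\vee)+1$, an analogous rewrite on the dual side gives
\[
\reg(I^\vee) \;=\; \max\bigl\{\,k'+2 \,:\, \widetilde{H}_{k'}(\Delta^\vee|_\tau;\kk)\neq 0,\ \tau\subseteq[m]\,\bigr\}.
\]

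Next I would invoke the induced-subcomplex version of combinatorial Alexander duality (see e.g.\ \cite{MillerSturmfels:2005}*{Chapter 5}), which for each vertex set $\sigma\subseteq[m]$ provides a canonical identification of $\widetilde{H}_k(\Delta|_\sigma;\kk)$ with the reduced (co)homology of a corresponding subcomplex of $\Delta^\vee$ in degree $|\sigma|-k-3$. This sets up a bijection between pairs $(\sigma,k)$ with nonvanishing homology on the $\Delta$ side and pairs $(\tau,k')$ on the $\Delta^\vee$ side satisfying $k' = |\sigma|-k-3$, and hence $|\sigma|-k-1 = k'+2$. Taking the maximum on both sides then produces the claimed equality $\pdim(R/I) = \reg(I^\vee)$.

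The main obstacle is purely bookkeeping: Hochster's formula contributes a homological shift of $-1$ while combinatorial Alexander duality contributes a further shift of roughly $-3$, and one must be careful to invoke the correct induced-subcomplex form of duality so that the vertex sets on the two sides match up. Once these conventions are lined up, the whole argument collapses to the index arithmetic $|\sigma|-k-1 = (|\sigma|-k-3)+2$ and the identification of maxima.
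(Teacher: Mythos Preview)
The paper does not prove this theorem at all: it is quoted verbatim from Terai \cite{terai1997generalization} and used as a black box, so there is no ``paper's own proof'' to compare against. Your proposal is in fact the standard route to Terai's identity --- Hochster's formula on both sides, linked by combinatorial Alexander duality --- and the strategy is sound.

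There is one genuine technical slip to be aware of. Combinatorial Alexander duality does \emph{not} relate $\Delta|_\sigma$ to a restriction $\Delta^\vee|_\tau$; rather, taking the Alexander dual of $\Delta|_\sigma$ on vertex set $\sigma$ yields the \emph{link} $\operatorname{link}_{\Delta^\vee}([m]\setminus\sigma)$. Thus the isomorphism you want is
\[
\widetilde{H}_k(\Delta|_\sigma;\kk)\;\cong\;\widetilde{H}_{|\sigma|-k-3}\bigl(\operatorname{link}_{\Delta^\vee}([m]\setminus\sigma);\kk\bigr),
\]
and to finish you must pair this with the \emph{link form} of Hochster's formula for $I_{\Delta^\vee}$ (as in \cite{MillerSturmfels:2005}, Theorem~5.59), not the restriction form you wrote for $\reg(I^\vee)$. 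Once you use links on the dual side, the index arithmetic $|\sigma|-k-1=(|\sigma|-k-3)+2$ goes through exactly as you indicate. Your closing remark that ``one must be careful to invoke the correct induced-subcomplex form of duality'' is precisely this point; just make the link explicit.
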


In particular, this implies that $R/I$ has a linear resolution if and only if $R/I^\vee$ is Cohen-Macaulay, a fact first observed by Eagon and Reiner in \cite{eagon1998resolutions}. We will also need the following recent result of Conca and Varbaro.

\begin{theorem}[\cite{conca2020square}]\label{thm: concaVarbaro} Let $I$ be a homogeneous ideal of a polynomial ring $R$ such that $\inn(I)$ is a square-free monomial ideal for some term order. Denote by $h^{ij}(M)$ the dimension of the degree $j$ component of its $i$th local cohomology module $H^i_\m(M)$ supported on the maximal ideal $\m$. 
Then
$$
h^{ij}(R/I) = h^{ij}(R/\inn(I)).
$$
In particular, the depth, regularity, and projective dimension of $S/I$ and $S/\inn(I)$ coincide.
\end{theorem}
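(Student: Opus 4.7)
The plan is to establish the equality $h^{ij}(R/I) = h^{ij}(R/\inn(I))$ by proving two opposing inequalities and then deduce the stated consequences about depth, regularity, and projective dimension from standard local cohomology interpretations of these invariants.

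First I would set up the Gr\"obner degeneration by homogenizing $I$ with respect to a weight representing the term order. This yields a flat family over the affine line whose general fiber is $R/I$ and whose special fiber at $t=0$ is $R/\inn(I)$. In this setting, for each bigraded piece, the Hilbert function of local cohomology satisfies upper-semicontinuity, giving the inequality
\[
h^{ij}(R/I) \leq h^{ij}(R/\inn(I))
\]
for all $i,j$. This direction does not use the squarefree hypothesis and is essentially formal once the flat family is in hand.

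The hard part will be the reverse inequality, which can fail without the squarefree assumption, since initial ideals typically have strictly more complicated local cohomology than the original ideal. My approach would go through positive characteristic and the theory of F-singularities: squarefree monomial rings (Stanley-Reisner rings) are F-pure, and this F-purity interacts with the Gr\"obner deformation to prevent any collapse of local cohomology dimensions when passing from the special fiber back to the generic fiber. Concretely, one would analyze Frobenius-stable submodules of $H^i_{\m}(R/\inn(I))$, argue that their graded dimensions are preserved along the one-parameter family, and then transfer the resulting equality to characteristic zero via a standard spreading-out and reduction-mod-$p$ argument. I expect this F-pure deformation step to be the genuine obstacle, as it requires delicate control over how Frobenius interacts with the flat family and over the compatibility of its stable pieces with the generic fiber.

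For the final assertion, the homological invariants depth, regularity, and projective dimension of $R/I$ can each be read off from the bigraded Hilbert function of local cohomology: depth equals the least $i$ with $H^i_{\m}(R/I)\neq 0$, regularity equals $\max\{i+j : H^i_{\m}(R/I)_j \neq 0\}$, and projective dimension follows via the Auslander-Buchsbaum formula applied to $R$ (which is Cohen-Macaulay). Consequently, equality of $h^{ij}$ for all $i,j$ transfers directly to equality of these three invariants for $R/I$ and $R/\inn(I)$.
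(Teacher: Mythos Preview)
The paper does not supply a proof of this statement at all: Theorem~\ref{thm: concaVarbaro} is quoted verbatim from Conca and Varbaro's paper \cite{conca2020square} and used as a black box in the proofs of Proposition~\ref{prop: CM} and Theorem~\ref{thm: tropicalCplxRegularity}. There is therefore nothing in the paper to compare your proposal against.

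That said, your sketch is a reasonable outline of how the actual Conca--Varbaro argument proceeds: the semicontinuity direction via the flat Gr\"obner family is standard, and the reverse inequality does indeed hinge on the special fiber being F-pure (since squarefree monomial rings are F-pure) combined with a reduction-to-positive-characteristic step. Your identification of the F-pure deformation step as the heart of the matter is correct. If you were asked to reproduce the proof in full you would need to make precise the statement that local cohomology dimensions cannot drop under a deformation whose special fiber is F-pure, which is the main technical contribution of \cite{conca2020square}; your sketch gestures at this but does not explain the mechanism. For the purposes of the present paper, however, no proof is expected---the result is simply invoked.
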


Recall that an $R$-module $M$ is \emph{Cohen-Macaulay} if $\depth(M) = \dim(M)$, where $\dim(M)$ is the Krull dimension of $M$. If $B$ is a bipartite graph, it is known that the toric edge ring $\kk[B]$ is Cohen-Macaulay. This follows by combining the fact that toric rings of bipartite graphs are \emph{normal} (by a result of Villarreal \cite{villarreal1995rees}), along with a celebrated result of Hochster \cite{hochster1972} that says all normal semigroup rings are Cohen-Macaulay.   
We can easily recover this property by applying our results from previous sections.

\begin{prop}\label{prop: CM} For any connected bipartite graph $B$ the toric edge ideal $J_B$ is Cohen-Macaulay.
\end{prop}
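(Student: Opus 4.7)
The plan is to combine Alexander duality for squarefree monomial ideals with the Conca--Varbaro theorem on squarefree initial ideals. The key point is that the toric edge ideal $J_B$ has a squarefree initial ideal whose Alexander dual has already been identified in the paper as a fine cotype ideal, and that cotype ideal is known to have a linear resolution.

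First I would choose a sufficiently generic $d \times n$ tropical matrix $A$ satisfying $B_A = B$. Since $B$ is connected, Corollary \ref{prop: bdd_complex_nonempty} guarantees that the root polytope $Q_B$ admits regular triangulations, so a generic enough $A$ exists. By Lemma \ref{lem: bipartite}, $J_B$ equals the lattice ideal of the unimodular root polytope $Q_B$, and therefore the initial ideal $\inn_A(J_B)$ is a squarefree monomial ideal in $\widetilde{S}$.

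Next I would apply Proposition \ref{prop: cotypeIdealAD}, which identifies the Alexander dual of $\inn_A(J_B)$ with the fine cotype ideal $I_{\overline{T}(\cH(A))}$. By Corollary \ref{cor: linearRes}, this cotype ideal has a linear minimal free resolution. Now invoking the Eagon--Reiner theorem (equivalently, the Terai duality recorded in Theorem \ref{thm: projreg}), the fact that $(\inn_A(J_B))^\vee$ has a linear resolution is equivalent to $\widetilde{S}/\inn_A(J_B)$ being Cohen--Macaulay.

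Finally I would transport Cohen--Macaulayness from the initial ideal back to $J_B$ itself via Theorem \ref{thm: concaVarbaro}. Since $\inn_A(J_B)$ is squarefree, Conca--Varbaro gives
\[
\depth(\widetilde{S}/J_B) \;=\; \depth(\widetilde{S}/\inn_A(J_B)),
\]
and as Krull dimension is always preserved by passing to an initial ideal of a homogeneous ideal, the equality $\depth = \dim$ established for $\widetilde{S}/\inn_A(J_B)$ descends to $\widetilde{S}/J_B = \kk[B]$. There is no real obstacle: every ingredient has been set up earlier in the paper. The only thing one needs to be a bit careful about is confirming the existence of a sufficiently generic $A$ with $B_A = B$ and checking that Corollary \ref{cor: linearRes} applies in the not-necessarily-full-support setting, but both of these follow immediately from the discussion in Sections \ref{sec: arrangements} and \ref{sec: cellRes}.
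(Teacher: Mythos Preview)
Your proposal is correct and follows essentially the same route as the paper's own proof: choose a sufficiently generic $A$ with $B_A=B$, use Corollary~\ref{cor: linearRes} to see that $(\inn_A(J_B))^\vee$ has a linear resolution, invoke Eagon--Reiner/Terai (Theorem~\ref{thm: projreg}) to conclude that $\widetilde S/\inn_A(J_B)$ is Cohen--Macaulay, and then transfer this via Conca--Varbaro (Theorem~\ref{thm: concaVarbaro}). You supply slightly more detail than the paper does (e.g.\ explicitly invoking Lemma~\ref{lem: bipartite} and Proposition~\ref{prop: cotypeIdealAD}, and noting that Krull dimension is preserved under degeneration), but the argument is the same.
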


\begin{proof}
The root polytope $Q_B$ corresponds to a (sufficiently generic) hyperplane arrangement $\cH(A)$ where $A$ is a tropical matrix satisfying $B = B_A$.  By Corollary \ref{cor: linearRes} the bounded complex $\cB(A)$ supports a linear minimal free resolution of the Alexander dual of $\inn_A(J_B)$. It follows from Theorem \ref{thm: projreg} that $\tilde{S}/\inn_A(J_B)$ is Cohen-Macaulay. Applying Theorem \ref{thm: concaVarbaro}, one has $\depth(\tilde{S}/J_B) = \depth(\tilde{S}/\inn_A(J_B))$, and the result follows.
\end{proof}



We will see that our knowledge of a minimal resolution of the fine type ideal allows for a novel way to compute regularity of toric edge rings of bipartite graphs. The main tool in our study is the following result, which provides a tropical geometric interpretation for the regularity of a toric edge ring.


\begin{theorem}\label{thm: tropicalCplxRegularity}
Suppose $A$ is a sufficiently generic $d \times n$ tropical matrix.  Let $\cH = \cH(A)$ denote the corresponding arrangement of $n$ tropical hyperplanes in $\RR^d/\RR\one$ with bounded complex $\cB(A)$. If $B = B_A$ is the bipartite graph corresponding to finite entries of $A$ we have
\[\reg(\kk[B]) = \dim(\cB(A)).\]
\end{theorem}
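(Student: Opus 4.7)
The plan is to chain together the commutative-algebra tools compiled above so as to convert $\reg(\kk[B])$ through a sequence of equalities to $\dim(\cB(A))$. First, by Lemma~\ref{lem: bipartite} we identify $J_B$ with the lattice ideal $L_A$ of $Q_{B_A}$, so $\kk[B]=\widetilde{S}/L_A$. Since $A$ is sufficiently generic and $Q_{B_A}$ is unimodular, the standard correspondence between regular triangulations and monomial initial ideals ensures that $\inn_A(L_A)$ is a squarefree monomial ideal. We may therefore invoke Conca--Varbaro (Theorem~\ref{thm: concaVarbaro}) to obtain
\[
\reg(\kk[B]) \;=\; \reg\bigl(\widetilde{S}/L_A\bigr) \;=\; \reg\bigl(\widetilde{S}/\!\inn_A(L_A)\bigr).
\]

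Next I would convert this regularity equality into a statement about projective dimension via Alexander duality. Writing $I:=\inn_A(L_A)$ and applying Terai's theorem (Theorem~\ref{thm: projreg}) to $I^{\vee}$ yields $\reg(I)=\pdim(\widetilde{S}/I^{\vee})$, and hence $\reg(\widetilde{S}/I)=\pdim(\widetilde{S}/I^{\vee})-1$. The squarefreeness of $I$ means $M(\inn_A(L_A))=\inn_A(L_A)$, so Proposition~\ref{prop: cotypeIdealAD} identifies $I^{\vee}$ exactly with the fine cotype ideal $I_{\overline{T}(\cH)}$. To finish, I would use the geometric input: by Proposition~\ref{prop: cotypeCellRes} the bounded complex $\cB(A)$ supports a minimal cellular resolution of $I_{\overline{T}(\cH)}$, so counting cells by dimension gives
\[
\pdim\bigl(\widetilde{S}/I_{\overline{T}(\cH)}\bigr) \;=\; \dim\cB(A)+1.
\]
Stringing these equalities together yields $\reg(\kk[B])=\dim\cB(A)$, as desired.

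I do not anticipate a serious obstacle beyond bookkeeping, since all of the required components are in place; however, the step demanding the most care is the squarefreeness of $\inn_A(L_A)$, as it is what simultaneously licenses the application of Conca--Varbaro and forces $M(\inn_A(L_A))=\inn_A(L_A)$ so that Proposition~\ref{prop: cotypeIdealAD} lands on the full fine cotype ideal rather than a proper subideal. This is precisely where the hypothesis that $A$ be sufficiently generic is indispensable, via unimodularity of $Q_{B_A}$ and the Gr\"obner-theoretic translation of regular triangulations into squarefree initial ideals from \cite{sturmfels1996grobner}. One should also verify the standard off-by-one in cellular resolutions (that a minimal cellular resolution of an ideal $I$ supported on a polyhedral complex $\mathcal{P}$ yields $\pdim(R/I)=\dim\mathcal{P}+1$), but this is immediate from the shape of the resulting chain complex.
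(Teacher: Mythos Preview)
Your proof is correct and follows essentially the same route as the paper's own argument: identify $J_B$ with the lattice ideal via Lemma~\ref{lem: bipartite}, use squarefreeness of $\inn_A(J_B)$ (from sufficient genericity and unimodularity) to invoke Conca--Varbaro, then apply Terai's duality and the minimal cellular resolution of the fine cotype ideal on $\cB(A)$ to read off the projective dimension. The only cosmetic difference is that the paper packages the Alexander-dual/cellular-resolution step via Corollary~\ref{cor: cellResDual}, whereas you cite Propositions~\ref{prop: cotypeIdealAD} and~\ref{prop: cotypeCellRes} separately.
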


\begin{proof}
By Lemma \ref{lem: bipartite}, the toric edge ideal $J_B$ is exactly the lattice ideal of the root polytope $Q_B$. 
By Corollary \ref{cor: cellResDual}, the bounded complex $\cB(A)$ supports a minimal free resolution of $\tilde{S}/I$, where $I$ is the Alexander dual of the ideal $\inn_A(J_B)$. Hence we have that $\pdim(\tilde{S}/I) = \dim(\cB(A))+1$. 
From Theorem \ref{thm: projreg} we also have $\reg(\inn_A(J_B)) = \pdim(\tilde{S}/I)$. 
Moreover, $\inn_A(J_B)$ is a \textit{squarefree} initial ideal of $J_B$, so the result follows from Theorem \ref{thm: concaVarbaro} above and the fact that $\reg(\kk[B]) = \reg(J_B)-1$.
\end{proof}

While Theorem~\ref{thm: tropicalCplxRegularity} gives a precise formula for the regularity of $J_B$, a priori it depends on the matrix $A$.
Proposition~\ref{prop: bounded+complex+dimension} gives a sufficiently generic $A$ such that the regularity can be characterized in purely in terms of graph properties of $B$.  In what follows recall that $\lambda(B)$ denotes $\emph{recession connectivity}$ of a bipartite graph $B$, see Definition \ref{def: recession}.

\begin{cor}\label{cor: graph+regularity}
Let $B$ be a finite simple connected bipartite graph, then $\reg(\kk[B]) = \lambda(B)-1$.
\end{cor}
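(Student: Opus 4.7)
The plan is to combine Theorem~\ref{thm: tropicalCplxRegularity} with Proposition~\ref{prop: bounded+complex+dimension}. Since $\reg(\kk[B])$ is an intrinsic invariant of $B$, while the bounded complex $\cB(A)$ depends on a choice of sufficiently generic tropical matrix $A$ with $B_A = B$, the first step is to pick an especially convenient $A$.

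Specifically, by the second half of Proposition~\ref{prop: bounded+complex+dimension} there exists a sufficiently generic tropical matrix $A$ satisfying $B_A = B$ and $\dim(\cB(A)) = \lambda(B) - 1$. Applying Theorem~\ref{thm: tropicalCplxRegularity} to this particular $A$ yields
\[
\reg(\kk[B]) \;=\; \dim(\cB(A)) \;=\; \lambda(B) - 1,
\]
which is the desired identity.

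As a consistency remark, one could equally well argue that \emph{every} sufficiently generic $A$ with $B_A = B$ must achieve the upper bound $\lambda(B) - 1$: indeed, Theorem~\ref{thm: tropicalCplxRegularity} forces $\dim(\cB(A))$ to equal the $A$-independent quantity $\reg(\kk[B])$, while Proposition~\ref{prop: bounded+complex+dimension} bounds it above by $\lambda(B)-1$ and exhibits one $A$ attaining equality. Hence the dimension of the bounded complex is a genuine invariant of the bipartite graph $B$ (at least on the sufficiently generic locus), which is a satisfying byproduct of the corollary.

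There is no real obstacle: the corollary is almost a formal consequence of the two ingredients already in hand, so the proof is essentially a one-line citation. The only subtlety worth flagging is the implicit claim that the $A$ constructed in the proof of Proposition~\ref{prop: bounded+complex+dimension} is indeed sufficiently generic in the sense required by Theorem~\ref{thm: tropicalCplxRegularity}; this is true because the construction there explicitly produces a regular triangulation of $Q_B$, which is exactly the sufficient genericity hypothesis.
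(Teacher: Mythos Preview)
Your proof is correct and follows exactly the approach the paper takes: the corollary is deduced immediately from Theorem~\ref{thm: tropicalCplxRegularity} combined with the existence of a sufficiently generic $A$ achieving $\dim(\cB(A)) = \lambda(B)-1$ from Proposition~\ref{prop: bounded+complex+dimension}. Your consistency remark is precisely the content of the paper's subsequent Corollary~\ref{cor: bounded+complex+equality}, and your observation about sufficient genericity of the constructed $A$ is handled in the proof of Proposition~\ref{prop: bounded+complex+dimension} just as you indicate.
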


As the regularity of $\kk[B]$ is invariant under any choice of sufficiently generic $A$ satisfying $B = B_A$, we get that the dimension of the bounded complex is also invariant of $B$.

\begin{cor}\label{cor: bounded+complex+equality}
Let $A$ be a sufficiently generic tropical matrix with corresponding tropical hyperplane arrangement ${\mathcal H}(A)$ and bipartite graph $B_A$.
Then $\dim(\cB(A)) = \lambda(B_A) - 1$.
\end{cor}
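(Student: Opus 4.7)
The plan is to observe that this corollary is essentially immediate once we combine three previously established results in the right order, exploiting the fact that the regularity of $\kk[B]$ depends only on $B$ and not on any choice of lifting $A$.

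First I would invoke Proposition \ref{prop: bounded+complex+dimension}, which already gives the upper bound $\dim(\cB(A)) \leq \lambda(B_A) - 1$ for every tropical matrix $A$ with $B_A = B$ (in particular, for every sufficiently generic one), and which also produces at least one specific sufficiently generic tropical matrix $A^\ast$ realizing equality $\dim(\cB(A^\ast)) = \lambda(B) - 1$.

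Next I would apply Theorem \ref{thm: tropicalCplxRegularity} to this distinguished matrix $A^\ast$: since $A^\ast$ is sufficiently generic with $B_{A^\ast} = B$, we have
\[
\reg(\kk[B]) \;=\; \dim(\cB(A^\ast)) \;=\; \lambda(B) - 1.
\]
The point is that $\reg(\kk[B])$ is an invariant of $B$ alone, so this common value $\lambda(B)-1$ must equal $\dim(\cB(A))$ for every other sufficiently generic $A$ with $B_A = B$, again by Theorem \ref{thm: tropicalCplxRegularity}. This proves the corollary, and as a byproduct also gives Corollary \ref{cor: graph+regularity}.

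There is no real obstacle to overcome beyond assembling the pieces correctly; the subtle point worth flagging is the logical direction. Proposition \ref{prop: bounded+complex+dimension} on its own only guarantees the dimension equality for one carefully constructed $A^\ast$, not for arbitrary sufficiently generic $A$. The passage from ``one $A$'' to ``all sufficiently generic $A$'' is exactly what Theorem \ref{thm: tropicalCplxRegularity} supplies, because that theorem forces $\dim(\cB(A))$ to agree with the purely algebraic quantity $\reg(\kk[B])$ for every such $A$. Thus the tropical--algebraic bridge, rather than any additional combinatorial argument, is what upgrades the existence statement in Proposition \ref{prop: bounded+complex+dimension} to a universal statement. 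I would write the proof as two short lines citing these two results in this order.
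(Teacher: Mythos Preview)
Your proposal is correct and matches the paper's approach exactly: the paper states Corollary~\ref{cor: graph+regularity} and Corollary~\ref{cor: bounded+complex+equality} in quick succession, deducing both from the observation that Theorem~\ref{thm: tropicalCplxRegularity} forces $\dim(\cB(A))$ to agree with the $B$-invariant quantity $\reg(\kk[B])$, which Proposition~\ref{prop: bounded+complex+dimension} then pins down as $\lambda(B)-1$ via one well-chosen $A^\ast$. Your explicit flagging of the ``one $A$ to all $A$'' step is precisely the content the paper gestures at in the sentence preceding the corollary.
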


\subsection{Warm-up: Complete bipartite graphs}

We first study the case of toric edge ideals $J_B$ in the case that $B = K_{d,n}$ is a complete bipartite graph.  These ideals are very well understood but we will see that the tropical perspective lends further insight into their homological properties.  We first note that for $B =K_{d,n}$ the corresponding arrangement consists of $n$ tropical hyperplanes in $\troptorus{d}$, each of which has full support.  The algebraic properties of the corresponding type and cotype ideals were studied in \cite{dochtermann2012tropical}.  

\begin{prop}\label{prop: completebipartite}
Suppose $A$ is a sufficiently generic tropical matrix with only finite entries, and let ${\mathcal H} = {\mathcal H}(A)$ denote the corresponding arrangement of $n$ tropical hyperplanes in $\troptorus{d}$. Then the fine cotype ideal $I_{\overline{T}(\cH)}$ satisfies
\begin{equation}
\begin{split}
\reg (I_{\overline{T}(\cH)})  & = (d-1)(n-1), \\
\pdim (I_{\overline{T}(\cH)}) & = \min\{d,n\} - 1.
\end{split}
\end{equation}
\end{prop}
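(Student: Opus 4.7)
The plan is to read both invariants directly off the minimal cellular resolution of $I_{\overline{T}(\cH)}$ supported on $\cB(A)$, exploiting the fact that in the full-support case the associated bipartite graph is $B_A = K_{d,n}$ and the root polytope is $Q_{K_{d,n}} = \Delta_{d-1}\times \Delta_{n-1}$. First I would invoke Corollary \ref{cor: cellResDual} to obtain that $\cB(A)$ supports a minimal cellular resolution of $I_{\overline{T}(\cH)} = (\inn_A(L))^\vee$, and Corollary \ref{cor: linearRes} to conclude that this resolution is linear. Both target formulas will then drop out of this single resolution: the length controls projective dimension, while linearity forces regularity to equal the common degree of the generators.

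For the projective dimension, minimality of the cellular resolution immediately yields $\pdim(I_{\overline{T}(\cH)}) = \dim(\cB(A))$. Applying Corollary \ref{cor: bounded+complex+equality} gives $\dim(\cB(A)) = \lambda(K_{d,n}) - 1$, and Lemma \ref{lem: recbipartite} evaluates this to $\min\{d,n\}-1$, yielding the second formula.

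For the regularity, linearity reduces the claim to computing the common degree of the minimal generators of $I_{\overline{T}(\cH)}$, which are the cotype monomials labelling the vertices of $\cB(A)$. I would then observe, via the Cayley-trick duality of Proposition \ref{prop: mixedSubdiv}, that each such vertex is dual to a top-dimensional simplex of the regular triangulation $\cS(A)$ of $Q_{K_{d,n}}$. Since $\dim Q_{K_{d,n}} = d+n-2$, each such simplex has $d+n-1$ vertices; under the identification of vertices of $Q_{K_{d,n}}$ with edges of $K_{d,n}$, this forces the corresponding fine type graph $B_T$ to be a spanning tree of $K_{d,n}$. Equivalently, one can extract this from Lemma \ref{lem:type+facts}(3), which requires $B_T$ to be connected at a $0$-dimensional cell and, under sufficient genericity, to have no extra edges. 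The cotype monomial then uses precisely the complementary edges, and has degree $|E(K_{d,n})| - (d+n-1) = dn - d - n + 1 = (d-1)(n-1)$, giving $\reg(I_{\overline{T}(\cH)}) = (d-1)(n-1)$.

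The only step requiring any genuine care is the spanning-tree identification for vertices of $\cB(A)$, but this is a standard fact about fine triangulations of a product of simplices and is essentially the content of the Cayley trick in this setting. Everything else is bookkeeping once the cellular resolution is in hand, so I do not expect any serious obstacle in this warm-up case.
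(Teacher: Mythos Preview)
Your proposal is correct and follows essentially the same approach as the paper: both read the invariants off the minimal linear cellular resolution of $I_{\overline{T}(\cH)}$ supported on $\cB(A)$, using $\dim(\cB(A))$ for projective dimension and the common generator degree for regularity. The only difference is cosmetic---in computing the generator degree, the paper picks the apex of a hyperplane and counts sectors directly to get $(d-1)(n-1)$, whereas you argue via the spanning-tree characterization of the type graph at a $0$-cell; both reach the same number by the same underlying combinatorics.
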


\begin{proof}
Recall from Proposition \ref{prop: cotypeCellRes} that a minimal resolution of the fine type cotype ideal $I_{\overline{T}(\cH)}$ is supported on the bounded tropical complex $\cB(A)$.  From Proposition \ref{prop: bounded+complex+dimension} and Lemma \ref{lem: recbipartite} we see that $\dim(\cB_A) =min\{d,n\} -1$ and hence the formula for projective dimension follows.

For the regularity claim, note that since $A$ is sufficiently generic we have from Corollary \ref{cor: linearRes} that the fine cotype ideal $I_{\overline{T}(\cH)}$ has a linear resolution. 
Hence the regularity of $I_{\overline{T}(\cH)}$ is equal to the (common) degree of its generators.  
Note that if $v$ is the apex of any hyperplane of ${\mathcal H}(A)$ then it is not contained in $d-1$ regions of the remaining $n-1$ tropical hyperplanes.  But $v$ corresponds to a $0$-cell of the bounded complex, and hence a generator of $I_{\overline{T}(\cH)}$ of degree $(n-1)(d-1)$, as desired.
\end{proof}

Applying Proposition \ref{prop: completebipartite} and Theorem \ref{thm: tropicalCplxRegularity}, we recover a result from the literature regarding the toric edge rings of complete bipartite graphs (see for instance \cite{ha2019}*{Lemma 3.10}).
\begin{cor}
For $B = K_{d,n}$ a complete bipartite graph, the toric edge ring $\kk[B]$ satisfies
\begin{equation*}
\begin{split}
\reg (\kk[B]) = \min\{d,n\} - 1, \\
\pdim (\kk[B]) = (d-1)(n-1).
\end{split}
\end{equation*}
\end{cor}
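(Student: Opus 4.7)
The plan is to deduce the corollary by pushing the homological invariants of the fine cotype ideal $I_{\overline{T}(\cH)}$ given in Proposition \ref{prop: completebipartite} across to $\kk[K_{d,n}] = \widetilde{S}/J_B$, combining Alexander duality (Terai's theorem) on one side with the Conca-Varbaro comparison between a homogeneous ideal and its squarefree initial ideal on the other.

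Concretely, since $A$ is sufficiently generic with full support, Proposition \ref{prop: cotypeIdealAD} (together with unimodularity of $Q_B$, which forces $M(\inn_A(J_B)) = \inn_A(J_B)$) identifies $I_{\overline{T}(\cH)}$ as the Alexander dual of the squarefree monomial ideal $\inn_A(J_B)$. Terai's theorem (Theorem \ref{thm: projreg}), applied in both directions using double Alexander duality, then gives
\[\pdim(\widetilde{S}/\inn_A(J_B)) = \reg(I_{\overline{T}(\cH)}) \quad \text{and} \quad \reg(\inn_A(J_B)) = \pdim(\widetilde{S}/I_{\overline{T}(\cH)}).\]
By Theorem \ref{thm: concaVarbaro} of Conca-Varbaro, the regularity and projective dimension of $J_B$ and $\inn_A(J_B)$ coincide. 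Plugging in the values $\reg(I_{\overline{T}(\cH)}) = (d-1)(n-1)$ and $\pdim(I_{\overline{T}(\cH)}) = \min\{d,n\} - 1$ from Proposition \ref{prop: completebipartite}, and translating between $J_B$ and $\kk[B] = \widetilde{S}/J_B$ via the standard shifts $\pdim(R/I) = \pdim(I) + 1$ and $\reg(R/I) = \reg(I) - 1$, yields the two claimed formulas.

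An alternative, even more direct route is available. The regularity follows immediately from Corollary \ref{cor: graph+regularity} applied to $B = K_{d,n}$ together with the computation $\lambda(K_{d,n}) = \min\{d,n\}$ of Lemma \ref{lem: recbipartite}. The projective dimension then follows at once from the Auslander-Buchsbaum formula: since $\kk[K_{d,n}]$ is Cohen-Macaulay by Proposition \ref{prop: CM}, has Krull dimension $d+n-1$ by Corollary \ref{cor: krulldim}, and sits inside the polynomial ring $\widetilde{S}$ in $dn$ variables, we obtain
\[\pdim(\kk[K_{d,n}]) = \dim \widetilde{S} - \depth \kk[K_{d,n}] = dn - (d+n-1) = (d-1)(n-1).\]

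No step here presents a genuine obstacle: the substantive work has already been carried out in Proposition \ref{prop: completebipartite} and in the general machinery of Section \ref{sec: cellRes} and Section \ref{sec: edge+ideal+results}. The role of this corollary is essentially to repackage that calculation in the language of toric edge ideals, illustrating how the tropical/Alexander-duality perspective recovers known formulas for $\kk[K_{d,n}]$.
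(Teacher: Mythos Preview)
Your first route is exactly the argument the paper intends: it cites Proposition \ref{prop: completebipartite} and Theorem \ref{thm: tropicalCplxRegularity}, and unpacking the latter's proof gives precisely the Alexander duality (Terai) plus Conca--Varbaro chain you wrote out. Your second route via Corollary \ref{cor: graph+regularity}, Lemma \ref{lem: recbipartite}, and Auslander--Buchsbaum is a correct and slightly more self-contained alternative for the projective dimension, but it is not what the paper does; the paper stays within the cotype-ideal/duality framework rather than invoking Cohen--Macaulayness and Krull dimension separately.
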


Note that in this case we have $\reg(\kk[B]) = \mat(B) - 1$, where $B$ is the \emph{matching number} of $B$. 
We discuss this connection in futher detail below.

\subsection{Regularity bounds for general bipartite graphs}

We next consider toric edge ideals of arbitrary connected bipartite graphs.  Most of our results will be applications of Proposition \ref{thm: tropicalCplxRegularity}, where the regularity of a toric edge ring is described in terms of the dimension of a certain tropical complex. Our first observation is subgraph monotonicity in the regularity of toric edge rings of bipartite graphs.

\begin{theorem}\label{thm: subgraph}
Suppose $B \subseteq K_{d,n}$ is a connected bipartite graph and let $B^\prime \subseteq B$ be a connected subgraph.  Then we have
\[\reg(\kk[B^\prime]) \leq \reg(\kk[B]).\]
\end{theorem}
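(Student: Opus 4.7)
The plan is to reduce this to the combinatorial monotonicity already established for the recession connectivity $\lambda$. By Corollary~\ref{cor: graph+regularity}, for any connected bipartite graph $H$ we have the identity
\[
\reg(\kk[H]) \;=\; \lambda(H) - 1,
\]
so the inequality $\reg(\kk[B']) \leq \reg(\kk[B])$ is equivalent to $\lambda(B') \leq \lambda(B)$.

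The latter is exactly the content of Lemma~\ref{lem: lambda+monotone}, which asserts that $\lambda$ is monotone under taking connected subgraphs of a connected bipartite graph. Thus the proof reduces to a single sentence: apply Corollary~\ref{cor: graph+regularity} to both $B$ and $B'$, and invoke Lemma~\ref{lem: lambda+monotone} to conclude. The hard work — namely, showing that a subgraph $S'\subseteq B'$ realizing $\lambda(B')$ can be extended, via a spanning tree argument, to a subgraph $S\subseteq B$ with strongly connected recession graph and at least as many components — has already been carried out in the proof of Lemma~\ref{lem: lambda+monotone}.

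It is worth noting that this is precisely where the tropical perspective pays off: the analogous result for induced subgraphs (in the non-bipartite setting) required separate work in~\cite{ha2019}, whereas here the passage from a combinatorial monotonicity statement about bipartite graphs to the homological statement about toric edge rings is immediate once the regularity has been identified with the dimension of a bounded tropical complex. No additional obstruction appears, since there is no requirement that $B'$ be an induced subgraph — arbitrary connected subgraphs suffice because the recession connectivity $\lambda$ is defined by an optimization over \emph{all} subgraphs of the ambient bipartite graph, and the extension procedure in Lemma~\ref{lem: lambda+monotone} only uses that $B'$ (and hence any spanning forest of $B'$) embeds into $B$.
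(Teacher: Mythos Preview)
Your proposal is correct and essentially identical to the paper's own proof, which reads in its entirety: ``This follows directly from Lemma~\ref{lem: lambda+monotone} and Corollary~\ref{cor: graph+regularity}.'' Your additional commentary on why the non-induced case goes through is accurate and a nice gloss, but the core argument is the same two-line reduction.
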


\begin{proof}
This follows directly from Lemma \ref{lem: lambda+monotone} and Corollary \ref{cor: graph+regularity}.
\end{proof}

In \cite{ha2019} the authors consider a similar question in the context of arbitrary (not necessarily bipartite) graphs.  They prove in \cite{ha2019}*{Theorem 3.7} that if $G$ is any graph with \emph{induced} subgraph $G^\prime$ then we have $\reg(\kk[G^\prime]) \leq \reg(\kk[G])$. We note that the induced property is necessary here: for instance if $G^\prime$ is the graph on $7$ vertices consisting of a pair of triangles connected by a path of length $2$ (for a total of 8 edges) then we have $\reg(\kk[G^\prime]) = 3$, whereas $\reg(\kk[G]) = 2$ for any graph on $7$ vertices with 19 edges.  We thank Tina O'Keefe for providing us with this example. In this example we see that $\kk[G^\prime]$ is not normal. Recall that the toric edge ring of any bipartite graph is normal, and hence a natural question that arises in the following.

\begin{question}\label{question: regularity-subgraphs}
Suppose $G$ is a connected graph with connected subgraph $G^\prime$ such that both $\kk[G]$ and $\kk[G^\prime]$ are normal.  Is it true that $\reg(\kk[G^\prime]) \leq \reg(\kk[G])$?
\end{question}

After posting a version of this paper to the arXiv, Akiyoshi Tsuchiya provided a positive answer to our question.  Here we include a proof based on his ideas. 





\begin{proof}
    As $\kk[G]$ is normal, it follows that the Hilbert function of $\kk[G]$ is equal to the Ehrhart function of the edge polytope $\cP_G$, see~\cite{HHOBinomial}*{Lemma 4.22(b)}.
    Explicitly, the Hilbert series of $\kk[G]$ is given by
    \[
    H(\kk[G], t) = \frac{h_0^* + h_1^*t + \dots + h_d^*t^d}{(1-t)^d}
    \]
    where $\dim(\cP_G) = d-1$ and $(h_0^*, h_1^*, \dots, h_d^*) \in \ZZ_{\geq 0}^{d+1}$ is the $h^*$-vector of $\cP_G$.
    Let $(h_0^{*'}, h_1^{*'}, \dots, h_d^{*'})$ be the $h^{*}$-vector of $\cP_{G'}$, note that $\cP_{G'}$ is a subpolytope of $\cP_{G}$.
    Therefore by Stanley's monotonicity theorem~\cite{stanley1993}, we have $h_i^{*'} \leq h_i^*$ for all $0\leq i \leq d$.
    Since $\kk[G]$ is normal, we have from \cite{BenVar}*{Lemma 2.5} that $\reg(\kk[G]) = \max\{s \mid h_s^* \neq 0\}$ . The result follows.
\end{proof}

We next use tropical methods to establish new regularity bounds on toric edge rings.

\begin{theorem}\label{thm: regbound}
Suppose $B$ is a bipartite graph with bipartition $V=\{v_1, \dots, v_n\} \cup \{w_1, \dots, w_d\}$ for $n, d \geq 2$. Let $r= |\{v_i:\deg(v_i) = 1\}|$ and $s = |\{w_j: \deg(w_j) = 1\}|$.  Then
\[ \reg(\kk[B]) \leq \min\{n-r, d-s\}-1.\]
\end{theorem}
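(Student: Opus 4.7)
By Corollary~\ref{cor: graph+regularity} we have $\reg(\kk[B]) = \lambda(B) - 1$, so the theorem reduces to the combinatorial claim $\lambda(B) \leq \min\{n-r,\, d-s\}$. Proposition~\ref{prop: bounded+duality} together with Corollary~\ref{cor: bounded+complex+equality} shows that $\lambda$ is invariant under swapping the two sides of the bipartition of $B$, so it suffices to establish $\lambda(B) \leq n-r$; the symmetric argument applied to the graph with sides interchanged then yields $\lambda(B) \leq d-s$.

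To prove $\lambda(B) \leq n-r$, let $X = \{v_i : \deg_B(v_i) = 1\}$, so $|X| = r$, and let $E_X$ be the set of their $r$ pendant edges. Form the subgraph $B' = B \setminus X \subseteq K_{d,\,n-r}$; as the vertices of $X$ are leaves, $B'$ remains connected (the exceptional case where $B$ is itself a star is handled directly). Choose $S \subseteq B$ realizing $\lambda(B)$, with $\bddgraph{S}{B}$ strongly connected and $c(S) = \lambda(B)$, and (as in the proof of Proposition~\ref{prop: bounded+complex+dimension}) assume $S$ is a forest. Note that every edge of $E_X$ must lie in $S$: otherwise the incident vertex $v \in X$ would be a sink in $\bddgraph{S}{B}$, contradicting strong connectedness. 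Define $S' = S \setminus E_X$, viewed as a subgraph of $B'$.

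The heart of the argument consists of two claims: (i) $\bddgraph{S'}{B'}$ is strongly connected, and (ii) $c(S') = c(S)$. For (i), any directed path in $\bddgraph{S}{B}$ that visits a vertex of $X$ must both enter and leave via its unique bidirected edge to its sole neighbor, so the path can always be shortened to bypass $X$ entirely; reachability among vertices of $V(B) \setminus X$ is therefore inherited by $\bddgraph{S'}{B'}$. For (ii), the vertices of $X$ are leaves of the forest $S$, so pruning them preserves the tree structure of each component of $S$; the only way a component could collapse is if it were itself a star centered at a single $w$-vertex with all leaves in $X$, but such a configuration would render $w$ unreachable from outside $\{w\} \cup X$ in $\bddgraph{S}{B}$, which (since $B$ is not itself such a star) contradicts strong connectedness.

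Combining (i) and (ii) with Lemma~\ref{lem: lambda+monotone} applied to $B' \subseteq K_{d,\,n-r}$ and Lemma~\ref{lem: recbipartite} then gives
\[
\lambda(B) \;=\; c(S) \;=\; c(S') \;\leq\; \lambda(B') \;\leq\; \lambda(K_{d,\,n-r}) \;=\; \min\{d,\,n-r\} \;\leq\; n-r,
\]
as desired. The main obstacle is the careful bookkeeping in claim (ii): one must rule out the degenerate possibility that removing the leaves of $X$ collapses an entire component of $S$ to an isolated $w$-vertex in $S'$, and this is precisely where the (implicit) non-star hypothesis on $B$ enters the argument.
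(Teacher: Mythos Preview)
Your argument is correct and is precisely the combinatorial recession-graph approach that the paper sketches in the paragraph immediately \emph{after} its own proof of the theorem. The paper's primary proof, however, takes a different and more geometric route: rather than passing through $\lambda(B)$ via Corollary~\ref{cor: graph+regularity}, it works directly with $\dim(\cB(A))$ using Theorem~\ref{thm: tropicalCplxRegularity}. The key geometric observation is that a column of $A$ with a single finite entry defines an empty tropical hyperplane (one sector, no vanishing locus), so deleting those $r$ columns leaves $\cB(A)$ unchanged while producing a $d\times(n-r)$ matrix $A'$; then Proposition~\ref{prop: bounded+duality} gives $\dim(\cB(A))=\dim(\cB(A'^T))\le (n-r)-1$ since $\cB(A'^T)$ lives in $\troptorus{n-r}$. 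The $d-s$ bound follows symmetrically from $A^T$.

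The tradeoff is the expected one. The paper's geometric proof is shorter and sidesteps the bookkeeping you carry out in claims (i) and (ii). Your combinatorial proof stays entirely within graph theory and in fact proves the slightly sharper statement $\lambda(B)=\lambda(B')$ (pruning one side's leaves preserves recession connectivity), which the paper only hints at. Two small remarks: the symmetry $\lambda(B)=\lambda(B^T)$ can be seen directly from the definition (reversing all arc orientations in $\bddgraph{S}{B}$ preserves strong connectivity) without invoking Proposition~\ref{prop: bounded+duality} and Corollary~\ref{cor: bounded+complex+equality}; and your analysis of the ``star component'' obstruction in claim~(ii) is in fact equally needed for claim~(i), since an isolated $w$ in $S'$ would have no incoming arc in $\bddgraph{S'}{B'}$.
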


\begin{proof}
Let $A$ be a sufficiently generic tropical matrix satisfying $B = B_A$.  Let ${\mathcal H}(A)$ denote the corresponding arrangement of tropical hyperplanes with bounded complex $\cB(A)$. By Theorem \ref{thm: tropicalCplxRegularity}, it suffices to show that $\dim(\cB(A)) \leq \min\{n-r,d-s\} -1$.
By Proposition~\ref{prop: bounded+duality}, the bounded complex is a subcomplex of both $\cH(A)$ and $\cH(A^T)$, where $B^T := B_{A^T}$ is just $B$ where the vertices have swapped sides.  
The degree 1 vertices of $B$ and $B^T$ correspond to hyperplanes comprised of a single unbounded sector and hence cannot contribute to the dimension of the bounded complex.
Hence the dimension of the bounded complex is at most $\min\{n-r, d-s\} -1$.
\end{proof}

We note that Theorem \ref{thm: regbound} was established for the case of \emph{chordal} bipartite graphs by Biermann, O'Keefe, and Van Tuyl in \cite{BieOkeVan17}.
An alternative proof can be given via recession graphs: we observe that if $\bddgraph{S}{B}$ is strongly connected then the subgraph $S \subseteq B$ must contain all leaves of $B$.
Moreover, these leaves cannot form their own connected component, and hence removing them does not decrease recession connectivity.
Note that we can repeatedly apply this result to remove certain trees from the graph without decreasing the recession connectivity.

Next we use our tropical tools to recover a regularity result of Herzog and Hibi \cite{herzog2020regularity}.  Recall that $\mat(B)$ denotes the \emph{matching number} of a graph $B$.

\begin{cor}\cite{herzog2020regularity}*{Theorem 1} \label{cor: matching}
 If $B$ is a connected bipartite graph then
\[\reg(\kk[B]) \leq \mat(B) - 1.\]
\end{cor}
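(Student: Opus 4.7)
The plan is to derive the bound as an immediate consequence of the combinatorial characterization of regularity established earlier in the paper, together with the elementary comparison between recession connectivity and the matching number.

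First I would invoke Corollary \ref{cor: graph+regularity}, which asserts that for any connected bipartite graph $B$ we have the exact formula
\[
\reg(\kk[B]) = \lambda(B) - 1,
\]
where $\lambda(B)$ is the recession connectivity of $B$ (Definition \ref{def: recession}). This reduces the problem to showing $\lambda(B) \leq \mat(B)$.

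Next I would appeal to Remark \ref{rem: lambdamatching}, which records precisely this inequality: any subgraph $S \subseteq B$ realizing $\lambda(B)$ has $c(S)$ connected components, and by picking one edge from each component one obtains a matching of size $c(S) = \lambda(B)$ in $B$. Hence $\lambda(B) \leq \mat(B)$.

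Combining these two facts yields
\[
\reg(\kk[B]) = \lambda(B) - 1 \leq \mat(B) - 1,
\]
as desired. There is no main obstacle here since both ingredients have already been established; the novelty lies entirely in the tropical-geometric identification $\reg(\kk[B]) = \lambda(B) - 1$ obtained in Corollary \ref{cor: graph+regularity}, which makes the Herzog--Hibi bound a one-line consequence of the recession graph framework.
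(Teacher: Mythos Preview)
Your proof is correct and follows essentially the same route as the paper: the paper cites Proposition~\ref{prop: bounded+complex+dimension}, Theorem~\ref{thm: tropicalCplxRegularity}, and Remark~\ref{rem: lambdamatching}, while you cite Corollary~\ref{cor: graph+regularity} (which is precisely the combination of the first two) together with Remark~\ref{rem: lambdamatching}. The argument is identical in substance.
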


\begin{proof}
This follows from Proposition \ref{prop: bounded+complex+dimension}, Theorem \ref{thm: tropicalCplxRegularity}, and Remark \ref{rem: lambdamatching}
\end{proof}

We next observe that our bound $\kk[B] = \lambda(B) - 1$ can be arbitrarily stronger than the matching number bound.  For this consider the following families of examples.

\begin{example} \label{ex: path+regularity}
Let $B$ be the path on $2n$ vertices.
As $B$ contains no cycles, the edge ring $\kk[B]$ is just a polynomial ring and has regularity zero.
This can be seen from the recession connectivity, as $\bddgraph{S}{B}$ is strongly connected if and only if $S = B$, and so trivially we have $\lambda(B)=1$.
However, $B$ has a maximal matching of size $n$, and hence the bound in Corollary~\ref{cor: graph+regularity} is as bad as possible.
\end{example}

\begin{example} \label{ex: cycle+chain+regularity}
Let $B$ be the 4-cycles $C_1, \dots, C_k$ with an edge between $C_i$ and $C_{i+1}$ for each $1\leq i \leq k$, as shown in Figure~\ref{fig: cycle+chain+regularity}.
Taking two disjoint edges in each $C_i$ gives a maximal matching of size $2k$, however the recession connectivity of $B$ is $k+1$.
This is as any subgraph $S$ such that $\bddgraph{S}{B}$ is strongly connected must contain all edges between cycles; Figure~\ref{fig: cycle+chain+regularity} gives an example of such a subgraph.

We note that adding a single edge to a graph can have a huge impact on the regularity of the edge ring.
Consider the bipartite graph $B'$ obtained by adding a single edge to $B$ between the cycles $C_1$ and $C_k$, as depicted in Figure~\ref{fig: cycle+chain+regularity}.
$B'$ has the same matching number as $B$, but setting $S$ to be a maximal matching now ensures $\bddgraph{S}{B'}$ is strongly connected, and the bound in Corollary~\ref{cor: graph+regularity} is an equality in this example.
To see this note that $S$ no longer needs to contain the edges between cycles, as one can now walk directly from $C_k$ to $C_1$ without having to backtrack through earlier cycles.
\end{example}

\begin{figure}
    \centering
    \includegraphics{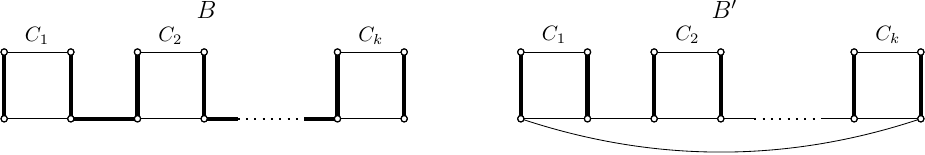}
    \caption{The bipartite graphs $B$ and $B'$ discussed in Example~\ref{ex: cycle+chain+regularity}. Both graphs have matching number $2k$, but very different recession connectivity: $\lambda(B) = k+1$ while $\lambda(B') = 2k$. Examples of subgraphs that realize the recession connectivity are highlighted in bold.}
    \label{fig: cycle+chain+regularity}
\end{figure}

\subsection{Linear resolutions}
We next use our tools to investigate toric edge rings of bipartite graphs that have linear resolutions (see Definition \ref{def: linearRes}).  In \cite{ohsugihibi1999}*{Theorem 4.6} Ohsugi and Hibi prove that a toric edge ring $\kk[G]$ has a 2-linear resolution if and only if $G = K_{2,n}$, in which case $\kk[G]$ is isomorphic to the polynomial ring over a Segre product of $\kk[x_1,x_2]$ and $\kk[x_1, \dots, x_n]$.  In \cite{hibi3linear2019} Hibi, Matsuda, and Tsuchiya prove that if $\kk[G]$ is the edge ring of a finite connected simple graph with a $3$-linear resolution then $\kk[G]$ is a hypersurface. Moreover they conjecture that the same is true for any $\kk[G]$ with a $q$-linear resolution.  In \cite{tsuchiya2021} Tsuchiya establishes this conjecture for the case of finite connected bipartite graphs, using combinatorial and algebraic methods. Very recently Mori, Ohsugi, and Tsuchiya \cite{MorOhsTsu} have established the conjecture in full generality.

\begin{example}
Examples of toric edge rings with $q$-linear resolutions are given by the case of $B = C_{2q}$, cycles of length $2q$.  In this case the toric edge ideal has a single generator (of degree $q$) given by the binomial corresponding to the cycle itself.  The corresponding tropical hyperplane arrangement consists of $q$ tropical hyperplanes, each of which has only two finite coordinates (an example of a \emph{graphical tropical hyperlane arrangement} from Section \ref{sec: dimension}). In this case the bounded complex $\cB(A)$ is a simplex of dimension $q-1$, and hence by Theorem \ref{thm: tropicalCplxRegularity} we have $\reg(R/J_{C_{2q}}) = q-1$.
\end{example}

We are able to use our machinery to provide an alternative proof of these results using the geometry of tropical complexes.  Our first observation relates the $q$-linear resolution property of toric edge rings to the geometry of tropical complexes.

\begin{lemma}\label{lem: twocycles}
Let $B$ be a connected finite simple bipartite graph with no cycles of length $<2q$, and at least two cycles of length $2q$ where $q \geq 3$.
Then $\reg(\kk[B]) \geq q$.
\end{lemma}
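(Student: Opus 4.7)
The plan is to apply Corollary~\ref{cor: graph+regularity}, which equates $\reg(\kk[B])$ with $\lambda(B) - 1$, and to show $\lambda(B) \geq q+1$. By Lemma~\ref{lem: lambda+monotone}, it suffices to exhibit a connected subgraph $F \subseteq B$ with $\lambda(F) \geq q+1$, that is, a subgraph $S \subseteq F$ having at least $q+1$ components such that the recession graph $\bddgraph{S}{F}$ is strongly connected.

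The construction extends the observation (implicit in the calculation of $\lambda(C_{2q}) = q$) that if $M$ is a perfect matching of a $2q$-cycle $C$, then contracting the edges of $M$ in $\bddgraph{M}{C}$ yields a directed $q$-cycle formed by the edges of $C \setminus M$ oriented left-to-right, which is strongly connected. For the two-cycle setting, I would take $F = C_1 \cup C_2$ (augmenting with a shortest $B$-path between the cycles if $C_1 \cup C_2$ is disconnected), pick perfect matchings $M_1$ of $C_1$ and $M_2$ of $C_2$ that agree on as much of $C_1 \cap C_2$ as possible, and set $S = M_1 \cup M_2$. Since the union of two matchings has maximum degree two, each component of $S$ is a path or an even cycle; one then argues using the girth-$\geq 2q$ hypothesis (which limits $|E(C_1) \cap E(C_2)| \leq q$, and also controls $|V(C_1) \cap V(C_2)|$) that the matchings can be chosen so that $S$ is a forest and $c(S) = |V(F)| - |E(S)| \geq q+1$.

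For strong connectivity of $\bddgraph{S}{F}$, I would contract the components of $S$ and note that the edges of $C_i \setminus M_i$ give a directed $q$-cycle on the contracted vertices arising from $V(C_i)$, for $i=1,2$, by the single-cycle analysis. Since $C_1 \neq C_2$, these two directed $q$-cycles either share a contracted vertex (when $V(C_1) \cap V(C_2) \neq \emptyset$) or are joined by a directed arc coming from the connecting path in $F$, and in either case the resulting digraph is strongly connected. The main obstacle is a careful case analysis organized by the overlap type of $C_1$ and $C_2$: vertex-disjoint (with a connecting $B$-path whose internal vertices must be absorbed into $S$-components to avoid isolated sinks in the recession graph), meeting in a single vertex, meeting in a single edge, sharing a longer path, or meeting in several disjoint pieces. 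The tightest case is the theta-like one where $C_1 \cap C_2$ is a path of length exactly $q$: there $c(S) = q+1$ only just meets the bound, and verifying that a compatible choice of matchings aligns on the entire shared path (so that $|M_1 \cap M_2|$ is as large as possible) while still yielding a strongly connected contracted digraph requires the most care.
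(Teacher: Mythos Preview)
Your proposal follows essentially the same approach as the paper: reduce via Corollary~\ref{cor: graph+regularity} and Lemma~\ref{lem: lambda+monotone} to a subgraph $B' = C_1 \cup C_2$ (plus a connecting path if needed), take $S$ to be the union of perfect matchings on the two cycles (together with all path edges in the disjoint case), and verify strong connectivity of $\bddgraph{S}{B'}$ and $c(S)\geq q+1$ by case analysis on the overlap. The paper's case analysis is a bit cleaner because it first uses the girth hypothesis to show that only three overlap types can occur---in particular the ``several disjoint pieces'' case you list is ruled out, since any two shared vertices not joined by shared edges would force a cycle of length $<2q$ unless they are exactly distance $q$ apart---but otherwise the argument and the tight case ($c(S)=q+1$ when the shared path has length $q$) are the same.
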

\begin{proof}
Let $C, C'$ be the two $2q$-cycles, the proof will be split into cases depending on the structure of $C,C'$.
In each case, we shall consider a minimal connected subgraph $B' \subseteq B$ such that $C \cup C' \subseteq B$.
We then build a subgraph $A \subseteq B'$ with at least $q+1$ connected components such that $\bddgraph{A}{B'}$ is strongly connected, showing that $\lambda (B') \geq q+1$ for $q \geq 3$.
As $\lambda(B) \geq \lambda(B')$ by Lemma~\ref{lem: lambda+monotone}, combining with Corollary~\ref{cor: graph+regularity} gives the desired result that $\reg(\kk[B]) \geq q$.

We first distinguish the cases of interest.
Let $\widehat{V} = V(C) \cap V(C')$ be the set of shared vertices and $\widehat{E} = E(C) \cap E(C')$ the set of shared edges.
We claim we have one of the following three cases:
\begin{enumerate}
    \item $|\widehat{V}| \leq 1\, , \,  \widehat{E} = \emptyset$,
    \item $\widehat{V} = \{v, v'\} \, , \, \widehat{E} = \emptyset$ where $v, v'$ are distance $q$ apart,
    \item $\widehat{E}$ is a path of length $|\widehat{V}| - 1 \leq q$.
\end{enumerate}

The proof of this claim comes from the following case analysis.
If $|\widehat{V}| \leq 1$, clearly there are no shared edges.

Suppose that $|\widehat{V}| > 1$, and that there exist $v, v' \in \widehat{V}$ that are not connected by edges in $\widehat{E}$.
Let $P$ and $P'$ be the shortest paths between $v$ and $v'$ in $C$ and $C'$ respectively; the union $P \cup P'$ contains at most $2q$ edges and must contain a cycle as $P \neq P'$.
As $B$ contains no cycles less than length $2q$, this can only happen if $P, P'$ are disjoint and exactly length $q$.
Furthermore, $\widehat{V} = \{v, v'\}$ as all other vertices of $C \cup C'$ are less than distance $q$ from either $v$ or $v'$.

Suppose all vertices $\widehat{V}$ are connected by edges $\widehat{E}$.
As $\widehat{E}$ is given by the intersection of cycles, it must be a path.
Furthermore, if it is of length $> q$, the edges $C \cup C' \setminus \widehat{E}$ form a cycle of length $\leq 2q$, a contradiction.

We now complete the proof by constructing $A \subseteq B' \subseteq B$ such that $\bddgraph{A}{B'}$ is strongly connected and $c(A) \geq q+1$ for each of the cases.
An example for the construction in each case is given in Figure~\ref{fig: cycle_cases}.

\paragraph{\bf Case 1:}
Let $B' \subseteq B$ be the union of $C, C'$ along with a minimal path between them - such a path exists as $B$ is connected.
We let $A$ be the subgraph of $B'$ whose edges form a matching in each cycle, along with all edges in the minimal path between the cycles.
The recession graph $\bddgraph{A}{B'}$ is strongly connected: adding a matching from a cycle to $A$ allows one to move between any vertices in the cycle, and adding the minimal path allows one to move between cycles.
Furthermore, $A$ is a forest with precisely $2q-1$ connected components: $q$ disconnected edges in each cycle, minus one as the path between cycles connects an edge from each cycle.
As a result, $\lambda(B') \geq 2q -1 \geq q+1$ when $q \geq 2$. 

\paragraph{\bf Case 2:}
Let $B' \subseteq B$ be the union $C \cup C'$ and let $A$ consist of a matching from each cycle.
As each cycle is covered by a maximal matching, the recession graph $\bddgraph{A}{B'}$ is strongly connected.
Furthermore, $A$ consists of $2q-2$ connected components, coming from the $2q$ edges that are disconnected everywhere, except at $v,v'$ where the two matchings share a vertex.
As a result, $\lambda(B') \geq 2q -2 \geq q+1$ when $q \geq 3$. 

\paragraph{\bf Case 3:}
Let $B' \subseteq B$ be the union $C \cup C'$.
Let $A$ be the union of maximal matchings in $C,C'$ that agree on $\widehat{E}$, and the matching on $\widehat{E}$ is also maximal.
As each cycle is covered by a maximal matching, the recession graph $\bddgraph{A}{B'}$ is strongly connected.
The number of edges of $A$ is
\[
2q - \left\lceil\frac{|\widehat{E}|}{2}\right\rceil \, .
\]
The only vertices where multiple edges may meet are the endpoints of $\widehat{E}$.
If $|\widehat{E}|$ is odd, the maximal matching on $\widehat{E}$ covers its endpoints, and so all edges of $A$ are disconnected.
If $|\widehat{E}|$ is even, one endpoint of $\widehat{E}$ is not covered by the maximal matching. Hence there are two edges meeting this endpoint, giving one connected component with two edges.
This means the number of connected components of $A$ is
\[
c(A) = \begin{cases} 2q - \left\lceil\frac{|\widehat{E}|}{2}\right\rceil & \widehat{E} \text{ odd} \\ 2q - \left\lceil\frac{|\widehat{E}|}{2}\right\rceil - 1 & \widehat{E} \text{ even} \end{cases}
\]
For $|\widehat{E}| \leq q$ and $q \geq 3$, this is greater than or equal to $q+1$.
This can be checked for small values of $q$, after which the $2q$ term becomes dominant.
\end{proof}
\begin{figure}
    \centering
    \includegraphics[width=\textwidth]{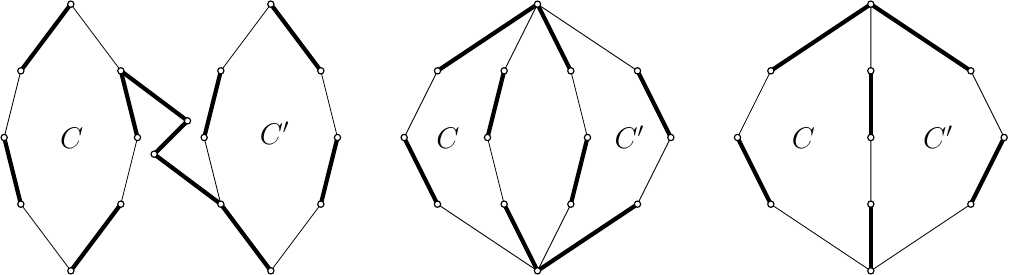}
    \caption{The graph $B'$ depicted each of the three cases.
    The collection of bold edges give the bidirected subgraph $A$.}
    \label{fig: cycle_cases}
\end{figure}

\begin{theorem} \label{thm: linear}
Suppose $B \subset K_{d,n}$ is a finite connected bipartite graph. Then we have
\begin{itemize}
    \item $\kk[B]$ has a $2$-linear resolution if and only if $B$ is obtained by appending (possibly no) trees to the vertices of $K_{2,n}$.  
    
    \item If $q \geq 3$ and  $\kk[B]$ has a $q$-linear resolution then $\kk[B]$ is a hypersurface.
    \end{itemize}
\end{theorem}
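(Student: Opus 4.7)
The plan for both parts is to combine the formula $\reg(\kk[B]) = \lambda(B) - 1$ from Corollary \ref{cor: graph+regularity} with Villarreal's description of $J_B$ in Proposition \ref{prop: villarreal}. Assuming $\kk[B]$ has a $q$-linear resolution forces $\reg(\kk[B]) = q-1$, equivalently $\lambda(B) = q$, and simultaneously forces every minimal closed even cycle of $B$ to have length exactly $2q$ (in particular, $B$ contains no shorter cycles).

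For the second statement, assume $q \geq 3$. If $B$ contained two distinct cycles of length $2q$, Lemma \ref{lem: twocycles} would give $\reg(\kk[B]) \geq q$, contradicting $q$-linearity. Hence $B$ has a unique minimal closed even cycle, which by Proposition \ref{prop: villarreal} provides a single binomial generator of $J_B$, so $\kk[B]$ is a hypersurface.

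For the first statement, the converse direction is direct: when $B$ is obtained from $K_{2,n}$ by attaching trees, the minimal cycles of $B$ are precisely the 4-cycles of the $K_{2,n}$ subgraph, so $J_B$ is generated in degree $2$. To compute the recession connectivity, observe that any subgraph $S \subseteq B$ with $\bddgraph{S}{B}$ strongly connected must contain every leaf-edge of $B$ (otherwise the leaf is not reverse-reachable) and, inductively, all edges of the attached trees; adding these to a strongly connected witness for $K_{2,n}$ merges the tree portions into existing components at their attachment points without increasing the component count, so $\lambda(B) = \lambda(K_{2,n}) = 2$ by Lemma \ref{lem: recbipartite}. Thus $\reg(\kk[B]) = 1$, and together with generation in degree $2$ the resolution is $2$-linear.

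The forward direction for $q = 2$ is the main obstacle, as Lemma \ref{lem: twocycles} breaks down in precisely the configuration realized by $K_{2,n}$. The strategy is to first reduce to the case of minimum degree $\geq 2$ by iteratively peeling off leaves (which preserves both $\lambda(B)$ and the set of minimal cycles), then re-run the case analysis of Lemma \ref{lem: twocycles} for pairs of 4-cycles: Case 1 (disjoint cycles connected by a path) still forces $\lambda(B) \geq 3$, and a direct matching-plus-path witness in the recession graph shows the same for Case 3 when the shared edge-path has length $1$; the only remaining subcase of Case 3 (shared path of length $2$) collapses to Case 2. So any two 4-cycles of $B$ share a pair of opposite vertices. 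The combinatorial crux is then to upgrade this to the existence of a single pair $\{a,b\}$ shared by every 4-cycle of $B$: if three distinct cycles $C_1, C_2, C_3$ had pairwise shared opposite pairs $P_{12} \neq P_{23}$, these two pairs would exhaust the two opposite pairs of $C_2$ (one on each side of the bipartition), and matching the opposite pair of $C_1 \cap C_3$ against the resulting structural constraint forces $C_1$ or $C_3$ to coincide with $C_2$, a contradiction. Once a common pair $\{a,b\}$ is extracted, the minimum-degree hypothesis together with the prohibition of 4-cycles whose opposite pair differs from $\{a,b\}$ collapses the left side of $B$ to $\{a,b\}$ and forces every right-vertex to be a common neighbor of $a$ and $b$, yielding $B = K_{2,n}$.
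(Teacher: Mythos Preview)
Your treatment of the $q\ge 3$ case is correct and matches the paper's: both reduce to Lemma~\ref{lem: twocycles}.  For the converse of the $2$-linear case you use recession connectivity directly (via Lemma~\ref{lem: recbipartite} and the observation that leaf-edges must lie in any strongly-connecting $S$), whereas the paper argues geometrically by realizing $K_{2,n}$ as an arrangement of points on $\troptorus{2}\cong\mathbb R$.  Both are fine.

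The forward direction for $q=2$ is where you genuinely diverge from the paper.  The paper removes all \emph{cut edges} to obtain $B'$, so that every remaining edge lies in a $4$-cycle, and then does a local case analysis on $4$-cycles through edges incident to a fixed $4$-cycle $C$.  You instead pass to the $2$-core (iterated leaf removal) and run a global argument: re-do the case split of Lemma~\ref{lem: twocycles} for pairs of $4$-cycles, and then prove the nice ``common opposite pair'' statement for triples.  That triple argument is correct (the bipartition forces $P_{13}$ to equal one of the two opposite pairs of $C_2$, hence $C_1=C_2$ or $C_3=C_2$), and is a pleasant alternative to the paper's edge-by-edge analysis.

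There is, however, a real gap in your last sentence.  Having a common pair $\{a,b\}$ for all $4$-cycles, together with minimum degree~$\ge 2$, does \emph{not} immediately force $B_0=K_{2,n}$: the $2$-core can still contain bridge edges, and vertices sitting on such bridges are not in any $4$-cycle at all.  (Two disjoint $4$-cycles joined by a path of length $\ge 2$ is a $2$-core with bridges; of course here Case~1 already gives $\lambda\ge 3$, but you have not argued that bridges are impossible in general.)  One way to close the gap: in a chordal bipartite graph every edge lying in some cycle lies in a $4$-cycle, so any edge outside the $K_{2,m}$ spanned by the common neighbours of $a,b$ is a bridge; the bridge-edges form a forest whose leaves (by the degree bound) must lie in $V(K_{2,m})$, and then a tree-path between two such leaves together with a path inside $K_{2,m}$ produces a cycle through a bridge, a contradiction.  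Alternatively, peel cut edges rather than leaves at the outset, as the paper does, and this issue never arises.
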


\begin{proof}
For the first part assume that $B$ is a bipartite graph as described. Note that from Proposition \ref{prop: villarreal} we have $\kk[B] \cong \kk[K_{2,n}]$, so we can in fact assume that $B = K_{2,n}$. Now, to obtain a tropical hyperplane arrangement ${\mathcal H}(A)$ satisfying $B = B_A$ we can take a collection of $d$ $0$-dimensional hyperplanes (points) on the real line $\troptorus{2} \cong {\mathbb R}$.  Then the bounded complex $\cB(A)$ has dimension 1, which implies $\kk[B]$ has a $2$-linear resolution.

Next suppose that $B \subset K_{d,n}$ is a connected bipartite graph such that $\kk[B]$ has a $2$-linear resolution.  
Let $B^\prime$ denote the graph obtained from $B$ by removing all cut edges (or, equivalently, all edges not contained in any cycle). Because $\kk[B] = \kk[B']$, it suffices to show that $B^\prime = K_{2,n}$. 
Since $J_B$ must be generated by quadratics, we know that $B^\prime$ is chordal bipartite with a nonempty edge set. Notice that $B^\prime$ must also be connected, since otherwise the resolution of $k[B']$ would be the tensor product of the resolutions of $k[B_1]$ and $k[B_2]$ and therefore could not be linear.

Assume, seeking contradiction, that $B^\prime$ has at least 3 vertices in each vertex class. Since $B'$ is a nonempty chordal bipartite graph with no cut edges, it must contain a $4$-cycle $C$. Since $B$ is connected, let $e$ be an edge incident to $C$.  Then $e$ itself is contained in another $4$-cycle $C^\prime$ (since again $B$ is chordal bipartite and $B^\prime$ has no cut edges).  This leaves us with three cases, since $C$ and $C^\prime$ can intersect in no edges, a single edge, or two edges. If $C$ and $C^\prime$ intersect in a single edge, then there is a $6$ cycle on $6$ vertices contained in $B^\prime$, so $\lambda(B')$ is at least $3$ (by monotonicity of regularity, Theorem \ref{thm: subgraph}) and therefore $\reg(\kk[B^\prime])$ is at least $2$ by Corollary \ref{cor: graph+regularity}, giving a contradiction. Similarly, if $C$ and $C'$ share no edges, then there is an $8$ cycle in $B'$ on $7$ vertices and a similar argument shows that $\lambda(B')$ is at least $3$ and the regularity is too high. 
The only case remaining is if for any edge $e$ incident to $C$, the only cycles $C'$ containing $e$ share two edges with $C$. In this case $B'$ is exactly $K_{2,n}$, proving the claim.


For the second part we assume $q \geq 3$ and that $B \subset K_{d,n}$ is a connected bipartite graph such $\kk[B]$ has a $q$-linear resolution.  To prove the claim, it suffices to show that if $B$ has no even cycles of length $< 2q$ and has at least two even cycles of length $2q$ then $\reg(\kk[B]) \geq q$.  This follows from Lemma \ref{lem: twocycles}.
\end{proof}

Recall that $J_G$ has a $q$-linear resolution if and only if a square-free initial ideal $\inn(J_G)$ has this property, which is the case exactly when the $(\inn(J_G))^\vee$ (in this case corresponding to the underlying fine cotype ideal) is Cohen-Macaulay.  Hence we have a characterization of which tropical hyperplane arrangements give rise to Cohen-Macaulay cotype ideals.  Note that since type ideals have linear resolutions (see Corollary \ref{cor: linearRes}) this implies that such ideals are in fact \emph{bi-Cohen-Macaulay}.

\section{Further comments and questions}\label{sec: GP}

In this section we address some open questions and directions for future research.  As we have seen, the connection between subdivisions of root polytopes (and the associated theory of tropical hyperplane arrangements and generalized permutohedra) and the study of toric edge ideals have lead to new insights in both contexts.  
We expect that further applications of these ideas can be pursued, and in this section we collect some potential directions.


\subsection{Generalized permutohedra}

Recall from Definition \ref{def: generalizedPermutohedron} that a bipartite graph $B$ gives rise to a generalized permutohedron $P_B$ obtained as a sum of simplices. 
In Corollary \ref{cor: cellResGenPerm} we proved that any regular fine mixed subdivision of $P_B$ supports a minimal cellular resolution of the monomial ideal generated by the lattice points of $P_B$.  
However, not all generalized permutohedra arise as sums of simplices; instead, it is known that any generalized permutohedron can be realized as a Minkowski sum and \emph{difference} of simplices (see for instance \cite{ArdBenDok}).  
It is natural to wonder whether one can extend the algebraic theory we developed here and describe minimal resolutions of the monomial ideal generated by lattice points of generalized permutohedra. A first step in this direction would be to understand our constructions for the case of Minkowski differences. This involves developing a good notion of mixed subdivision in this setting, which is not something we have seen in the literature.

One particularly well-studied class of generalized permutohedra is the class of \emph{matroid} or \textit{matroid basis} polytopes, which are generalized permutohedra whose vertices are $\{0,1\}$-vectors. 
The complex $M$ supports a (non-minimal) cellular resolution of a monomial ideal $I_M$ known as a \emph{matroidal ideal}.
Furthermore, one can check that the Betti numbers of the ideal $I_M$ are M\"obius invariants of certain deletions of $M$ [Alex Fink, private communication]. A natural question that arises is the following.


\begin{question}
For a matroid $M$ can one describe a minimal cellular resolution of the matroidal ideal $I_M$? What about for certain classes of matroids?
\end{question}


\noindent One of the simplest examples of a Minkowski difference is the case of a \emph{trimmed permutohedron},
\[P_B^- =P_B - \Delta_{[d]},\]
\noindent
where $P_B= \sum_k \Delta_{I_k}$ is the generalized permutohedron associated to some bipartite graph $B \subset K_{d,n}$ and where for all $k$ we have $\Delta_{I_k} \subset {\mathbb R}^n$. One can check \cite{postnikov2009permutohedra}*{Theorem 12.9} that for any triangulation $\{\Delta_{T_1}, \dots, \Delta_{T_s}\}$ of the root polytope $Q_B$, the set of shifted left degree vectors $\{\LD(T_1)-1, \dots, \LD(T_s)-1\}$ is exactly the set of  lattice points of $P_B^-$.

\begin{question}
Does a (subdivision of a) trimmed permutohedron $P_B^-$ support a resolution of any naturally occurring ideal?
\end{question}




\subsection{\texorpdfstring{$h$}{h}-vectors}
Stanley famously conjectured that the $h$-vector of a graded Cohen-Macaulay integral domain is log-concave in \cite{stanley1989log}. It is known that Stanley's conjecture is not true in general, but it is still open for special cases. For instance, Conca and Herzog conjectured that the $h$-vector of a ladder determinantal ring is always log concave and Rubey proved this in the special case of $2 \times 2$ ladders \cite{rubey2005h}.  As far as the authors are aware, the question is still open for the case of larger minors and for toric edge ideals of bipartite graphs.

In our situation, we know that the following polynomials coincide:
\begin{itemize}
    \item the $h$-polynomial of the toric edge ideal $J_B$ of a bipartite graph $B$.
    \item the $h$-polynomial of any \textit{initial} ideal $\inn_A J_B$ with respect to any weight matrix $A$ (such as a tropical matrix). 
    \item the $h$-polynomial of the Stanley-Reisner complex of any $\inn_A J_B$, which is a shellable unimodular triangulation of the root polytope $Q_B$.
    \item the Ehrhart polynoimal of the root polytope $Q_B$.
    \item the interior polynomial of either hypergraph of $Q_B$ (see \cite{KalPos}).    
\end{itemize}

Our hope is that our methods can provide some headway in proving Stanley's conjecture in the case of ladder determinantal ideals, or perhaps even for toric edge ideals of all bipartite graphs.

\subsection{Polarizations}
An arrangement ${\mathcal H} = {\mathcal H}(A)$ of $n$ tropical hyperplanes in $\troptorus{d}$ gives rise to both a fine type ideal $I_{T(\cH)} \subset \widetilde S = \kk[x_{11}, \dots, x_{dn}]$ as well as a coarse type ideal $I_{\bft(\cH)} \subset S = \kk[x_1, \dots, x_d]$.  There is a natural map $I_{T(\cH)} \rightarrow I_{\bft(\cH)}$ given by `forgetting the label' on each hyperplane. This map is a flat deformation and is an example of \emph{polarization} of monomial ideals. For example, in the case of hyperplanes with full support, we obtain a family of polarizations of the ideal $\langle x_1, \dots, x_n \rangle ^d$, studied for instance in \cite{almousa2019polarizations}.

On the other hand, 
an axiomatic approach to triangulations of products of simplices (and more generally of root polytopes) was initiated by Galashin, Nenashev, and Postnikov in \cite{galashin2018trianguloids}. Here the notion of a \emph{trianguloid} is defined as an edge colored graph satisfying simple local axioms, and in \cite{galashin2018trianguloids} it is shown that trianguloids are in bijection with triangulations of root polytopes. One can check that requiring a subset of these axioms gives rise to a polarization of the underyling ideal. The connection between trianguloids and polarizations first appeared in the first author's thesis \cite{almousa2021combinatorial} and is further explored by the authors in a follow up paper \cite{ADSpolarizations}.

\subsection{Graphic tropical hyperplane arrangements and \texorpdfstring{$G$}{G}-parking function ideals}

In Subsection \ref{subsec: graphic}, we saw a connection between graphic tropical hyperplane arrangements and classical hyperplane arrangements.
In particular, the tropical complex associated to such an arrangement can be recovered as the subdivision of ${\mathbb R}^{d-1}$ obtained from a perturbation of the underlying graphic arrangement.  As discussed in Section \ref{sec: dimension}, the fine type ideals of these arrangements can be recovered as the oriented matroid ideals of \cite{NPS}.

In \cite{DocSan} it is shown that an affine slice of a graphic hyperplane arrangement associated to $G$ defines a cellular resolution of the underlying \emph{$G$-parking function ideal} $M_G$.  Here $M_G$ is a monomial ideal depending on a graph $G$ and a choice of root vertex $v \in G$, and whose standard monomial ideals corresponding to $G$-parking functions. These ideals were introduced in \cite{postnikovshapiro} and have connections to power ideals and chip-firing on graphs.

One can see that $M_G$ can be obtained as a certain coarse type ideal associated to a graphic tropical hyperplane.  An application of tropical convexity recovers the main result of \cite{DocSan}, and also places the theory in a broader context where, for instance, a choice of root vertex is not necessary.  In particular, the tropical theory allows one to define parking functions for arbitrary root polytopes.  We explore these ideas in \cite{ADSparking}.

\section*{Acknowledgments}
We would like to thank Laura Ballard, Michael Joswig, Suho Oh, Tina O'Keefe, Vic Reiner, Raman Sanyal, and Adam Van Tuyl for helpful conversations. We also thank Akiyoshi Tsuchiya for providing us a positive answer to Question \ref{question: regularity-subgraphs} and Kieran Bhaskara for pointing out an error in earlier version of the statement of Theorem~\ref{thm: regbound}. We are grateful to an anonymous referee for helpful comments. The first author was partially supported by the NSF GRFP under Grant No. DGE-1650441.
The second author was partially supported by Simons Foundation Grant $\#964659$.
The third author was supported by the Heilbronn Institute for Mathematical Research.

\bibliographystyle{amsplain}
\bibliography{biblio}

@article{almousa2019polarizations,
  title={Polarizations of powers of graded maximal ideals},
  author={Almousa, Ayah and Fl{\o}ystad, Gunnar and Lohne, Henning},
  journal={Journal of Pure and Applied Algebra},
  volume={226},
  number={5},
  pages={106924},
  year={2022},
  publisher={Elsevier}
}

@article{develin2004tropical,
  title={Tropical Convexity},
  author={Develin, Mike and Sturmfels, Bernd},
  journal={Documenta Mathematica},
  volume={9},
  pages={1--27},
  year={2004}
}

@article{herzog2020regularity,
  title={The Regularity of Edge Rings and Matching Numbers},
  author={Herzog, J\"{u}rgen and Hibi, Takayuki},
  journal={Mathematics},
  volume={8},
  number={1},
  pages={103--114},
  year={2020},
  publisher={MDPI}
}

@article{block2006tropical,
  title={Tropical convexity via cellular resolutions},
  author={Block, Florian and Yu, Josephine},
  journal={Journal of Algebraic Combinatorics},
  volume={24},
  number={1},
  pages={103--114},
  year={2006},
  publisher={Springer}
}

@article{dochtermann2012tropical,
  title={Tropical types and associated cellular resolutions},
  author={Dochtermann, Anton and Joswig, Michael and Sanyal, Raman},
  journal={Journal of Algebra},
  volume={356},
  number={1},
  pages={304--324},
  year={2012},
  publisher={Elsevier}
}

@article {DocSan,
    AUTHOR = {Dochtermann, Anton and Sanyal, Raman},
     TITLE = {Laplacian ideals, arrangements, and resolutions},
   JOURNAL = {J. Algebraic Combin.},
  FJOURNAL = {Journal of Algebraic Combinatorics. An International Journal},
    VOLUME = {40},
      YEAR = {2014},
    NUMBER = {3},
     PAGES = {805--822},
      ISSN = {0925-9899},
   MRCLASS = {05E40 (05C25 13D02 52C35)},
  MRNUMBER = {3265234},
MRREVIEWER = {Marius T\u{a}rn\u{a}uceanu},
       DOI = {10.1007/s10801-014-0508-7},
       URL = {https://doi-org.libproxy.txstate.edu/10.1007/s10801-014-0508-7},
}

@article{santos2005cayley,
  title={The {C}ayley trick and triangulations of products of simplices},
  author={Santos, Francisco},
  journal={Contemporary Mathematics},
  volume={374},
  pages={151--178},
  year={2005},
  publisher={Providence, RI: American Mathematical Society}
}

@article{fink2015stiefel,
  title={Stiefel tropical linear spaces},
  author={Fink, Alex and Rinc{\'o}n, Felipe},
  journal={Journal of Combinatorial Theory, Series A},
  volume={135},
  pages={291--331},
  year={2015},
  publisher={Elsevier}
}

@article{villarreal1995rees,
  title={Rees algebras of edge ideals},
  author={Villarreal, Rafael H},
  journal={Communications in Algebra},
  volume={23},
  number={9},
  pages={3513--3524},
  year={1995},
  publisher={Taylor \& Francis}
}

@book{sturmfels1996grobner,
  title={Grobner bases and convex polytopes},
  author={Sturmfels, Bernd},
  volume={8},
  year={1996},
  publisher={American Mathematical Soc.}
}

@phdthesis{ballard2021properties,
  title={Properties of the Toric Rings of a Chordal Bipartite Family of Graphs},
  author={Ballard, Laura E},
  year={2020},
  school={Syracuse University}
}

@article{postnikov2009permutohedra,
  title={Permutohedra, associahedra, and beyond},
  author={Postnikov, Alexander},
  journal={International Mathematics Research Notices},
  volume={2009},
  number={6},
  pages={1026--1106},
  year={2009},
  publisher={OUP}
}

@book{San02,
  title={Triangulations of oriented matroids},
  author={Santos, Francisco},
  year={2002},
  publisher={American Mathematical Soc.}
}

@article{MorOhsTsu,
  title={Edge rings with $q$-linear resolutions},
  author={Mori, Kenta and Ohsugi, Hidefumi and Tsuchiya, Akiyoshi},
  journal={Journal of Algebra},
  volume={593},
  pages={550--567},
  year={2022},
  publisher={Elsevier}
}

@article{stanley1989log,
  title={Log-concave and unimodal sequences in algebra, combinatorics, and geometry},
  author={Stanley, Richard P},
  journal={Ann. New York Acad. Sci},
  volume={576},
  number={1},
  pages={500--535},
  year={1989}
}

@article{galashin2018trianguloids,
title = {Trianguloids and triangulations of root polytopes},
journal = {Journal of Combinatorial Theory, Series A},
volume = {201},
pages = {105802},
year = {2024},
issn = {0097-3165},
doi = {https://doi.org/10.1016/j.jcta.2023.105802},
url = {https://www.sciencedirect.com/science/article/pii/S0097316523000705},
author = {Pavel Galashin and Gleb Nenashev and Alexander Postnikov},
}

@article{ADSpolarizations,
  title={Tropical types, trianguloids, and polarizations},
  author={Almousa, Ayah and Dochtermann, Anton and Smith, Ben},
  note={work in progress},
  year={2024}
}

@article{ADSparking,
  title={Tropical types, parking function ideals, and root polytopes},
  author={Almousa, Ayah and Dochtermann, Anton and Smith, Ben},
  note={work in progress},
  year={2024}
}

@article {BieOkeVan17,
    AUTHOR = {Biermann, Jennifer and O'Keefe, Augustine and Van Tuyl, Adam},
     TITLE = {Bounds on the regularity of toric ideals of graphs},
   JOURNAL = {Adv. in Appl. Math.},
  FJOURNAL = {Advances in Applied Mathematics},
    VOLUME = {85},
      YEAR = {2017},
     PAGES = {84--102},
}

@article {corso2009,
    AUTHOR = {Corso, Alberto and Nagel, Uwe},
     TITLE = {Monomial and toric ideals associated to {F}errers graphs},
   JOURNAL = {Trans. Amer. Math. Soc.},
  FJOURNAL = {Transactions of the American Mathematical Society},
    VOLUME = {361},
      YEAR = {2009},
    NUMBER = {3},
     PAGES = {1371--1395},
      ISSN = {0002-9947},
   MRCLASS = {13D02 (05E40 13D40 13H10)},
  MRNUMBER = {2457403},
MRREVIEWER = {Enrique Reyes},
       DOI = {10.1090/S0002-9947-08-04636-9},
       URL = {https://doi-org.libproxy.txstate.edu/10.1090/S0002-9947-08-04636-9},
}

@article {ha2019,
    AUTHOR = {H\`a, Huy T\`ai and Beyarslan, Selvi Kara and O'Keefe, Augustine},
     TITLE = {Algebraic properties of toric rings of graphs},
   JOURNAL = {Comm. Algebra},
  FJOURNAL = {Communications in Algebra},
    VOLUME = {47},
      YEAR = {2019},
    NUMBER = {1},
     PAGES = {1--16},
      ISSN = {0092-7872},
   MRCLASS = {13D02 (13F55 14M25)},
  MRNUMBER = {3924764},
MRREVIEWER = {Dumitru Ioan Stamate},
       DOI = {10.1080/00927872.2018.1439047},
       URL = {https://doi-org.libproxy.txstate.edu/10.1080/00927872.2018.1439047},
}

@article {hibi3linear2019,
    AUTHOR = {Hibi, Takayuki and Matsuda, Kazunori and Tsuchiya, Akiyoshi},
     TITLE = {Edge rings with 3-linear resolutions},
   JOURNAL = {Proc. Amer. Math. Soc.},
  FJOURNAL = {Proceedings of the American Mathematical Society},
    VOLUME = {147},
      YEAR = {2019},
    NUMBER = {8},
     PAGES = {3225--3232},
      ISSN = {0002-9939},
   MRCLASS = {05E40 (13H10 52B20)},
  MRNUMBER = {3981103},
MRREVIEWER = {Rahim Zaare-Nahandi},
       DOI = {10.1090/proc/14382},
       URL = {https://doi-org.libproxy.txstate.edu/10.1090/proc/14382},
}

@article {hochster1972,
    AUTHOR = {Hochster, M.},
     TITLE = {Rings of invariants of tori, {C}ohen-{M}acaulay rings
              generated by monomials, and polytopes},
   JOURNAL = {Ann. of Math. (2)},
  FJOURNAL = {Annals of Mathematics. Second Series},
    VOLUME = {96},
      YEAR = {1972},
     PAGES = {318--337},
      ISSN = {0003-486X},
   MRCLASS = {13H10 (20G05)},
  MRNUMBER = {304376},
MRREVIEWER = {Tadayuki Matsuoka},
       DOI = {10.2307/1970791},
       URL = {https://doi-org.libproxy.txstate.edu/10.2307/1970791},
}

@article {KalPos,
    AUTHOR = {K\'{a}lm\'{a}n, Tam\'{a}s and Postnikov, Alexander},
     TITLE = {Root polytopes, {T}utte polynomials, and a duality theorem for
              bipartite graphs},
   JOURNAL = {Proc. Lond. Math. Soc. (3)},
  FJOURNAL = {Proceedings of the London Mathematical Society. Third Series},
    VOLUME = {114},
      YEAR = {2017},
    NUMBER = {3},
     PAGES = {561--588},
      ISSN = {0024-6115},
   MRCLASS = {05C31 (05C65)},
  MRNUMBER = {3653240},
MRREVIEWER = {Steven J. Tedford},
       DOI = {10.1112/plms.12015},
       URL = {https://doi-org.libproxy.txstate.edu/10.1112/plms.12015},
}

@book {HHOBinomial,
    AUTHOR = {Herzog, J\"{u}rgen and Hibi, Takayuki and Ohsugi, Hidefumi},
     TITLE = {Binomial ideals},
    SERIES = {Graduate Texts in Mathematics},
    VOLUME = {279},
 PUBLISHER = {Springer, Cham},
      YEAR = {2018},
     PAGES = {xix+321},
      ISBN = {978-3-319-95347-2; 978-3-319-95349-6},
   MRCLASS = {13F20 (13-02 13P10 13P25 14M25 52B20)},
  MRNUMBER = {3838370},
MRREVIEWER = {Thomas Kahle},
       DOI = {10.1007/978-3-319-95349-6},
       URL = {https://doi-org.libproxy.txstate.edu/10.1007/978-3-319-95349-6},
}

@article {ohsugihibi1999,
    AUTHOR = {Ohsugi, Hidefumi and Hibi, Takayuki},
     TITLE = {Toric ideals generated by quadratic binomials},
   JOURNAL = {J. Algebra},
  FJOURNAL = {Journal of Algebra},
    VOLUME = {218},
      YEAR = {1999},
    NUMBER = {2},
     PAGES = {509--527},
      ISSN = {0021-8693},
   MRCLASS = {13P10 (52B20)},
  MRNUMBER = {1705794},
MRREVIEWER = {Joseph Gubeladze},
       DOI = {10.1006/jabr.1999.7918},
       URL = {https://doi-org.libproxy.txstate.edu/10.1006/jabr.1999.7918},
}

@article {ohsugihibikoszul1999,
    AUTHOR = {Ohsugi, Hidefumi and Hibi, Takayuki},
     TITLE = {Koszul bipartite graphs},
   JOURNAL = {Adv. in Appl. Math.},
  FJOURNAL = {Advances in Applied Mathematics},
    VOLUME = {22},
      YEAR = {1999},
    NUMBER = {1},
     PAGES = {25--28},
      ISSN = {0196-8858},
   MRCLASS = {05C75},
  MRNUMBER = {1657721},
       DOI = {10.1006/aama.1998.0615},
       URL = {https://doi-org.libproxy.txstate.edu/10.1006/aama.1998.0615},
}

@article {BenVar,
    AUTHOR = {Benedetti, Bruno and Varbaro, Matteo},
     TITLE = {On the dual graphs of {C}ohen-{M}acaulay algebras},
   JOURNAL = {Int. Math. Res. Not. IMRN},
  FJOURNAL = {International Mathematics Research Notices. IMRN},
      YEAR = {2015},
    NUMBER = {17},
     PAGES = {8085--8115},
      ISSN = {1073-7928},
   MRCLASS = {14N20 (05E15)},
  MRNUMBER = {3404010},
MRREVIEWER = {Enrico Carlini},
       DOI = {10.1093/imrn/rnu191},
       URL = {https://doi-org.libproxy.txstate.edu/10.1093/imrn/rnu191},
}

@article {ArdBenDok,
    AUTHOR = {Ardila, Federico and Benedetti, Carolina and Doker, Jeffrey},
     TITLE = {Matroid polytopes and their volumes},
   JOURNAL = {Discrete Comput. Geom.},
  FJOURNAL = {Discrete \& Computational Geometry. An International Journal
              of Mathematics and Computer Science},
    VOLUME = {43},
      YEAR = {2010},
    NUMBER = {4},
     PAGES = {841--854},
      ISSN = {0179-5376},
   MRCLASS = {52B40 (05B35)},
  MRNUMBER = {2610473},
MRREVIEWER = {Sonoko Moriyama},
       DOI = {10.1007/s00454-009-9232-9},
       URL = {https://doi-org.libproxy.txstate.edu/10.1007/s00454-009-9232-9},
}

@article {conca1995,
    AUTHOR = {Conca, Aldo},
     TITLE = {Ladder determinantal rings},
   JOURNAL = {J. Pure Appl. Algebra},
  FJOURNAL = {Journal of Pure and Applied Algebra},
    VOLUME = {98},
      YEAR = {1995},
    NUMBER = {2},
     PAGES = {119--134},
      ISSN = {0022-4049},
   MRCLASS = {13C40 (14M12)},
  MRNUMBER = {1319965},
MRREVIEWER = {Jaime-Luis Garcia-Roig},
       DOI = {10.1016/0022-4049(94)00039-L},
       URL = {https://doi-org.libproxy.txstate.edu/10.1016/0022-4049(94)00039-L},
}

@article {tsuchiya2021,
    AUTHOR = {Tsuchiya, Akiyoshi},
     TITLE = {Edge rings of bipartite graphs with linear resolutions},
   JOURNAL = {J. Algebra Appl.},
  FJOURNAL = {Journal of Algebra and its Applications},
    VOLUME = {20},
      YEAR = {2021},
    NUMBER = {9},
     PAGES = {Paper No. 2150163, 8},
      ISSN = {0219-4988},
   MRCLASS = {05E40 (13H10 52B20)},
  MRNUMBER = {4301168},
       DOI = {10.1142/S0219498821501632},
       URL = {https://doi-org.libproxy.txstate.edu/10.1142/S0219498821501632},
}

@book{MillerSturmfels:2005,
    author    = "Miller, Ezra and Sturmfels, Bernd",
    title     = "Combinatoiral Commutative Algebra",
    year      = "2005",
    publisher = "Springer",
		number   = "227",
    series   = "Graduate Texts in Mathematics",
}

@article{JoswigLoho:2016,
    author = {Joswig, Michael and Loho, Georg},
    title = {Weighted digraphs and tropical cones},
    journal = {Linear Algebra and its Applications},
    volume = {501},
    pages = {304-343},
    year = {2016},
    issn = {0024-3795},
    doi = {https://doi.org/10.1016/j.laa.2016.02.027},
}

@book{JoswigBook22,
  author = {Joswig, Michael},
  title = {Essentials of tropical combinatorics},
  publisher = {American Mathematical Society},
  address = {Providence, RI},
  series = {Graduate Studies in Mathematics},
  year = {2022},
}

@article{NPS,
  title={Syzygies of oriented matroids},
  author={Novik, Isabella and Postnikov, Alexander and Sturmfels, Bernd},
  journal={Duke Mathematical Journal},
  volume={111},
  number={2},
  pages={287--317},
  year={2002},
  publisher={Duke University Press}
}

@article {postnikovshapiro,
    AUTHOR = {Postnikov, Alexander and Shapiro, Boris},
     TITLE = {Trees, parking functions, syzygies, and deformations of
              monomial ideals},
   JOURNAL = {Trans. Amer. Math. Soc.},
  FJOURNAL = {Transactions of the American Mathematical Society},
    VOLUME = {356},
      YEAR = {2004},
    NUMBER = {8},
     PAGES = {3109--3142},
      ISSN = {0002-9947},
   MRCLASS = {05C05 (05A05 13D02)},
  MRNUMBER = {2052943},
MRREVIEWER = {Catherine H. Yan},
       DOI = {10.1090/S0002-9947-04-03547-0},
       URL = {https://doi-org.libproxy.txstate.edu/10.1090/S0002-9947-04-03547-0},
}

@article{conca2020square,
  title={Square-free {G}r{\"o}bner degenerations.},
  author={Conca, Aldo and Varbaro, Matteo},
  journal={Inventiones Mathematicae},
  volume={221},
  number={3},
  year={2020}
}

@article{conca1996gorenstein,
  title={Gorenstein ladder determinantal rings},
  author={Conca, Aldo},
  journal={Journal of the London Mathematical Society},
  volume={54},
  number={3},
  pages={453--474},
  year={1996},
  publisher={Wiley Online Library}
}

@article{gorla2007mixed,
  title={Mixed ladder determinantal varieties from two-sided ladders},
  author={Gorla, Elisa},
  journal={Journal of Pure and Applied Algebra},
  volume={211},
  number={2},
  pages={433--444},
  year={2007},
  publisher={Elsevier}
}

@article{terai1997generalization,
  title={Generalization of {E}agon-{R}einer theorem and h-vectors of graded rings},
  author={Terai, Naoki},
  note={preprint},
  year={1997}
}

@article{eagon1998resolutions,
  title={Resolutions of {S}tanley-{R}eisner rings and {A}lexander duality},
  author={Eagon, John A and Reiner, Victor},
  journal={Journal of Pure and Applied Algebra},
  volume={130},
  number={3},
  pages={265--275},
  year={1998},
  publisher={Elsevier}
}

@article{huber2000cayley,
  title={The {C}ayley Trick, lifting subdivisions and the {B}ohne-{D}ress theorem on zonotopal tilings},
  author={Huber, Birkett and Rambau, J{\"o}rg and Santos, Francisco},
  journal={Journal of the European Mathematical Society},
  volume={2},
  number={2},
  pages={179--198},
  year={2000},
  publisher={European Mathematical Society Publishing House}
}

@article{rajchgot2022castelnuovo,
title = {Castelnuovo-Mumford regularity of ladder determinantal varieties and patches of Grassmannian Schubert varieties},
journal = {Journal of Algebra},
volume = {617},
pages = {160-191},
year = {2023},
issn = {0021-8693},
doi = {https://doi.org/10.1016/j.jalgebra.2022.11.001},
url = {https://www.sciencedirect.com/science/article/pii/S0021869322005063},
author = {Jenna Rajchgot and Colleen Robichaux and Anna Weigandt}
}

@article{corso2008specializations,
  title={Specializations of {F}errers ideals},
  author={Corso, Alberto and Nagel, Uwe},
  journal={Journal of Algebraic Combinatorics},
  volume={28},
  number={3},
  pages={425--437},
  year={2008},
  publisher={Springer}
}

@article {litvinov+maslov+shpiz,
    AUTHOR = {Litvinov, G. L. and Maslov, V. P. and Shpiz, G. B.},
     TITLE = {Idempotent functional analysis. {A}n algebraic approach},
   JOURNAL = {Mat. Zametki},
  FJOURNAL = {Matematicheskie Zametki},
    VOLUME = {69},
      YEAR = {2001},
    NUMBER = {5},
     PAGES = {758--797},
      ISSN = {0025-567X},
   MRCLASS = {46S99 (12K10 16Y60 46N10)},
  MRNUMBER = {1846814},
MRREVIEWER = {Kazimierz G\l azek},
       DOI = {10.1023/A:1010266012029},
       URL = {https://doi.org/10.1023/A:1010266012029},
}

@incollection {cohen+gaubert+quadrat,
    AUTHOR = {Cohen, Guy and Gaubert, St\'{e}phane and Quadrat, Jean-Pierre},
     TITLE = {Duality and separation theorems in idempotent semimodules},
      NOTE = {Tenth Conference of the International Linear Algebra Society},
   JOURNAL = {Linear Algebra Appl.},
  FJOURNAL = {Linear Algebra and its Applications},
    VOLUME = {379},
      YEAR = {2004},
     PAGES = {395--422},
      ISSN = {0024-3795},
   MRCLASS = {46A20 (06F07 46A55 93C65)},
  MRNUMBER = {2039751},
MRREVIEWER = {Bart De Schutter},
       DOI = {10.1016/j.laa.2003.08.010},
       URL = {https://doi.org/10.1016/j.laa.2003.08.010},
}

@book {butkovic,
    AUTHOR = {Butkovi\v{c}, Peter},
     TITLE = {Max-linear systems: theory and algorithms},
    SERIES = {Springer Monographs in Mathematics},
 PUBLISHER = {Springer-Verlag London, Ltd., London},
      YEAR = {2010},
     PAGES = {xviii+272},
      ISBN = {978-1-84996-298-8},
   MRCLASS = {15A80 (90C27 91B02 93B03)},
  MRNUMBER = {2681232},
       DOI = {10.1007/978-1-84996-299-5},
       URL = {https://doi.org/10.1007/978-1-84996-299-5},
}

@incollection{polymake:2000,
    AUTHOR = {Gawrilow, Ewgenij and Joswig, Michael},
     TITLE = {{\texttt polymake}: a framework for analyzing convex polytopes},
 BOOKTITLE = {Polytopes---combinatorics and computation ({O}berwolfach, 1997)},
    SERIES = {DMV Sem.},
    VOLUME = {29},
     PAGES = {43--73},
 PUBLISHER = {Birkh\"auser, Basel},
      YEAR = {2000},
   MRCLASS = {52B55 (68U05)},
  MRNUMBER = {1785292},
}

@article{rubey2005h,
  title={The h-vector of a ladder determinantal ring cogenerated by 2$\times$2 minors is log-concave},
  author={Rubey, Martin},
  journal={Journal of Algebra},
  volume={292},
  number={2},
  pages={303--323},
  year={2005},
  publisher={Elsevier}
}

@phdthesis{almousa2021combinatorial,
  author       = {Ayah Almousa}, 
  title        = {Combinatorial Characterizations of Polarizations of Powers of the Graded Maximal Ideal},
  school       = {Cornell University},
  year         = 2021,
}

@article{stanley1993,
  title={A monotonicity property of $h$-vectors and $h^*$-vectors},
  author={Richard P. Stanley},
  journal={European Journal of Combinatorics},
  volume={14},
  pages={251--258},
  year={1993},
}
\addcontentsline{toc}{section}{Bibliography}

\end{document}